\documentclass[10pt]{amsart}
\usepackage{graphicx}
\usepackage{amscd}
\usepackage{amsmath}
\usepackage{amsthm}
\usepackage{amsfonts}
\usepackage{amssymb}
\usepackage{mathrsfs}
\usepackage{bm}
\usepackage{enumerate}
\usepackage{amsrefs}
\usepackage{xcolor}
\usepackage[colorlinks, citecolor=blue, linkcolor=red, pdfstartview=FitB]{hyperref}
\usepackage[latin1]{inputenc}

\usepackage{tikz}
\usepackage{tikz-cd}
\usepackage{caption}
\usetikzlibrary{decorations.markings}
\usepackage{lipsum}
\usepackage{mathrsfs}

\usepackage{marginnote}	
\usepackage{soul} 			


\newcommand{\bC}{{\mathbb{C}}}
\newcommand{\bD}{{\mathbb{D}}}

\newcommand{\bM}{{\mathbb{M}}}

\newcommand{\bT}{{\mathbb{T}}}


\renewcommand{\O}{{\mathcal{O}}}

  \newcommand{\R}{{\mathcal{R}}}



\newcommand{\rC}{\mathrm{C}}


\renewcommand{\phi}{\varphi}

\newcommand{\upchi}{{\raise.35ex\hbox{$\chi$}}}



\newcommand{\ol}{\overline}

\newcommand{\mycomment}[1]{}


\newcommand{\qand}{\quad\text{and}\quad}


\newcommand{\ran}{\operatorname{ran}}

\newcommand{\supp}{\operatorname{supp}}

\newcommand{\Ann}{\operatorname{Ann}}

\newtheorem{lemma}{Lemma}[section]
\newtheorem{theorem}[lemma]{Theorem}
\newtheorem*{theorem*}{Theorem}
\newtheorem{proposition}[lemma]{Proposition}
\newtheorem{corollary}[lemma]{Corollary}
\newtheorem{theoremx}{Theorem}

\theoremstyle{definition}

\date{\today}

\author{Rapha\"el Clou\^atre}
\address{Department of Mathematics, University of Manitoba, Winnipeg, Manitoba, Canada R3T 2N2}
\email{raphael.clouatre@umanitoba.ca\vspace{-2ex}}

\author{Poornendu Kumar}
\email{Poornendu.Kumar@umanitoba.ca \vspace{-2ex}}

\thanks{R.C. was partially supported by an NSERC Discovery Grant. P.K. was partially supported by a PIMS postdoctoral fellowship.}

\keywords{Bidisc, distinguished varieties, spectral sets, annihilating ideal}

\subjclass[2010]{47A25,47A13,14M12}

\title[Agler--McCarthy spectral varieties in the bidisc]{Annihilating ideals and Agler--McCarthy spectral varieties in the bidisc}
\begin{document}
\begin{abstract}
The closed unit bidisc $\ol{\bD}^2$ is known to be a spectral set for any pair $(T_1,T_2)$ of commuting contractions. When each $T_i$ is pure and has finite defect, the pair  admits a much smaller spectral set: the closure of a distinguished variety $V$ inside the bidisc $\bD^2$. We find conditions on $(T_1,T_2)$ that guarantee that  the closure of $V$ is a \emph{minimal} spectral set. In addition, we examine the relationship between $V$ and  the annihilating ideal $\Ann(T_1,T_2)$ in $H^\infty(\bD^2)$. While $V$ is typically strictly larger than the zero set of $\Ann(T_1,T_2)$, we isolate a natural constrained  isometric co-extension $(S_1,S_2)$ of $(T_1,T_2)$ whose Taylor spectrum is contained in $V$ and is closely linked to the so-called support of $\Ann(T_1,T_2)$.  We also characterize when $\Ann(T_1,T_2)$ is the ideal of functions vanishing on the joint point spectrum of $(S_1^*,S_2^*)$. 
\end{abstract}
\maketitle

\section{Introduction}

Let $T_1,T_2$ be commuting bounded linear operators on some Hilbert space $H$. We  denote the  Taylor spectrum  of the pair  by $\sigma(T_1,T_2)$. 
Let $X\subset \bC^2$ be a compact subset containing $\sigma(T_1,T_2)$, and let $\rC(X)$ denote the $\rC^*$-algebra of continuous complex-valued functions on $X$.  Let $\R(X)\subset \rC(X)$ denote the subalgebra of restrictions to $X$ of rational functions with poles outside of $X$.  We also let $\O(X)$ denote the space of functions that are holomorphic on a neighbourhood of $X$. The Taylor functional calculus \cite[Theorem 30.7]{muller2007} (see also the survey \cite{Cur}) then gives a well-defined homomorphism
\[
\O(X)\to B(H)
\]
\[
f\mapsto f(T_1,T_2), \quad f\in \O(X).
\]
We say that $X$ is a \emph{spectral set} for $(T_1,T_2)$ if this homomorphism is contractive on $\R(X)$. When this homomorphism is completely contractive on $\R(X)$ (i.e. contractive on every matrix level $\bM_n(\R(X))$) then we say that $X$ is a \emph{complete spectral set} for $(T_1,T_2)$.

Because they lie at the interface of function theory and operator theory, spectral sets have long been of interest; see  \cite{paulsen2002} and \cite[Chapter 37]{handbookLA2014} and the references therein. They still remain relevant to this day, being at the heart of the Crouzeix conjecture for instance \cite{crouzeix2004},\cite{crouzeix2007}.

It is easy to show that any spectral set contains another spectral set that is minimal with respect to inclusion. The question of identifying such a minimal spectral set, however, is more delicate. In the case of a jointly subnormal pair, the Taylor spectrum is a spectral set \cite{putinar1984} contained in every other spectral set, so it is clearly minimal. Beyond the subnormal case, fairly little is known regarding minimal spectral sets. In the single-variable case, the most definitive result in this direction was obtained a long time ago by Williams \cite{williams1967} (the notion of a spectral set for a single operator is defined completely analogously to the notion defined above for pairs). On the other hand, the multivariate case does not seem to have garnered much attention. 

Suppose now that $T_1$ and $T_2$ both  have norm at most $1$. Let $\bD\subset \bC$ denote the open unit disc. It is well known that $\ol{\bD^2}$ is a spectral set for $(T_1,T_2)$, by virtue of the classical inequality of Ando: rational functions with poles off $\ol{\bD^2}$ are uniform limits of polynomials. Interestingly, in many cases a much smaller set than $\ol{\bD^2}$ suffices.

Let $E$ be a finite-dimensional Hilbert space and let $\Psi:\bD \to B(E)$ be a contractive holomorphic function. We put
\[
V_\Psi=\{(z,w)\in \bD^2:\det(\Psi(z)-wI)=0\}.
\]
We say that $V_\Psi$ is an \emph{Agler--McCarthy variety} for the pair $(T_1,T_2)$ if 
\begin{equation}\label{Eq:AM}
\|p(T_1,T_2)\|\leq \sup_{V_\Psi} |p| \quad \text{ for every polynomial } p.
\end{equation}
When both $T_1$ and $T_2$ are pure contractions with finite defects, there is a rational matrix-valued pure inner function $\Psi$ such that $V_\Psi$ is an Agler--McCarthy variety for $(T_1,T_2)$.  This was first proved for pairs of matrices in a seminal paper of Agler--McCarthy \cite[Theorem 3.1]{AM2005}, and later extended to the infinite-dimensional setting  in \cite[Theorem 4.3]{DS2017}. Some of these details appear at the beginning of Section \ref{S:specsynth}. Importantly, it follows from  \cite[Theorem 1.12]{AM2005} (see also \cite{BKS2022, knese2010}) that $\Psi$ is a rational matrix-valued pure inner function precisely when $V_\Psi$ is a so-called  \emph{distinguished} variety, in the sense that $V_\Psi\cap \bD^2$ is non-empty and $\ol{V_\Psi}$ exits $\bD^2$ through its distinguished boundary. 

The study of distinguished varieties goes back at least to Rudin's 1969 work \cite{RudinTAMS}, and was followed by the landmark contributions of Agler--McCarthy \cite{AM2005}. Since then, these varieties have garnered significant interest, due to their deep connections with several branches of mathematics. For example, distinguished varieties arise naturally as uniqueness sets for solvable Nevanlinna-Pick interpolation problems on $\mathbb{D}^2$, and they play an important role in strengthening Ando's inequality, as discussed above. In addition, building on Fedorov's work on unramified point-separating pairs of inner functions, Vegulla \cite[Theorem 3.4.4]{VegullaThesis} proved that every planar domain bounded by piecewise analytic curves is conformally equivalent to a distinguished variety, thereby providing a unified framework for understanding the function theory of multi-connected domains. More recently, \cite{GWZ} established a connection between distinguished varieties and the theory of essentially normal quotient modules, particularly in relation to the Arveson--Douglas conjecture. Furthermore, these varieties also appear in generalizations of Wermer's maximality theorem on the bidisc \cite[Theorem 1.2]{SWInventiones}.

Although it is not immediately clear from \eqref{Eq:AM}, we shall see below in Theorem \ref{T:AMratspectral} that if $V_\Psi$ is an Agler--McCarthy distinguished variety for a pair $(T_1,T_2)$, then $\ol{V_\Psi}$  is in fact a spectral set for $(T_1,T_2)$.  In particular, this means that $\ol{\bD^2}$ is \emph{not} a minimal spectral set for such pairs.  The first goal of the paper is to find conditions on $(T_1,T_2)$ under which Agler--McCarthy varieties are in fact minimal spectral sets. We undertake this task in Section \ref{S:minspec}. 

As a first step, we take  another look at the univariate setting. Given a nice domain $G\subset \bC$ such that $\ol{G}$ is a spectral set for a single operator, we extend results from  \cite{williams1967} and find a sufficient condition on the functional calculus for minimality of $\ol{G}$ as a spectral set, provided that the spectrum of the operator does not meet the boundary of $G$ (see Theorem \ref{T:Williams}). In Theorem \ref{T:ratShilov}, we show how this spectral assumption can be recast in $\rC^*$-algebraic terms, using ideas from noncommutative Choquet theory. As an application, we show in Corollary \ref{C:Cuntz} that there is a contraction $T$ generating the Cuntz $\rC^*$-algebra $\O_d$ (for $d\geq 2$) for which $\ol{\bD}$ is a minimal spectral set. 

Turning back to the bivariate case, we obtain our first main result (Theorem \ref{T:min2var}), which we summarize below.  We denote by $H^\infty(\bD)$ the Banach algebra of bounded holomorphic functions on $\bD$.

\begin{theoremx}\label{T:A}
Let $T_1,T_2\in B(H)$ be commuting contractions. Let $V\subset \bD^2$ be an Agler--McCarthy distinguished variety for $(T_1,T_2)$. 
Assume that
\begin{enumerate}[{\rm (i)}]
\item the spectrum of $T_1$ is contained in $\bD$, and
\item there are non-constant functions $\phi_1\in H^\infty(\bD)$ and $\phi_2\in \R(\ol{\bD})$ such that $1=\|\phi_1\|_{\bD}=\|\phi_2\|_{\bD}=\|\phi_1(T_1)\phi_2(T_2)\|$.
\end{enumerate}
Then $\ol{V}$ is a minimal spectral set for $(T_1,T_2)$.
\end{theoremx}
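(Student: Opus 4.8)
The plan is to argue by contradiction. By Theorem~\ref{T:AMratspectral}, $\ol V$ is itself a spectral set, so the Taylor functional calculus supplies a contractive unital homomorphism $\rho\colon\R(\ol V)\to B(H)$, $\rho(f)=f(T_1,T_2)$. Any compact spectral set $X$ with $\sigma(T_1,T_2)\subseteq X\subseteq\ol V$ carries a contractive homomorphism on the (larger) algebra $\R(X)$ restricting to $\rho$, so minimality of $\ol V$ is equivalent to the statement that no proper closed subset of $\ol V$ admits such a homomorphism. I would thus assume $X\subsetneq\ol V$ is a spectral set and aim for a contradiction, the whole difficulty being to extract this from hypotheses (i) and (ii).

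The first step is to manufacture a test function out of (ii). Set $F(z,w)=\phi_1(z)\phi_2(w)$. Since $\sigma(T_1)\subset\bD$ by (i), $\phi_1$ is holomorphic on a neighbourhood of $\sigma(T_1)$, so $\phi_1(T_1)$ is well defined by the holomorphic functional calculus and is the norm limit of $p_n(T_1)$ for polynomials $p_n\to\phi_1$ uniformly near $\sigma(T_1)$; consequently $p_n\otimes\phi_2\in\R(\ol{\bD^2})$ and $\rho(p_n\otimes\phi_2)=p_n(T_1)\phi_2(T_2)\to\phi_1(T_1)\phi_2(T_2)$ in norm. Because $\|\phi_1\|_{\bD}=\|\phi_2\|_{\bD}=1$ and $\ol V\subseteq\ol{\bD^2}$ we have $\sup_{\ol V}|F|\le 1$, while (ii) gives $\|\rho(F)\|=\|\phi_1(T_1)\phi_2(T_2)\|=1$; the spectral inequality for $\ol V$ then forces $\sup_{\ol V}|F|=1$. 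By the maximum principle $|\phi_1|<1$ throughout $\bD$, so $|F|$ can only approach $1$ along the distinguished boundary $\ol V\cap\bT^2$, which for a distinguished variety is precisely the Shilov boundary of $\R(\ol V)$.

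Now applying the homomorphism on $\R(X)$ to the same approximants gives $1=\|\phi_1(T_1)\phi_2(T_2)\|\le\sup_X|F|$, whence $\sup_X|F|=1$ and, $X$ being closed, $X$ must contain the full boundary peak set of $|F|$. To promote this local information to the global conclusion $X=\ol V$ I would combine two ingredients. First, any spectral set contains $\sigma(T_1,T_2)$, which pins down the interior behaviour. Second---and this is where the univariate result enters---I would use (i) to fibre the picture over the first coordinate: as $z$ ranges over a neighbourhood of $\sigma(T_1)$ inside $\bD$, the corresponding slices of $\ol V$ are governed by the single-variable functional calculus of $T_2$, and the norm attainment $\|\phi_2(T_2)\|=1$ (which follows from $\|\phi_1(T_1)\phi_2(T_2)\|=1$ together with $\|\phi_i(T_i)\|\le 1$) feeds into Theorem~\ref{T:Williams} to certify that these slices cannot be shrunk. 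Propagating this across the fibres, together with analytic continuation along $V$ and the density of $V$ in $\ol V$, should force $X$ to exhaust $\ol V$.

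The main obstacle is precisely this last propagation: condition (ii) supplies only a single norm equality, whereas minimality demands that \emph{every} point of $\ol V$ be essential. Bridging that gap requires showing that the boundary peak set of $F$, the Taylor spectrum, and the analytic structure of the distinguished variety jointly generate $\ol V$ as the smallest spectral set, and it is here that the $H^\infty$-versus-rational bookkeeping---clean in the first variable thanks to (i), but delicate in the second, where $\sigma(T_2)$ may meet $\bT$---must be handled with care. I expect the cleanest route is to reduce to the univariate Theorem~\ref{T:Williams} by slicing over the first coordinate and then to globalise using the irreducibility of $\ol V$, rather than attempting to force the omitted points one at a time.
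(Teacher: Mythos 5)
Your opening steps (reduce to a proper closed $X\subsetneq\ol{V}$, derive $\|\phi_1(T_1)\|=\|\phi_2(T_2)\|=1$ via polynomial approximation of $\phi_1$ using hypothesis (i) and von Neumann's inequality, and extract a point of $X$ where $|\phi_1\phi_2|$ peaks at $1$) do match the paper's proof of Theorem \ref{T:min2var}. But the proposal is not a proof: you explicitly leave the ``propagation'' step open, and that step is where the entire argument lives. Worse, the route you sketch for closing it cannot work as described. You propose to slice $\ol{V}$ over the first coordinate and feed the norm attainment $\|\phi_2(T_2)\|=1$ into Theorem \ref{T:Williams} on the slices; but Theorem \ref{T:Williams} requires the spectrum of the operator in question to lie inside the open domain, and hypothesis (i) gives this only for $T_1$ --- $\sigma(T_2)$ may well meet $\bT$, as you yourself note. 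In addition, no global exhaustion of $\ol{V}$ is needed: since $V$ is dense in $\ol{V}$ and any closed superset of a spectral set is spectral, one may assume $X=\ol{V}\setminus D$ for a single small polydisc $D$ of radius $\delta$ centred at some $(\alpha,\beta)\in V$. So the perceived obstacle that ``every point of $\ol{V}$ must be shown essential'' dissolves, and a single peak point suffices; your assertion that $X$ ``must contain the full boundary peak set of $|F|$'' is both unnecessary and false as stated (the spectral inequality only yields one point $(\lambda,\mu)\in X$ with $|\phi_1(\lambda)\phi_2(\mu)|=1$).

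The paper's actual endgame, which your sketch is missing, is a short two-step pincer with the variables playing the opposite roles from your plan. First, since $X=\ol{V}\setminus D$ is spectral for the pair, the set $X_1=\ol{\bD}\setminus B$ (with $B$ the disc of radius $\delta$ about $\alpha$) is a spectral set for $T_1$; Theorem \ref{T:Williams} --- applied once, to $T_1$, legitimately because of (i) and $\|\phi_1(T_1)\|=\|\phi_1\|_{\bD}=1$ --- then forces $\|\phi_1(T_1)\|>\|\phi_1\|_{X_1}$, so the peak coordinate $\lambda$ must lie in $B\subset\bD$. Second, because $V$ is distinguished, $\ol{V}$ meets $\partial(\bD^2)$ only in $\bT^2$, so $(\lambda,\mu)\in\ol{V}$ with $\lambda\in\bD$ forces $\mu\in\bD$; since $\phi_2\in\R(\ol{\bD})$ is non-constant with $\|\phi_2\|_{\bD}=1$, the maximum modulus principle gives $|\phi_2(\mu)|<1$, contradicting $|\phi_2(\mu)|=1$. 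This is exactly why the theorem asks $\phi_2$ to be rational (hence continuous on $\ol{\bD}$) while $\phi_1$ may be merely $H^\infty$: Williams localizes $\lambda$ in the first variable, and the second variable is handled purely by the geometry of distinguished varieties plus the maximum principle. Your proposed tools for the gap --- analytic continuation along $V$ and ``irreducibility of $\ol{V}$'' --- play no role and would not be available in general, since a distinguished variety $V_\Psi$ need not be irreducible (its defining determinant may factor).
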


As far as we know, outside the subnormal setting this is the first instance of a result providing sufficient conditions for a spectral set to be minimal for a \emph{pair} of commuting operators. In Corollary \ref{C:min2varisom}, we exhibit a situation where condition (ii) above can be easily verified.

Our next objective is to identify an Agler--McCarthy  variety (or a portion thereof) in terms of data uniquely determined by $T$. Henceforth, $T_1$ and $T_2$ will be assumed to be pure and to have finite defects. 
In particular, both $T_1$ and the pair $(T_1,T_2)$ admit a weak-$*$ continuous $H^\infty$-functional calculus (see \eqref{Eq:functcalcT} below). We let
\[
\Ann(T_1)=\{f\in H^\infty(\bD):f(T)=0\} \qand
\]
\[
\Ann(T_1,T_2)=\{f\in H^\infty(\bD^2):f(T_1,T_2)=0\}.
\]
We let $Z(\Ann(T_1,T_2))\subset \bD^2$ be the zero set of $\Ann(T_1,T_2)$. Following \cite{AM2005} and \cite{DS2017}, in Section \ref{S:specsynth} we recall the construction of a rational matrix-valued pure inner function $\Psi$ such that $V_\Psi$ is an Agler--McCarthy distinguished variety for $(T_1,T_2)$.

We begin our investigation with the observation that
	\[
	Z(\Ann(T_1,T_2)) \subset V_\Psi
	\]
	(see \eqref{Eq:ZannV}). Easy examples reveal that the inclusion is strict in general. It follows from the construction of $\Psi$  that $V_\Psi$ is  the point spectrum of some isometric co-extension of the pair $(T_1,T_2)$. This suggests that there may be a more suitable isometric co-extension that more accurately reflects the algebraic properties of $(T_1,T_2)$. Indeed, we construct the \emph{constrained isometric co-extension} $(S_1^\Psi, S_2^\Psi)$ of $(T_1,T_2)$, acting on the space $K_\Psi$ (see \eqref{Eq:S}). We then show that the set $\Omega_\Psi$, defined as the complex conjugate of the joint point spectrum of the pair $(S_1^{\Psi*}, S_2^{\Psi*})$, coincides with $Z(\Ann(T_1,T_2))$ (Theorem \ref{T:ZAnn}). We refine this result in Theorem \ref{T:supp}, which includes additional boundary information and describes the so-called support of $\Ann(T_1,T_2)$  \cite{CT2023} in terms of $V_\Psi$ and the full Taylor spectrum of $(S^\Psi_1,S^\Psi_2)$. We summarize these findings in the following.

\begin{theoremx}\label{T:C}
We have 
\[
\sigma(S^\Psi_1,S^\Psi_2)\subset \supp(\Ann(T_1,T_2))\subset \ol{V_\Psi}
\]
and
\[
\sigma(S^\Psi_1,S^\Psi_2)\cap \bD^2=\supp(\Ann(T_1,T_2))\cap \bD^2=Z(\Ann(T_1,T_2)).
\]
\end{theoremx}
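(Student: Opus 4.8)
The plan is to prove the two displayed chains by analyzing the constrained isometric co-extension $(S_1^\Psi,S_2^\Psi)$ on $K_\Psi$ together with the inner function $\Psi$. The key input from the construction in Section \ref{S:specsynth} should be that $V_\Psi$ is exactly the joint point spectrum of the \emph{unconstrained} isometric co-extension (a dilation into a vector-valued Hardy space), while $(S_1^\Psi,S_2^\Psi)$ is its compression to the model space $K_\Psi$. First I would establish the two right-hand inclusions. To get $\sigma(S_1^\Psi,S_2^\Psi)\subset\ol{V_\Psi}$, I expect to use that $S_i^\Psi$ are constrained operators satisfying the same polynomial/inner relations dictated by $\Psi$, so that any function vanishing on $\ol{V_\Psi}$ (in an appropriate rational or $H^\infty$ sense) annihilates the pair; combined with the spectral mapping features of the Taylor functional calculus this forces the Taylor spectrum into $\ol{V_\Psi}$. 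The inclusion $\supp(\Ann(T_1,T_2))\subset\ol{V_\Psi}$ should follow directly from $Z(\Ann(T_1,T_2))\subset V_\Psi$ (recorded as \eqref{Eq:ZannV}) after passing to closures and invoking the definition of $\supp$ from \cite{CT2023}, since the support is built from the zero set together with its boundary behaviour.

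Next I would handle the interior identities, which are the heart of the theorem. The inclusion $\Omega_\Psi=Z(\Ann(T_1,T_2))$ is precisely Theorem \ref{T:ZAnn}, so I can cite it: $Z(\Ann(T_1,T_2))$ equals the complex conjugate of the joint point spectrum of $(S_1^{\Psi*},S_2^{\Psi*})$. The remaining work is to translate the joint point spectrum of the adjoint pair into the intersection of the Taylor spectrum of $(S_1^\Psi,S_2^\Psi)$ with $\bD^2$. For a point $(z,w)\in\bD^2$ lying off the boundary, I would argue that membership in the Taylor spectrum is equivalent to the existence of a joint eigenvector for $(S_1^{\Psi*},S_2^{\Psi*})$ at the conjugate point: the forward direction uses that at interior points the Koszul complex fails to be exact only through a genuine kernel (the pair of pure contractions $(S_1^\Psi,S_2^\Psi)$ behaves like a model whose interior spectrum is detected by point evaluations via reproducing kernels on $K_\Psi$), and the reverse direction is automatic since an eigenvalue of the adjoint always lies in the Taylor spectrum. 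This yields $\sigma(S_1^\Psi,S_2^\Psi)\cap\bD^2=\Omega_\Psi=Z(\Ann(T_1,T_2))$.

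Finally I would pin down the middle term $\supp(\Ann(T_1,T_2))\cap\bD^2$. Here I expect the inclusion $Z(\Ann(T_1,T_2))\subset\supp(\Ann(T_1,T_2))\cap\bD^2$ to be immediate from the definition of support, while the reverse inclusion should follow from the outer bound $\supp(\Ann(T_1,T_2))\subset\ol{V_\Psi}$ together with the fact that, inside $\bD^2$, the support cannot exceed the zero set: any interior support point must be a limit of zeros of annihilating functions and hence, by the closedness of $Z(\Ann(T_1,T_2))$ in $\bD^2$ under the relevant topology, itself a zero. Combining these three interior computations gives the second displayed chain with all three sets equal on $\bD^2$.

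The hard part will be the interior spectral computation in the second paragraph, namely showing that at points of $\bD^2$ the full Taylor spectrum of the constrained pair collapses to the conjugate joint point spectrum of the adjoints. The subtlety is that Taylor-spectrum membership is a statement about exactness of the Koszul complex, which in general is strictly weaker than the existence of eigenvectors; making the equivalence work will require exploiting the specific model structure of $K_\Psi$ and the purity and finite-defect hypotheses on $(T_1,T_2)$, presumably via a Fredholm/index argument or an explicit reproducing-kernel description that identifies interior Taylor-spectrum points with points where a kernel-type function is a joint eigenvector.
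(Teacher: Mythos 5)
Your proposal has two genuine gaps, and the second one is fatal to the route you chose. First, the inclusion $\supp(\Ann(T_1,T_2))\subset \ol{V_\Psi}$ does \emph{not} follow from $Z(\Ann(T_1,T_2))\subset V_\Psi$ ``after passing to closures'': the support of an ideal in $H^\infty$ is in general strictly larger than the closure of its zero set (already in one variable, a singular inner function generates an ideal with empty zero set but nonempty support, since $1=\theta g+(z-\zeta)h$ is unsolvable at boundary points carrying the singular measure). What saves the day in the paper is Theorem \ref{T:distvarspec}(iii): there is a \emph{polynomial} $p\in\Ann(T_1,T_2)$ whose zero set in $\ol{\bD}^2$ is exactly $\ol{V_\Psi}$; since $p$ is continuous up to the boundary, at any $(\lambda,\mu)\in\ol{\bD}^2\setminus\ol{V_\Psi}$ one writes $p-p(\lambda,\mu)=(z-\lambda)q_1+(w-\mu)q_2$ with polynomial $q_1,q_2$ and concludes $1\in \Ann(T_1,T_2)+(z-\lambda)H^\infty(\bD^2)+(w-\mu)H^\infty(\bD^2)$. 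Your heuristic that the support is ``the zero set together with its boundary behaviour'' is precisely what must be controlled, and only the polynomial annihilator controls it.

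Second, and more seriously, the step you yourself flag as ``the hard part'' --- showing that every interior Taylor-spectrum point of $(S^\Psi_1,S^\Psi_2)$ produces a joint eigenvector of the adjoints --- is left entirely unproved, and the Fredholm/Koszul strategy you gesture at is not how this can reasonably be done (Koszul non-exactness is in general strictly weaker than existence of eigenvectors, as you note, and no index argument is supplied). The paper never proves this implication directly; it closes a circle of inclusions instead. The key device is Lemma \ref{L:suppTaylor}: for any pair admitting an $H^\infty(\bD^2)$-functional calculus, $\sigma(T_1,T_2)\subset\supp(\Ann(T_1,T_2))$, proved by applying the functional calculus to a relation $1=\phi+(z-\lambda)f_1+(w-\mu)f_2$ and invoking the splitting characterization of the complement of the Taylor spectrum \cite[Proposition 25.3]{muller2007}. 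Combining this (applied to $(S^\Psi_1,S^\Psi_2)$) with $\Ann(S^\Psi_1,S^\Psi_2)=\Ann(T_1,T_2)$ (Lemma \ref{L:AnnS}), with $\supp(F)\cap\bD^2=Z(F)$ (Lemma \ref{L:supp}, whose proof is the elementary difference-quotient argument, not your ``limit of zeros'' heuristic), with $Z(\Ann(T_1,T_2))=\Omega_\Psi$ (Theorem \ref{T:ZAnn}), and with the easy inclusion $\Omega_\Psi\subset\sigma(S^\Psi_1,S^\Psi_2)$, one gets
\[
\sigma(S^\Psi_1,S^\Psi_2)\cap\bD^2\subset\supp(\Ann(T_1,T_2))\cap\bD^2=Z(\Ann(T_1,T_2))=\Omega_\Psi\subset\sigma(S^\Psi_1,S^\Psi_2)\cap\bD^2,
\]
so all three sets coincide without any direct analysis of the Koszul complex. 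You should restructure your argument around this sandwich: the support is not merely a bookkeeping device for the boundary, it is the mechanism that converts Taylor-spectrum membership into zero-set membership in the interior.
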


This result extends to the bidisc some relationships between the support of the annihilating ideal and the spectrum that are known to hold in the disc and the ball \cite{bercovici1988},\cite{CT2023}.

The next natural question is to relate $\Ann(T_1,T_2)$ with the ideal $I(\Omega_\Psi)$ of functions in $H^\infty(\bD^2)$ vanishing on $\Omega_\Psi$. Our second main result gives a complete answer under a mild assumption on the pair $(T_1,T_2)$ (see Theorem \ref{T:specsynth}).

\begin{theoremx}\label{T:B}
 Assume that $\Ann(T_1)$ is non-zero. Then, the following statements are equivalent.
\begin{enumerate}[{\rm (i)}]
\item The joint eigenvectors of $S_1^{\Psi *}$ and $S_2^{\Psi *}$ span a dense subspace in $K_\Psi$.
\item $\Ann(T_1,T_2)=I(Z(\Ann(T_1,T_2))=I(\Omega_\Psi)$.
\item $\Ann(T_1)$ is a radical ideal in $H^\infty(\bD)$.
\item $\Ann(T_1)$ is generated by a Blaschke product with simple roots.
\end{enumerate}
\end{theoremx}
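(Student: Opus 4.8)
The plan is to reduce everything to the single inner function generating $\Ann(T_1)$. Since $T_1$ is a pure contraction of finite defect with a weak-$*$ continuous functional calculus, $\Ann(T_1)$ is a weak-$*$ closed ideal of $H^\infty(\bD)$, hence of the form $\theta H^\infty(\bD)$ for a single non-constant inner function $\theta$ (this is where the hypothesis $\Ann(T_1)\neq\{0\}$ enters). I would first dispose of the purely function-theoretic equivalence (iii)$\Leftrightarrow$(iv): writing $\theta=B S_\mu$ with $B$ a Blaschke product and $S_\mu$ singular, a repeated Blaschke factor or a non-trivial singular factor produces an $f\in H^\infty(\bD)$ with $f^2\in\theta H^\infty(\bD)$ but $f\notin\theta H^\infty(\bD)$ (drop one copy of the repeated factor, resp.\ take $f=B S_{\mu/2}$), so $\theta H^\infty(\bD)$ is not radical; conversely, an inner--outer factorization shows that if $\theta$ is a Blaschke product with simple zeros then $\theta\mid g$ whenever $g$ vanishes on the zeros of $\theta$, so the ideal is radical. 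The same computation records the key identity that $\theta H^\infty(\bD)=\{\phi\in H^\infty(\bD):\phi|_{Z(\theta)}=0\}$ holds \emph{precisely} when $\theta$ is a simple-zero Blaschke product, and this will be the pivot of the whole argument.

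With (iii)$\Leftrightarrow$(iv) in hand I would close the cycle (i)$\Rightarrow$(ii)$\Rightarrow$(iii)$\Rightarrow$(i). For (i)$\Rightarrow$(ii), the inclusion $\Ann(T_1,T_2)\subseteq I(Z(\Ann(T_1,T_2)))=I(\Omega_\Psi)$ always holds, the equality of the two $I$'s being Theorem \ref{T:ZAnn}. For the reverse inclusion, take $f\in I(\Omega_\Psi)$ and any joint eigenvector $v$ of $(S_1^{\Psi*},S_2^{\Psi*})$ with eigenvalue $(\bar z_0,\bar w_0)$, where $(z_0,w_0)\in\Omega_\Psi$; then $f(S_1^\Psi,S_2^\Psi)^*v=\overline{f(z_0,w_0)}\,v=0$, so $f(S_1^\Psi,S_2^\Psi)^*$ annihilates every joint eigenvector. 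Under (i) these span $K_\Psi$, whence $f(S_1^\Psi,S_2^\Psi)=0$, and since $(S_1^\Psi,S_2^\Psi)$ co-extends $(T_1,T_2)$, compressing to $H$ gives $f(T_1,T_2)=0$, i.e.\ $f\in\Ann(T_1,T_2)$.

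For (ii)$\Rightarrow$(iii) I would slice to the first variable via the embedding $\iota:H^\infty(\bD)\to H^\infty(\bD^2)$, $\iota(\phi)(z,w)=\phi(z)$, for which $\iota(\phi)(T_1,T_2)=\phi(T_1)$, so $\iota^{-1}(\Ann(T_1,T_2))=\Ann(T_1)=\theta H^\infty(\bD)$. On the other hand $\iota^{-1}(I(Z(\Ann(T_1,T_2))))=\{\phi:\phi|_{\pi_1(Z(\Ann(T_1,T_2)))}=0\}$, and I claim $\pi_1(Z(\Ann(T_1,T_2)))=Z(\theta)$: the inclusion $\subseteq$ holds because $\iota(\theta)=\theta(z)\in\Ann(T_1,T_2)$, while $\supseteq$ follows since for each $a\in Z(\theta)$ the operator $S_1^{\Psi*}$ has eigenvalue $\bar a$, and the compression of $S_2^{\Psi*}$ to the corresponding finite-dimensional eigenspace has an eigenvalue $\bar w$, producing a joint eigenvector and hence a point $(a,w)\in\Omega_\Psi=Z(\Ann(T_1,T_2))$. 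Granting (ii), the two pullback ideals coincide, so $\theta H^\infty(\bD)=\{\phi:\phi|_{Z(\theta)}=0\}$, which by the first paragraph forces $\theta$ to be a simple-zero Blaschke product, i.e.\ (iv), hence (iii).

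The remaining implication (iii)$\Rightarrow$(i) is where I expect the real work to lie, and it is the step that must genuinely invoke the explicit model of Section \ref{S:specsynth}. Assuming $\theta$ is a Blaschke product with simple zeros $\{a_k\}$, I would decompose $K_\Psi$ along the eigenspaces of $S_1^{\Psi*}$ at the eigenvalues $\{\bar a_k\}$ and show the joint eigenvectors already fill each such eigenspace. The delicate point is that completeness of the eigenvectors of $S_1^{\Psi*}$ alone (which simple zeros of $\theta$ do give, exactly as in the scalar model-space picture) is not obviously enough: within each fiber one must rule out genuine Jordan blocks for the compression of $S_2^{\Psi*}$, i.e.\ show that a simple zero of $\theta$ at $a_k$ forces the fiber operator built from $\Psi(a_k)$ to be diagonalizable. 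I anticipate that the main obstacle is precisely this passage from the \emph{scalar} multiplicity data encoded in $\theta$ to the \emph{matricial} Jordan data encoded in $\Psi$; resolving it should hinge on the way $\theta$ is extracted from the characteristic data of $(T_1,T_2)$ in the construction of $(S_1^\Psi,S_2^\Psi)$, showing that any non-trivial Jordan block in a fiber would force a repeated zero of $\theta$ and thereby contradict (iii).
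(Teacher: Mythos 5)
Your implications (i)$\Rightarrow$(ii), (ii)$\Rightarrow$(iii) and (iii)$\Leftrightarrow$(iv) are essentially correct. The first coincides with the paper's argument (the eigenvector computation you gloss is Lemma \ref{L:eigenvector} plus \eqref{Eq:reprod}). Your (iii)$\Leftrightarrow$(iv) is a self-contained substitute for the paper's citation of \cite{clouatre2015CB}. Your (ii)$\Rightarrow$(iii) is longer than the paper's (which simply observes that if $F^n$ vanishes on $\Omega_\Psi$ then so does $F$, so radicality is immediate from (ii)), but it is valid; in fact your lifting of each $a\in Z(\theta)$ to a point of $\Omega_\Psi$ --- restricting $S_2^{\Psi*}$ to the finite-dimensional $S_2^{\Psi*}$-invariant eigenspace $\ker(S_1^{\Psi*}-\ol{a})=\{k_a\otimes \xi:\Theta(a)^*\xi=0\}$, which is non-zero because $\det\Theta$ generates $\Ann(S_1^\Psi)=\Ann(T_1)$ by Corollary \ref{C:detK} and Lemma \ref{L:AnnS} --- is arguably cleaner than the paper's own proof of the same fact (Theorem \ref{T:ZAnn1}), which runs a spectral-mapping and net-compactness argument over finite subsets of the annihilator.

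The genuine gap is (iii)$\Rightarrow$(i): you give only a plan, and you yourself flag its central obstacle as unresolved, so the cycle of implications is broken at the theorem's hardest step. Moreover, the fiberwise strategy you sketch is aimed at the wrong difficulty. At a simple zero $a$ of $\det\Theta$ the kernel of $\Theta(a)^*$ is automatically one-dimensional (a kernel of dimension $\geq 2$ would force a zero of $\det\Theta$ of order $\geq 2$), so the eigenspace of $S_1^{\Psi*}$ at $\ol{a}$ is one-dimensional and carries a joint eigenvector with no diagonalizability or Jordan-block issue to confront. The true difficulty is \emph{completeness}: showing that these joint eigenvectors span all of $K_\Psi$, i.e.\ that $K_\Psi$ contains no root vectors beyond eigenvectors, and your outline offers no mechanism for this. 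The paper settles it without any fiber decomposition, by comparing Beurling--Lax--Halmos representations. Let $L\subset K_\Psi$ be the closed span of the joint eigenvectors; it is co-invariant for $M_z\otimes I$, so $L=(\ran M_\Gamma)^\perp$ for an inner $\Gamma$, which is square by Proposition \ref{P:equaldefect} (applied to a non-zero element of $\Ann(T_1)$ annihilating the compression to $L$), while $K_\Psi=(\ran M_\Theta)^\perp$ with $\det\Theta$ the simple Blaschke generator of $\Ann(T_1)$. Theorem \ref{T:ZAnn1} plants a joint eigenvector $k_\lambda\otimes\xi\in L$ over every zero $\lambda$ of $\det\Theta$, forcing $\det\Gamma(\lambda)=0$; simplicity of the zeros upgrades this pointwise vanishing to the divisibility $\det\Theta \mid \det\Gamma$. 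In the other direction, $L\subset K_\Psi$ gives $\ran M_\Theta\subset \ran M_\Gamma$, hence $\Theta=\Gamma\Delta$ by \eqref{Eq:Leech} and $\det\Gamma\mid\det\Theta$. Consequently $\det\Delta$ is invertible in $H^\infty(\bD)$, so $M_\Delta$ is invertible by the adjugate formula, and $\ran M_\Gamma=\ran M_\Theta$, i.e.\ $L=K_\Psi$. This determinant comparison is precisely the passage from the scalar data $\theta$ to the matricial structure that you anticipated would be the obstacle; without it (or an equivalent completeness argument) your proposal does not prove the theorem.
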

Notably, this result connects the spectral synthesis problem for a pair of commuting operators (see  \cite{BaranovBelov} and the references therein) to  the algebraic structure of the annihilators.

\section{Minimal spectral sets in one and two variables}\label{S:minspec}
\subsection{The univariate case}
Here and throughout, $B(H)$ denotes the $\rC^*$-algebra of bounded linear operators on a Hilbert space $H$. We start  by recordings some well-known properties regarding the Riesz--Dunford holomorphic functional calculus and spectral sets.

\begin{lemma}\label{L:homom}
Let $T\in B(H)$ and let $A=\{f(T):f\in \O(\sigma(T))\}\subset B(H)$. Let $\pi:A\to B(K)$ be a unital contractive homomorphism. Then, the following statements hold.
\begin{enumerate}[{\rm (i)}]
\item  Let $f\in\O(\sigma(T))$. Then, $\pi(f(T))=f(\pi(T))$.
\item A spectral set for $T$ is also a spectral set for $\pi(T)$.
\end{enumerate}
\end{lemma}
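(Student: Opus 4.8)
The plan is to deduce both statements from the spectral inclusion $\sigma(\pi(T))\subseteq \sigma(T)$ together with the Riesz--Dunford integral representation of the functional calculus. Throughout I will use that $A=\{f(T):f\in\O(\sigma(T))\}$ is a unital (not necessarily closed) subalgebra of $B(H)$, since the holomorphic functional calculus is an algebra homomorphism; in particular $I\in A$, $T\in A$, and all resolvents of $T$ lie in $A$.

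First I would establish $\sigma(\pi(T))\subseteq \sigma(T)$. Fix $\lambda\notin \sigma(T)$. The function $g_\lambda(z)=(z-\lambda)^{-1}$ is holomorphic on a neighbourhood of $\sigma(T)$, so $g_\lambda\in \O(\sigma(T))$ and $g_\lambda(T)=(T-\lambda I)^{-1}\in A$. Because $\pi$ is a unital homomorphism, applying it to the identities $(T-\lambda I)g_\lambda(T)=g_\lambda(T)(T-\lambda I)=I$ shows that $\pi(T)-\lambda I$ is invertible in $B(K)$ with inverse $\pi(g_\lambda(T))$. Hence $\lambda\notin \sigma(\pi(T))$, giving the desired inclusion. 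In particular, every $f\in \O(\sigma(T))$ is holomorphic on a neighbourhood of $\sigma(\pi(T))$, so $f(\pi(T))$ is well defined, and moreover $\pi\bigl((\lambda I-T)^{-1}\bigr)=(\lambda I-\pi(T))^{-1}$ for all $\lambda\notin\sigma(T)$.

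For part (i), fix $f\in \O(\sigma(T))$ and choose a contour $\Gamma$ surrounding $\sigma(T)$ inside the domain of holomorphy of $f$; by the inclusion above, $\Gamma$ also surrounds $\sigma(\pi(T))$. The Riesz--Dunford formula gives $f(T)=\frac{1}{2\pi i}\int_\Gamma f(\lambda)(\lambda I-T)^{-1}\,d\lambda$, and this integral is a norm limit of Riemann sums, each of which is a finite linear combination of resolvents $(\lambda_k I-T)^{-1}\in A$. Since $\pi$ is contractive, hence continuous on $A$, I can pass $\pi$ through the integral: using the resolvent identity from the previous paragraph, the image of each Riemann sum converges in $B(K)$ to $\frac{1}{2\pi i}\int_\Gamma f(\lambda)(\lambda I-\pi(T))^{-1}\,d\lambda=f(\pi(T))$. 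Comparing limits yields $\pi(f(T))=f(\pi(T))$.

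Part (ii) then follows immediately. Let $X$ be a spectral set for $T$; in particular $\sigma(T)\subseteq X$, so $\sigma(\pi(T))\subseteq X$ as well. For any rational function $r$ with poles off $X$ we have $r\in \O(\sigma(T))$, so $r(T)\in A$, and part (i) gives $r(\pi(T))=\pi(r(T))$. Contractivity of $\pi$ together with the spectral-set hypothesis for $T$ then yield $\|r(\pi(T))\|=\|\pi(r(T))\|\le \|r(T)\|\le \sup_X|r|$, so $X$ is a spectral set for $\pi(T)$. The only delicate point is the interchange of $\pi$ with the Riesz--Dunford integral in part (i); this is harmless because the integral is a norm-convergent limit of Riemann sums lying in $A$ and $\pi$ is bounded, but one must verify that a single contour serves both $T$ and $\pi(T)$, which is exactly what the spectral inclusion guarantees.
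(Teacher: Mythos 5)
Your proof is correct, but it takes a somewhat different route from the paper's. The paper first verifies $\pi(f(T))=f(\pi(T))$ for $f\in\R(\sigma(T))$ (where it follows from the homomorphism property applied to polynomials and inverses) and then extends to all of $\O(\sigma(T))$ by citing Runge's approximation theorem together with the continuity of the Riesz--Dunford functional calculus; part (ii) is then the same three-term norm estimate you give. You instead prove the spectral inclusion $\sigma(\pi(T))\subseteq\sigma(T)$ explicitly via the resolvent identity $\pi\bigl((\lambda I-T)^{-1}\bigr)=(\lambda I-\pi(T))^{-1}$, and then push $\pi$ through the Dunford integral by Riemann sums, using contractivity of $\pi$ only for continuity. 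Both arguments hinge on the same core fact --- that $\pi$ intertwines resolvents --- but your version has the merit of being self-contained (no appeal to Runge density or a cited continuity theorem) and of making explicit the spectral inclusion, which the paper's proof uses tacitly: it is needed both so that $f(\pi(T))$ is defined for every $f\in\O(\sigma(T))$ and so that $\sigma(\pi(T))\subseteq X$ in part (ii), a point you rightly flag. The paper's route buys brevity by outsourcing the limiting step; yours buys transparency at the cost of the (routine but necessary) verification that one admissible contour for $T$ serves for $\pi(T)$, which you handle correctly.
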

\begin{proof}
(i) The equality is easily verified when $f\in \R(\sigma(T))$. The general case follows from Runge's approximation theorem along with the continuity property of the Riesz--Dunford functional calculus \cite[Theorem I.37]{muller2007}.

(ii) Let $X\subset \bC$ be a closed set that is spectral for $T$. For $f\in \R(X)$, we may apply (i) to see that
\[
\|f(\pi(T))\|=\|\pi(f(T))\|\leq \|f(T)\|\leq \|f\|_X.
\]
\end{proof}

Next, we give a criterion for minimality of spectral sets. This extends a result of Williams \cite[Theorem 9]{williams1967} past the setting where $H$ is finite-dimensional (or where $T$ is compact). Our argument largely follows the scheme of  proof thereof.

\begin{theorem}\label{T:Williams} Let $G\subset \bC$ be a bounded, connected, open set with boundary consisting of finitely many rectifiable simple closed curves. Let $T\in B(H)$ be an operator admitting $\ol{G}$ as spectral set.
Assume that
\begin{enumerate}[{\rm (i)}]
\item $\sigma(T)$ is contained in $G$, and
\item there is a non-constant $\phi\in H^\infty(G)$ such that $\|\phi(T)\|=\|\phi\|_G=1$.
\end{enumerate}
If $\|\phi(T)\|\leq \|\phi\|_X$ for some proper closed subset $X\subset \ol{G}$, then $X$ is not a spectral set for $T$. In particular, $\ol{G}$ is a minimal spectral set for $T$.
\end{theorem}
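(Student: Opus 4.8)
The plan is to prove the displayed implication first and then read off minimality from it. Suppose, for a contradiction, that some proper closed $X\subset\ol G$ satisfies $\|\phi(T)\|\le\|\phi\|_X$ and is nonetheless a spectral set for $T$. Since a spectral set contains the spectrum, $\sigma(T)\subset X$, while (i) places $\sigma(T)$ in the interior $G$. As $\phi$ is non-constant, the maximum modulus principle gives $|\phi|<1$ on $G$ and, through its boundary values on the rectifiable curves forming $\partial G$, $\|\phi\|_{\ol G}=\|\phi\|_G=1$; hence $\|\phi\|_X=\|\phi(T)\|=1$, the functional-calculus norm is attained at the top level, and the peak set $F=\{z:|\phi(z)|=1\}$ is contained in $\partial G$.

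For the engine I would follow Williams' scheme, replacing eigenvectors by approximate maximizers. Choose unit vectors $x_n$ with $\|\phi(T)x_n\|\to1$ and set $y_n=\phi(T)x_n/\|\phi(T)x_n\|$, so that $\langle\phi(T)x_n,y_n\rangle\to1$. If $X$ is spectral, each functional $f\mapsto\langle f(T)x_n,y_n\rangle$ is contractive on $\R(X)\subset\rC(X)$; by Hahn--Banach and the Riesz representation theorem it is realized by a measure $\mu_n$ on $X$ with $\|\mu_n\|\le1$ and $\int f\,d\mu_n=\langle f(T)x_n,y_n\rangle$ for all $f\in\R(X)$. Hypothesis (i) now enters decisively: because $\sigma(T)$ is a compact subset of $G$, the Riesz--Dunford calculus together with Runge's theorem approximates $\phi$ uniformly near $\sigma(T)$ by rational functions in $\R(\ol G)\subset\R(X)$, and I would use this to pass to the limit and obtain $\int\phi\,d\mu_n\to1$. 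A weak-$*$ cluster point $\mu$ of $(\mu_n)$ then satisfies $\int\phi\,d\mu=1=\|\mu\|$ with $|\phi|\le1$ on $X$, which forces $|\mu|$ to be carried by $F\cap X\subset\partial G$. Exploiting the properness of $X$ together with this concentration, I would then manufacture a single rational function $g\in\R(X)$ with $\|g(T)\|>\|g\|_X$, contradicting spectrality of $X$.

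The crux --- and the step I expect to be hardest --- is precisely the passage $\int\phi\,d\mu_n\to1$ and its quantitative use in the last sentence above. Because $\phi$ lies only in $H^\infty(G)$ and need not extend continuously to $\partial G$, one cannot identify $\lim_n\langle\phi(T)x_n,y_n\rangle$ with the integral of $\phi$ against a measure living on the boundary part of $X$ by naive uniform approximation: the rational approximants are controlled only near $\sigma(T)$, not on the portion of $X$ meeting $\partial G$. This has to be justified through the boundary-value theory of $H^\infty(G)$ and the mutual absolute continuity of harmonic measure and arclength on the rectifiable curves $\partial G$, which is exactly where the smoothness hypothesis on the boundary is needed.

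Finally, minimality of $\ol G$ follows formally from the implication. Suppose a proper closed $X\subset\ol G$ were a spectral set for $T$. Enlarging it to $X'=X\cup F$ keeps it closed and, since $X\subset X'$ gives $\sigma(T)\subset X'$ and $\R(X')\subset\R(X)$ with $\|f\|_{X'}\ge\|f\|_X\ge\|f(T)\|$, keeps it a spectral set. Moreover $X'$ is still proper: as $X\not\supset G$ (otherwise its closure would be all of $\ol G$), there is an interior point $p\in G\setminus X$, and $p\notin F\subset\partial G$, so $p\notin X'$. By construction $\|\phi\|_{X'}=1=\|\phi(T)\|$, so the implication applies to $X'$ and shows it is not spectral --- a contradiction. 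Hence no proper closed subset of $\ol G$ is a spectral set for $T$, and $\ol G$ is minimal.
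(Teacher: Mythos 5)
Your outline correctly flags where the difficulty lies, but the two steps you defer are precisely the content of the theorem, and as set up they do not go through. First, the passage $\int\phi\,d\mu_n\to 1$: the measures $\mu_n$ produced by Hahn--Banach are \emph{arbitrary} representing measures on $X$ --- nothing forces them to be absolutely continuous with respect to arclength or harmonic measure --- while your rational approximants converge to $\phi$ only uniformly near $\sigma(T)$, with $\|r_k\|_X$ completely uncontrolled. So neither uniform approximation nor the boundary-value theory you invoke lets you identify $\lim_k\int r_k\,d\mu_n$ with an integral of the (merely a.e.-defined) boundary values of $\phi$ against $\mu_n$: mutual absolute continuity of harmonic measure and arclength is a statement about $\phi$, not about $\mu_n$. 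Second, the closing step ``manufacture $g\in\R(X)$ with $\|g(T)\|>\|g\|_X$'' is a placeholder for the entire engine. The paper's proof avoids both problems. It first uses $\rC^*$-algebra theory to pass to an \emph{irreducible} representation $\pi$ and unit vectors $\xi,\eta$ with $\langle\phi(\pi(T))\xi,\eta\rangle=1$ \emph{exactly} (no approximate maximizers, no representing measures on $X$ at all); it then encodes this vector functional by the Cauchy transform $\omega(z)=\langle\pi(T-zI)^{-1}\xi,\eta\rangle$ and the contour identity $\int_{\partial X}g\,\omega\,dz=\langle g(\pi(T))\xi,\eta\rangle$ for all $g\in H^\infty(X^\circ)$, valid because $\sigma(\pi(T))$ sits compactly inside $X^\circ$. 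Spectrality of $X$ then makes $\phi$ an exact solution of the extremal problem $\sup\{\int_{\partial X}g\,\omega\,dz:\|g\|=1\}$; Havinson's duality produces $h\in E^1(X^\circ)$ with $(1-|\phi|)\,|\omega-h|=0$ a.e.\ on $\partial X$; the maximum principle gives $|\phi|<1$ on the boundary of the removed ball $B$, forcing $\omega=h$ a.e.\ there; and the Riesz uniqueness theorem for $E^1$ propagates this equality to a contour $\partial U$ enclosing $\sigma(\pi(T))$, where $\int_{\partial U}\phi h\,dz=0$ while $\int_{\partial U}\phi\omega\,dz=1$ --- the contradiction. None of this duality/uniqueness mechanism is present in your plan, and your measure-theoretic setup does not produce a substitute for it (note also that with approximate maximizers $\phi$ would only be asymptotically extremal, which is not enough to trigger the duality argument).

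There is an independent flaw in your minimality paragraph: it relies on the peak set $F=\{z:|\phi(z)|=1\}$, but $\phi\in H^\infty(G)$ need not extend continuously to $\partial G$; its boundary values exist only almost everywhere, and the set where they have modulus $1$ can be empty even though $\|\phi\|_G=1$ (the supremum need not be attained). So $X'=X\cup F$ is neither well defined as a closed set nor guaranteed to satisfy $\|\phi\|_{X'}\geq 1$. The correct reduction, used in the paper, goes the other way: given a proper closed spectral $X$, enlarge it to $\ol{G}\setminus B$ for a small open ball $B\subset G$ with $\ol{B}$ disjoint from $X$ and from the spectrum (such a ball exists because the relatively open set $\ol{G}\setminus X$ must contain interior points of $G$). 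Enlargement preserves spectrality, and the maximum principle gives $\sup_{G\setminus\ol{B}}|\phi|=\|\phi\|_G=1$, so the hypothesis $\|\phi(T)\|\leq\|\phi\|_X$ holds automatically for the enlarged set and the main implication applies.
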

\begin{proof}
By basic $\rC^*$-algebra theory, there is an irreducible $*$-representation $\pi:B(H)\to B(K)$ along with unit vectors $\xi,\eta\in K$ such that 
\begin{equation}\label{Eq:extphi}
1=\|\phi\|_G=\|\phi(T)\|=\langle \pi(\phi(T))\xi,\eta\rangle=\langle \phi(\pi(T))\xi,\eta\rangle
\end{equation}
where the last equality follows from Lemma \ref{L:homom}. 

Let $X\subset \ol{G}$ be a proper closed subset for which $\|\phi(T)\|\leq \|\phi\|_X$. Since $\pi$ is contractive, we find
$1=\|\phi(\pi(T))\|= \|\phi\|_X$. By another application of Lemma \ref{L:homom}, we see that it suffices to show that $X$ cannot be a  spectral set for $\pi(T)$. We may assume that  $\sigma(\pi(T))\subset X$, for otherwise $X$ plainly cannot be a spectral set for $\pi(T)$. Furthermore, note that $X$ does not contain $G$, so without loss of generality, we may assume that $X=\ol{G}\setminus B$, where $B\subset G$ is an open ball with closure disjoint from $\sigma(\pi(T))$.

Define the function $\omega:\bC\setminus \sigma(\pi(T))\to \bC$ as
\[
\omega(z)=\langle \pi(T-zI)^{-1}\xi,\eta\rangle, \quad z\in \bC\setminus \sigma(T).
\]
Let $X^\circ$ denote the interior of $X$, and consider the Hardy space $E^1(X^\circ)$ (see \cite[Chapter 10]{duren1970}).
For $g\in H^\infty(X^\circ)$, note that
\[
 \int_{\partial X} g(z) \omega(z) dz=\langle g(\pi(T))\xi,\eta\rangle
\]
by properties of the Riesz--Dunford functional calculus. Assuming towards a contradiction that $X$ is a spectral set for $\pi(T)$, this implies that 
\[
\left|  \int_{\partial X} g(z) \omega(z) dz\right|\leq 1, \quad g\in H^\infty(X^\circ) \quad \text{with } \|g\|= 1
\]
so by \eqref{Eq:extphi} the function $\phi$ is a solution to the extremal problem
\[
\sup\left\{  \int_{\partial X} g(z) \omega(z) dz: g\in H^\infty(X^\circ), \|g\|=1\right\}.
\]
By a duality result of Havinson \cite{havinson1957} analogous to \cite[Section 8.1]{duren1970}, there is $h\in E^1(X^\circ)$ such that
 \[
 \int_{\partial X}\phi(z)(\omega(z)-h(z))dz=1=\int_{\partial X}|\omega(z)-h(z)|d|z|.
 \]
 Therefore $\int_{\partial X}|\phi(z)(\omega(z)-h(z))|d|z|=1$ and
 \begin{equation}\label{Eq:phi}
  (1-|\phi|)|\omega-h|=0 \quad \text{ almost everywhere on } \partial X
 \end{equation}
 with respect to arc length measure (see  \cite[Theorem 5]{williams1967}). Observe that $|\phi|<1$ on $\partial B$ by the maximum modulus principle, since $\phi$ is non-constant on $G$. Thus, $\omega|_{\partial B}=h|_{\partial B}$ almost everywhere by \eqref{Eq:phi}. 
 
 Next, by compactness of $\sigma(T)$, there is a set $U\subset G\setminus \ol{B}$ containing $\sigma(\pi(T))$ such that $U$ is a finite union of open balls. Consider $Y=X\setminus U$. Then, both $\omega|_{Y^\circ}$ and $h|_{Y^\circ}$ belong to $E^1(Y^\circ)$. Since these functions agree almost everywhere on $\partial B$, the Riesz theorem (see \cite[Theorem 10.3]{duren1970} along with the discussion at the bottom of page 182) implies that they must agree almost everywhere on $\partial U$ as well. But this is impossible: on one hand we have
$
\int_{\partial U}\phi(z)h(z)dz=0
$ since $h\phi$ is holomorphic in  $U$, while on the other hand 
\[
\int_{\partial U}\phi(z)\omega(z)dz=\langle \phi(\pi(T))\xi,\eta\rangle=1
\]
because $\sigma(\pi(T))\subset U$.
\end{proof}

Next, we will show how condition (i) above can be encoded as a $\rC^*$-algebraic property of $T$. For this purpose, we require some machinery from Arveson's theory of the noncommutative Choquet boundary \cite{arveson1969}.  

Let $A\subset B(H)$ be a unital subalgebra. A unital $*$-representation $\chi:\rC^*(A)\to \bC$ is said to be a \emph{boundary character} for $A$ if, given a unital contractive linear map $\psi:\rC^*(A)\to \bC$ satisfying $\chi|_A=\psi|_A$, we must have $\chi=\psi$ on $\rC^*(A)$. Recall also that the \emph{numerical range} of an operator $T\in B(H)$ is the set consisting of complex numbers of the form $\lambda(T)$ where $\lambda:B(H)\to \bC$ is a unital contractive linear map.

\begin{theorem}\label{T:ratShilov}
Let $T\in B(H)$ and let $X\subset \bC$ be a spectral set for $T$. Let $A\subset B(H)$ denote the unital subalgebra generated by $T$, and let $A_X=\{r(T):r\in \R(X)\}\subset B(H)$.
Then, the spectrum of $T$ is contained in the interior of $X$ provided that either of the following sets of conditions holds.
\begin{enumerate}[{\rm (a)}]
\item $X$ contains the numerical range of $T$ and $A$ admits no boundary character.
\item  $X$ is a complete spectral set for $T$ and $A_X$ admits no boundary character.
\end{enumerate}
\end{theorem}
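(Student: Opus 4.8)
The plan is to prove both cases by contraposition, through a single mechanism. Since $X$ is a spectral set, the fact that the Taylor/Riesz--Dunford calculus is a homomorphism on $\O(X)$ already forces $\sigma(T)\subseteq X$ (if $\lambda\notin X$ then $(z-\lambda)^{-1}\in\O(X)$ inverts $T-\lambda$). So it suffices to rule out a point $\lambda_0\in\sigma(T)\cap\partial X$. Given such a $\lambda_0$, the evaluation $T\mapsto\lambda_0$ defines a character $\chi_0$ on the closed unital algebra generated by $T$ (resp.\ by the $\R(X)$-calculus): the spectrum only grows in subalgebras, so $\lambda_0\in\sigma(T)$ lies in the relevant Gelfand spectrum. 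I will show that under either hypothesis $\chi_0$ is in fact a boundary character, contradicting the standing assumption that none exists. The common engine is the following: if every unital contractive functional $\psi$ on $\rC^*(A)$ (resp.\ $\rC^*(A_X)$) extending $\chi_0$ must satisfy $\psi(T^*T)=|\lambda_0|^2=\psi(TT^*)$, then in its GNS representation the cyclic vector $\xi_\psi$ is a common eigenvector, $\pi_\psi(T)\xi_\psi=\lambda_0\xi_\psi$ and $\pi_\psi(T)^*\xi_\psi=\overline{\lambda_0}\xi_\psi$; consequently $\psi$ evaluates every word in $T,T^*$ at $(\lambda_0,\overline{\lambda_0})$ and is therefore unique. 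Thus everything reduces to forcing the scalar identity $\psi(T^*T)=|\lambda_0|^2$ for an arbitrary extension $\psi$.

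For case (a) I would exploit the numerical range. As $\overline{W(T)}\subseteq X$ is convex and $\lambda_0\in\overline{W(T)}$ while $\lambda_0\in\partial X$, the point $\lambda_0$ must lie on $\partial\overline{W(T)}$; choose a supporting line, i.e.\ a unimodular $c$ with $\re(\bar c\lambda_0)=\max\sigma(\re(\bar cT))=:M$. Any extension $\psi$ of $\chi_0$ satisfies $\psi(T^*)=\overline{\psi(T)}=\overline{\lambda_0}$, hence $\psi(\re(\bar cT))=M$. Since $MI-\re(\bar cT)\ge 0$ and $\psi$ kills it, a Cauchy--Schwarz argument shows $\psi$ annihilates $a\,(MI-\re(\bar cT))$ for every $a$, giving $\psi(a\,\re(\bar cT))=M\psi(a)$. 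Feeding in $a=T$, recalling that $\psi(T^2)=\chi_0(T^2)=\lambda_0^2$ and that $2M=\bar c\lambda_0+c\overline{\lambda_0}$, collapses the resulting expression exactly to $\psi(T^*T)=\psi(TT^*)=|\lambda_0|^2$, after which the engine applies. Notably this uses only contractivity of the calculus, which is why no completeness hypothesis is needed here.

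For case (b) there is no numerical range to lean on, so I would instead extract the rigidity from the boundary geometry of $X$ via noncommutative Choquet theory. For a domain whose boundary consists of finitely many rectifiable curves one has $\R(X)=A(X)$ and every point of $\partial X$ lies in the Choquet (Shilov) boundary of $\R(X)$; in particular there is an extremal/peak function $f\in\R(X)$ with $f(\lambda_0)=\|f\|_X=1$. Put $F=f(T)$, so $\|F\|\le1$ and $\chi_0(F)=1$. For any extending state $\psi$, the equality case of Cauchy--Schwarz yields $\pi_\psi(F)\xi_\psi=\xi_\psi=\pi_\psi(F)^*\xi_\psi$. The peaking then upgrades this to an eigenvector for $T$ itself: for $g\in\R(X)$ one has the norm estimate
\[
\|g(T)F^n-g(\lambda_0)F^n\|\le\|(g-g(\lambda_0))f^n\|_X\xrightarrow[n\to\infty]{}0,
\]
since $|f|^n\to 0$ off any neighbourhood of $\lambda_0$, whence $\pi_\psi(T)\xi_\psi=\lambda_0\xi_\psi$ and, symmetrically, $\pi_\psi(T)^*\xi_\psi=\overline{\lambda_0}\xi_\psi$. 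Again $\psi(T^*T)=|\lambda_0|^2$ and the engine finishes the proof. The role of the complete spectral set hypothesis is to make the functional calculus $\R(X)\to A_X$ unitally completely contractive, so that the representing-measure uniqueness governing the Choquet boundary of $\R(X)$ legitimately transfers into a unique state extension, that is, into a boundary character for the operator algebra $A_X$.

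I expect the main obstacle to be precisely this step of forcing $\psi(T^*T)=|\lambda_0|^2$ for every extension, which amounts to controlling the adjoint $\pi_\psi(T)^*$ on the cyclic vector. In (a) the difficulty is packaged into the positivity/kernel computation off the supporting hyperplane and is essentially algebraic once the supporting line is identified; the only delicate geometric point is verifying $\lambda_0\in\partial\overline{W(T)}$ together with the existence of the supporting functional. In (b) the difficulty is genuinely function-theoretic and $\rC^*$-algebraic: one must ensure that rectifiable boundaries really do make every $\lambda_0\in\partial X$ a Choquet boundary point of $\R(X)$ carrying the attendant peak/extremal function, and that complete contractivity is strong enough to push the representing-measure uniqueness through the calculus. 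Handling the possible failure of a literal peak function at non-smooth boundary points, and replacing it by the unique-representing-measure formulation, is where I would expect to spend the most care.
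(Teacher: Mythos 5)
Your case (a) is correct, and it is in effect a self-contained reproof of the result the paper handles by citation: the paper simply invokes Arveson's boundary theorem (his Theorem 3.1.2) to produce a boundary character from a point of $\sigma(T)\cap \partial W$, where $W$ is the (closed) numerical range. Your supporting-line computation forcing $\psi(T^*T)=\psi(TT^*)=|\lambda_0|^2$ for every contractive extension $\psi$, followed by the GNS joint-eigenvector mechanism, is sound, and you correctly observe that only contractivity is needed there. What your version buys is independence from Arveson's theorem; what the paper's version buys is brevity.

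Case (b), however, has a genuine gap, and you half-identify it yourself in your closing paragraph. The theorem is stated for an \emph{arbitrary} compact spectral set $X$; the hypothesis that $\partial X$ consists of finitely many rectifiable curves belongs to Theorem \ref{T:Williams}, not to this statement, so you may not import it. Your argument hinges on a peak function $f\in\R(X)$ with $f(\lambda_0)=\|f\|_X=1$ and $|f|<1$ off neighbourhoods of $\lambda_0$, i.e.\ on $\lambda_0$ being a peak (Choquet boundary) point of $\R(X)$. While every point of $\partial X$ does lie in the \emph{Shilov} boundary of $\ol{\R(X)}$, it need not lie in the Choquet boundary -- Swiss-cheese type sets have boundary points of $\R(X)$ that are not peak points -- so the function $f$ your estimate $\|g(T)F^n-g(\lambda_0)F^n\|\leq\|(g-g(\lambda_0))f^n\|_X$ relies on may simply not exist. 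Symptomatically, your sketch of (b) never actually uses the complete-spectral-set hypothesis. The missing mechanism, which is exactly what the paper supplies, is Arveson's extension theorem: complete contractivity of $r\mapsto r(T)$ on $\R(X)$ yields a unital completely positive extension $\rho:\rC(X)\to B(H)$, so that for any unital contractive extension $\chi$ of the character $\chi_0$, the composition $\chi\circ\rho$ is literally a representing measure for $\lambda_0$ on $X$; uniqueness of the representing measure at Shilov boundary points of $\ol{\R(X)}$ (Gamelin) then forces $\chi\circ\rho$ to be evaluation at $\lambda_0$, and the Kadison--Schwarz inequality together with the multiplicative-domain theorem shows $\chi$ is a $*$-homomorphism on $\rC^*(A_X)$, hence a boundary character. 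Your proposal to ``replace the peak function by the unique-representing-measure formulation'' is precisely this step, but without the UCP extension there is no bridge between representing measures on $X$ and states on $\rC^*(A_X)$, so as written your proof of (b) does not go through.
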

\begin{proof}
First note that since $X$ is a spectral set for $T$, it must contain $\sigma(T)$. Thus, it suffices to show that $\sigma(T)$ is disjoint from the boundary $\partial X$.  Let $\zeta\in \partial X\cap \sigma(T)$.

(a)  If $X$ contains the numerical range $W$ of $T$, then we must have $\zeta\in \sigma(T)\cap  \partial W$, since $W$ always contains $\sigma(T)$. Then, there is a boundary character $\chi$ for $A$ such that $\chi(T)=\zeta$ by \cite[Theorem 3.1.2]{arveson1969}.

(b) If $X$ is a complete spectral set for $T$, then the unital homomorphism $\rho_0:\R(X)\to B(H)$ defined as
\[
\rho_0(r)= r(T), \quad r\in \R(X)
\]
is completely contractive. By Arveson's extension theorem, there is a unital completely positive map $\rho:\rC(X)\to B(H)$ extending $\rho_0$.

Now, $T-\zeta I$ is not invertible in the commutative Banach algebra $\ol{A_X}$, so there is a character $\chi_0:A_X\to\bC$ satisfying $\chi_0(T)=\zeta$.  Let $\chi:B(H)\to\bC$ be a unital contractive linear map extending $\chi_0$. We will show that $\chi$ is necessarily a $*$-homomorphism on $\rC^*(A_X)$, which immediately implies that $\chi|_{\rC^*(A_X)}$ is a boundary character for $A_X$.

Note that 
\begin{equation}\label{Eq:Shilov}
(\chi\circ \rho)(r)=\chi_0(r(T))=r(\zeta), \quad r\in \R(X).
\end{equation}
By  \cite[Theorem II.1.3]{gamelin1969}, we see that $\zeta$ lies in the Shilov boundary of $\ol{\R(X)}$. In turn, by \cite[Theorem II.11.3]{gamelin1969}, it follows that $\zeta$ admis a unique representing measure, so \eqref{Eq:Shilov} implies that $\chi\circ \rho$ is the character on $\rC(X)$ of evaluation at $\zeta$. In particular, $\chi\circ \rho$ is a $*$-homomorphism. For $r\in \R(X)$,  we find by the Schwarz inequality  \cite[Proposition 3.3]{paulsen2002} that 
\begin{align*}
\chi(r(T)^*r(T))&=\chi(\rho(r)^*\rho(r))\leq(\chi\circ \rho)(r^*r)\\
&=|(\chi\circ\rho)(r)|^2=|\chi(r(T))|^2\\
&\leq \chi(r(T)^*r(T))
\end{align*}
so $\chi(r(T)^*r(T))=|\chi(r(T))|^2$. Similarly $\chi(r(T)r(T)^*)=|\chi(r(T))|^2$, so that $\chi$ is a $*$-homomorphism by \cite[Theorem 3.18]{paulsen2002}.
\end{proof}

We can now give an application of Theorem \ref{T:Williams}. Recall that the Cuntz algebra $\O_d$ is the universal $\rC^*$-algebra generated by isometries $s_1,\ldots,s_d$ satisfying $\sum_{j=1}^d s_j s_j^*=1$. A key property of this algebra is that, for $d\geq 2$, it is infinite-dimensional and simple \cite[Theorem 1.13]{cuntz1977}.

\begin{corollary}\label{C:Cuntz}
Let $d\geq 2$. Then, $\O_d$ is generated by a single contraction for which the closed unit disc is a minimal spectral set.
\end{corollary}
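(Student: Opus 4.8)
The plan is to produce a single norm-one contraction $T$ with $\rC^*(T)=\O_d$, and then to verify the two hypotheses of Theorem~\ref{T:Williams} with $G=\bD$. The point to keep in mind from the outset is that, for \emph{any} contraction $T$, the closed disc $\ol{\bD}$ is automatically a complete spectral set: by Sz.-Nagy's unitary dilation and von Neumann's inequality, the functional calculus $r\mapsto r(T)$ is completely contractive on $\R(\ol{\bD})$. In particular $\ol{\bD}$ is a spectral set for $T$, which is the standing hypothesis of Theorem~\ref{T:Williams}, so the only real work lies in constructing $T$ and in checking conditions (i) and (ii).

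For the construction I would begin with the universal generators $s_1,\ldots,s_d$ of $\O_d$ (isometries with $\sum_j s_j s_j^*=I$) and exhibit a single element $x$ with $\rC^*(x)=\O_d$. Some care is needed here: by Coburn's theorem a lone isometry, and hence any scalar multiple of one, generates only the Toeplitz algebra, while a single normal element generates a commutative algebra; so $x$ must be a genuinely non-normal contraction assembled from the $s_j$ together with their adjoints. Granting such an $x$ — that is, granting that $\O_d$ is singly generated — one simply rescales and sets $T=x/\|x\|$, which leaves the generated $\rC^*$-algebra unchanged since $\rC^*(\alpha x)=\rC^*(x)$ for $\alpha\neq 0$. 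This yields $\rC^*(T)=\O_d$ together with $\|T\|=1$. I expect this single-generation step to be the main obstacle, as it is the one ingredient not furnished by the general machinery of Section~\ref{S:minspec}.

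With $T$ in hand, condition (ii) is immediate: the non-constant $\phi(z)=z\in\Hinf$ satisfies $\|\phi\|_{\bD}=1$ and $\|\phi(T)\|=\|T\|=1$. For condition (i) I would invoke Theorem~\ref{T:ratShilov}(a) with $X=\ol{\bD}$. The numerical range of the contraction $T$ is contained in $\ol{\bD}$ automatically (every unital contractive functional $\lambda$ has $|\lambda(T)|\le\|T\|=1$), so it remains only to rule out a boundary character for the unital algebra $A$ generated by $T$. This is exactly where simplicity enters: $\rC^*(A)=\rC^*(T)=\O_d$ is simple and infinite-dimensional \cite{cuntz1977}, hence admits no character at all — a character $\O_d\to\bC$ would have kernel a proper nonzero closed two-sided ideal, contradicting simplicity — and therefore no boundary character. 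Theorem~\ref{T:ratShilov}(a) then gives $\sigma(T)\subset\bD$, which is condition (i).

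Having verified both hypotheses, Theorem~\ref{T:Williams} applies with $G=\bD$ (whose boundary is a single rectifiable simple closed curve) and shows that $\ol{\bD}$ is a minimal spectral set for $T$, completing the argument. The only genuinely delicate point is the explicit single generator; the remainder is a clean assembly of the dilation theorem, the simplicity of $\O_d$, and Theorems~\ref{T:ratShilov} and \ref{T:Williams}.
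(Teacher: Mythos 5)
Your proposal is correct and takes essentially the same route as the paper: the paper likewise obtains a norm-one single generator $T$ of $\O_d$ by citing \cite{nagisa2004} for exactly the single-generation fact you flag as the main obstacle, notes that $\ol{\bD}$ is a (complete) spectral set via von Neumann's inequality, takes $\phi(z)=z$ for condition (ii) of Theorem~\ref{T:Williams}, and rules out one-dimensional unital $*$-representations of $\O_d$ using simplicity and infinite-dimensionality so that Theorem~\ref{T:ratShilov} gives $\sigma(T)\subset\bD$. The only cosmetic difference is that you invoke part (a) of Theorem~\ref{T:ratShilov} through the numerical range, whereas the paper's complete-spectral-set observation points to part (b); both branches apply equally well here.
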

\begin{proof}
There is $T\in \O_d$ with $\|T\|=1$ such that $\rC^*(T)=\O_d$ \cite{nagisa2004}. Note  that $\R(\ol{\bD})$ coincides with the norm closure of the polynomials, by Runge's approximation theorem. Thus, it follows from von Neumann's inequality that $\ol{\bD}$ is a complete spectral set for $T$. Since $\|T\|=\|z\|$, to see that $\ol{\bD}$ is a minimal spectral set for $T$ it suffices, by virtue of Theorems \ref{T:Williams} and \ref{T:ratShilov}, to insure that $\O_d$ has no one-dimensional unital $*$-representations. But this is an immediate consequence of the fact that $\O_d$ is simple and infinite-dimensional.
\end{proof}

\subsection{Minimal spectral sets in two variables}

We now wish to apply our findings on minimality of spectral sets to Agler--McCarthy varieties. First, we prove something that was alluded to in the introduction, namely that \eqref{Eq:AM} does indeed imply the formally stronger statement that a distinguished Agler--McCarthy variety for a commuting pair of contractions is a spectral set for the pair. We need a preliminary statement, which is likely well known to experts (see for instance \cite[Lemma 7.4]{BKS2022}).

\begin{lemma}\label{L:polyconv}
Let $\Psi$ be a rational matrix-valued pure inner function on $\bD$ and let
\[
V=\{(z,w)\in \bD^2:\det(\Psi(z)-wI)=0\}.
\]
Then, there is a polynomial $\psi$ in two variables such that
\[
\{(z,w)\in \ol{\bD}^2:\det(\Psi(z)-wI)=0\}=\{(z,w)\in \ol{\bD}^2:\psi(z,w)=0\}
\]
and moreover this set coincides with $\ol{V}$. In particular, $\ol{V}$ is polynomially convex.
\end{lemma}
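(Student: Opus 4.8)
The plan is to manufacture the polynomial $\psi$ by clearing denominators in the characteristic polynomial of $\Psi(z)$, then establish the two set equalities, the substantive one being the reverse inclusion onto $\ol{V}$. Throughout write $n=\dim E$ and set $W:=\{(z,w)\in\ol{\bD}^2:\det(\Psi(z)-wI)=0\}$.

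\textbf{The polynomial and the easy inclusions.} First I would expand $\det(wI-\Psi(z))=\sum_{k=0}^{n}c_k(z)w^k$ with $c_n\equiv 1$; each $c_k(z)$ is a polynomial in the entries of $\Psi(z)$, hence a rational function of $z$. Since $\Psi$ is inner, its entries are bounded holomorphic functions on $\bD$, and being rational they have no poles on $\ol{\bD}$; thus there is a single polynomial $q$, nonvanishing on $\ol{\bD}$, clearing all the denominators of the $c_k$. Setting $\psi(z,w)=q(z)\det(\Psi(z)-wI)$ gives a genuine two-variable polynomial, not identically zero (it is $q(z)$ times a monic degree-$n$ polynomial in $w$). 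Because $q$ has no zero on $\ol{\bD}$, on $\ol{\bD}^2$ the equations $\psi=0$ and $\det(\Psi(z)-wI)=0$ have the same solution set, which yields the first displayed equality $W=\{\psi=0\}\cap\ol{\bD}^2$. Since $W$ is the zero set of a continuous function on the compact set $\ol{\bD}^2$ it is closed, and $V=W\cap\bD^2\subseteq W$, so $\ol{V}\subseteq W$.

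\textbf{The reverse inclusion $W\subseteq\ol{V}$, via fibres over $z$.} The key structural input is the following claim: for every $z\in\bD$, all eigenvalues of $\Psi(z)$ lie in $\bD$. Granting this, any $(z,w)\in W$ with $z\in\bD$ automatically has $|w|<1$, hence lies in $V\subseteq\ol{V}$. It then remains to treat $(z_0,w_0)\in W$ with $|z_0|=1$. Here I would use that the coefficients $c_k$ are continuous on $\ol{\bD}$ (as $\Psi$ is rational with poles off $\ol{\bD}$, hence holomorphic on a neighbourhood of $\ol{\bD}$) together with the continuous dependence of the roots of a monic polynomial on its coefficients. Approaching radially, set $z_t:=tz_0\to z_0$ as $t\nearrow 1$, and for each $t$ choose an eigenvalue $w_t$ of $\Psi(z_t)$ nearest to $w_0$; continuity of roots forces $w_t\to w_0$. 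Since $z_t\in\bD$, the claim gives $|w_t|<1$, so $(z_t,w_t)\in V$ and $(z_t,w_t)\to(z_0,w_0)$, whence $(z_0,w_0)\in\ol{V}$. This gives $W\subseteq\ol{V}$ and therefore $W=\ol{V}$.

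\textbf{The claim — where purity enters, and the main obstacle.} The heart of the matter is showing that $\Psi(z)$ has no unimodular eigenvalue for $z\in\bD$; this is precisely where the hypothesis that $\Psi$ is \emph{pure} is used, and I expect it to be the main obstacle. Suppose instead $\Psi(z_0)x=w_0x$ with $z_0\in\bD$, $|w_0|=1$ and $\|x\|=1$. Consider $g(z)=\langle\Psi(z)x,x\rangle$, which is holomorphic on $\bD$ with $|g(z)|\le\|\Psi(z)\|\le 1$ and $g(z_0)=w_0$ of modulus $1$; the maximum modulus principle forces $g\equiv w_0$. Then for every $z\in\bD$ one has $\langle\Psi(z)x,x\rangle=w_0$ while $\|\Psi(z)x\|\le 1=\|x\|$, and the equality case of Cauchy--Schwarz forces $\Psi(z)x=w_0x$ for all $z\in\bD$. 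Thus $x$ is a constant unimodular eigendirection of $\Psi$, i.e.\ $\Psi$ splits off a one-dimensional unitary summand, contradicting purity. The delicate point to get right is exactly this rigidity: a single interior unimodular eigenvalue is promoted, via maximum modulus plus the Cauchy--Schwarz equality case, to a genuine constant inner summand, which is what ``pure'' forbids.

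\textbf{Polynomial convexity.} Finally, with $W=\{\psi=0\}\cap\ol{\bD}^2=\ol{V}$ in hand, polynomial convexity is immediate. The closed bidisc $\ol{\bD}^2$ is polynomially convex (testing a candidate point against the monomials $z^k$ and $w^k$ forces both coordinates into $\ol{\bD}$). Now let $\lambda$ lie in the polynomial hull of $W$. Testing against $z^k,w^k$ places $\lambda\in\ol{\bD}^2$, and testing against $\psi$ gives $|\psi(\lambda)|\le\sup_W|\psi|=0$, so $\psi(\lambda)=0$; hence $\lambda\in\{\psi=0\}\cap\ol{\bD}^2=W$. Thus $\ol{V}=W$ is polynomially convex.
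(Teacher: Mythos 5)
Your proposal is correct, and its overall skeleton matches the paper's: both arguments hinge on the spectral inclusion $\sigma(\Psi(z))\subset\bD$ for $z\in\bD$, use continuity of the eigenvalues of $\Psi$ (which extends continuously to $\ol{\bD}$ since it is rational with no poles there) to approximate any point of $W$ by points of $V$, and conclude polynomial convexity by testing hull points against $\psi$ and the coordinate functions. Where you genuinely diverge is in the two steps the paper treats as black boxes. First, the paper obtains $\psi$ by simply remarking that $(z,w)\mapsto\det(\Psi(z)-wI)$ is rational; your explicit denominator-clearing, with the observation that the clearing polynomial $q$ can be taken zero-free on $\ol{\bD}$ so that $\psi=0$ and $\det(\Psi(z)-wI)=0$ cut out the same set in $\ol{\bD}^2$, is exactly the justification that glossed remark needs. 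Second, for the key claim that $\Psi(z)$ has no unimodular eigenvalues on $\bD$, the paper cites \cite[Theorem 1.12]{AM2005} (via $V$ being distinguished; the same fact appears as \eqref{Eq:specPsi}, where it is taken as the meaning of purity), whereas you give a self-contained rigidity proof: maximum modulus applied to $g(z)=\langle\Psi(z)x,x\rangle$ plus the equality case of Cauchy--Schwarz promotes one interior unimodular eigenvalue to a constant scalar unitary summand of $\Psi$ (and indeed $\bC x$ is reducing, since a contraction $A$ with $Ax=w_0x$, $|w_0|=1$, satisfies $A^*x=\ol{w_0}x$), contradicting purity in the Sz.-Nagy--Foias sense. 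Your version buys independence from the Agler--McCarthy machinery and works verbatim under the standard definition of pure; the paper's citation is shorter and keeps the lemma aligned with the distinguished-variety framework it uses elsewhere. Minor cosmetic difference: you split the approximation step into interior and boundary fibres over $z$, while the paper handles all points of $W$ uniformly by approximating the first coordinate from within $\bD$; your choice of the nearest eigenvalue $w_t$ is sound, since $\operatorname{dist}(w_0,\sigma(\Psi(z_t)))^n\leq|\det(\Psi(z_t)-w_0I)|\to 0$.
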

\begin{proof}
Consider the  set
\[
V'=\{(z,w)\in \ol{\bD}^2:\det(\Psi(z)-wI)=0\}
\]
which is clearly closed since $\Psi$ is continuous on $\ol{\bD}$. Then,  $\ol{V}\subset V'$. Conversely, let $(\lambda,\mu)\in  V'$,  so that $\det(\Psi(\lambda)-\mu I)=0$. Choose a sequence $(\lambda_n)$ in $\bD$ converging to $\lambda$. It follows that the sequence $(\det(\Psi(\lambda_n)-\mu I))_n$ converges to $0$. Hence, for each $n$ there is   $\mu_n\in \sigma(\Psi(\lambda_{n}))$ such that $(\mu_n)$ converges to $\mu$. Since $V$ is a distinguished variety \cite[Theorem 1.12]{AM2005}, each $\mu_n$ lies in the open disc $\bD$. Hence, $((\lambda_n,\mu_n))_n$ is a sequence in $V$ converging to $(\lambda,\mu)$. We conclude that $V'=\ol{V}$.

Next, since $\Psi$ is rational, so is the function
\[
(z,w)\mapsto \det(\Psi(z)-wI)
\]
so there is a polynomial $\psi$ in two variables such that
\[
\ol{V}=\{(z,w)\in \ol{\bD}^2:\psi(z,w)=0\}.
\] 
The polynomial convexity statement now follows from a standard argument. 
Given $(\lambda,\mu)\in \bC^2$ such that 
\[
|p(\lambda,\mu)|\leq \max_{\ol{V}} |p|
\]
for every polynomial $p$, it readily follows that that $\psi(\lambda,\mu)=0$. In addition,
\[
|\lambda|\leq \|z_1\|_{\ol{V}}\leq 1  \qand |\mu|\leq \|z_2\|_{\ol{V}}\leq 1
\]
so that $(\lambda,\mu)\in \ol{V} $. We conclude that $\ol{V}$ is polynomially convex. 
\end{proof}

We can now show that the closure of Agler--McCarthy distinguished varieties are spectral sets.

\begin{theorem}\label{T:AMratspectral}
Let $T_1,T_2\in B(H)$ be commuting contractions. Assume that there is an Agler--McCarthy distinguished variety $V\subset \bD^2$ for $(T_1,T_2)$. Then, $\ol{V}$ is a spectral set for $(T_1,T_2)$.
\end{theorem}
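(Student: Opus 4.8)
The plan is to upgrade the polynomial estimate \eqref{Eq:AM} to the full algebra $\R(\ol{V})$, which is precisely what is required for $\ol{V}$ to be a spectral set. The two ingredients are the polynomial convexity of $\ol{V}$ furnished by Lemma \ref{L:polyconv} and the continuity of the Taylor functional calculus.

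First I would verify that $\sigma(T_1,T_2)\subset \ol{V}$, so that the functional calculus on $\R(\ol{V})$ is even defined. For $(\lambda_1,\lambda_2)\in \sigma(T_1,T_2)$ and any polynomial $p$, the spectral mapping theorem for the Taylor spectrum gives $p(\lambda_1,\lambda_2)\in \sigma(p(T_1,T_2))$, whence $|p(\lambda_1,\lambda_2)|\leq \|p(T_1,T_2)\|\leq \sup_V|p|=\|p\|_{\ol{V}}$ by \eqref{Eq:AM} (here $\sup_V|p|=\|p\|_{\ol{V}}$ because $V$ is dense in $\ol{V}$ and $p$ is continuous). Thus $(\lambda_1,\lambda_2)$ lies in the polynomial hull of $\ol{V}$, which equals $\ol{V}$ by Lemma \ref{L:polyconv}.

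Next, fix $r\in \R(\ol{V})$, holomorphic on some open $U\supset \ol{V}$. The key point, and what I expect to be the main obstacle, is that $\ol{V}$, being the closure of a variety, has empty interior, so uniform approximation of $r$ on $\ol{V}$ alone does not suffice to pass the estimate through the functional calculus: the latter is continuous only with respect to uniform convergence on neighbourhoods of $\sigma(T_1,T_2)$. To circumvent this, I would interpose a polynomially convex compact set between $\ol{V}$ and $U$. Since $\ol{V}$ is polynomially convex, there is a (compact) polynomial polyhedron $L$ with $\ol{V}\subset L^\circ\subset L\subset U$, using that polynomially convex compacta admit a neighbourhood basis of such polyhedra. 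As $L$ is polynomially convex and $r$ is holomorphic on a neighbourhood of $L$, the Oka--Weil theorem supplies polynomials $p_n\to r$ uniformly on $L$.

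Finally, uniform convergence on $L\supset \ol{V}$ gives $\|p_n\|_{\ol{V}}\to \|r\|_{\ol{V}}$, while uniform convergence on the neighbourhood $L^\circ\supset \ol{V}\supset \sigma(T_1,T_2)$, together with the continuity of the Taylor functional calculus, yields $p_n(T_1,T_2)\to r(T_1,T_2)$ in norm. Combining these with \eqref{Eq:AM} applied to each $p_n$,
\[
\|r(T_1,T_2)\|=\lim_n\|p_n(T_1,T_2)\|\leq \lim_n \|p_n\|_{\ol{V}}=\|r\|_{\ol{V}},
\]
so $\ol{V}$ is a spectral set for $(T_1,T_2)$. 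The only delicate steps are the construction of the polynomially convex thickening $L$ and the precise invocation of continuity of the Taylor calculus; the remainder is bookkeeping.
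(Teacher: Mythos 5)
Your proof is correct, but the second half follows a genuinely different route from the paper's. Both arguments begin by placing $\sigma(T_1,T_2)$ inside $\ol{V}$ using Lemma \ref{L:polyconv}, though by different mechanisms: the paper extracts the defining polynomial $\psi$ of $\ol{V}$, observes that $\psi(T_1,T_2)=0$ by \eqref{Eq:AM}, and applies the spectral mapping theorem, whereas you show directly that every joint spectral point lies in the polynomial hull of $\ol{V}$ and then invoke polynomial convexity; your version has the small advantage of not needing $\sigma(T_1,T_2)\subset\ol{\bD}^2$ as a separate input. The real divergence is in the estimate on $\R(\ol{V})$. The paper writes $r=p/q$ with $q$ zero-free on $\ol{V}$, uses the spectral mapping theorem to invert $q(T_1,T_2)$, and identifies $r(T_1,T_2)=p(T_1,T_2)q(T_1,T_2)^{-1}$ through the Taylor calculus, staying entirely within the rational calculus (the final contractivity inequality is stated tersely there; fleshing it out would require approximating $1/q$ by polynomials on the polynomially convex set $\ol{V}$ via Oka--Weil and invoking \eqref{Eq:AM}). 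You instead bypass the $p/q$ decomposition entirely: you thicken $\ol{V}$ to a compact polynomial polyhedron $L$ inside the domain of holomorphy of $r$, apply Oka--Weil on $L$, and use the continuity of the Taylor functional calculus with respect to locally uniform convergence on the fixed neighbourhood $L^\circ\supset\sigma(T_1,T_2)$ to pass the polynomial estimate \eqref{Eq:AM} to the limit. This handles cleanly precisely the identification-of-the-limit issue that the paper's proof glosses over, at the cost of importing two nontrivial facts from several complex variables (the polyhedral neighbourhood basis for polynomially convex compacta and Oka--Weil). One remark: you could obtain norm convergence of $(p_n(T_1,T_2))$ without any appeal to Taylor-calculus continuity, since \eqref{Eq:AM} already shows the sequence is Cauchy ($\|p_n(T_1,T_2)-p_m(T_1,T_2)\|\leq\|p_n-p_m\|_{\ol{V}}$); but identifying the limit with $r(T_1,T_2)$ would then require something like the paper's $q(T_1,T_2)^{-1}$ argument, so your continuity invocation is doing genuine work and is correctly placed.
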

\begin{proof}
By Lemma \ref{L:polyconv}, there is a polynomial $\psi$ such that
\[
\ol{V}=\{(z,w)\in \ol{\bD}^2:\psi(z,w)=0\}.
\]
We first claim that $\ol{V}$ contains the Taylor spectrum of $(T_1,T_2)$. To see this, note that $\psi(T_1,T_2)=0$ by \eqref{Eq:AM}. It then follows from the spectral mapping theorem \cite[Theorem 7.7]{muller2007} that $\sigma(T_1,T_2)$ is contained in the zero set of $\psi$ inside of $\ol{\bD}^2$, that is $\ol{V}$.

Next, let $r\in \R(\ol{V})$ and choose polynomials $p,q$ such that $r=p/q$, where $q$ vanishes only outside of $\ol{V}$. Then, $q(T)$ is invertible by  \cite[Theorem 7.7]{muller2007} again, so that $r(T)=p(T)q(T)^{-1}=\rho(r)$ by  \cite[Theorem 30.7]{muller2007}. Thus
\[
\|r(T)\|=\|\rho(r)\|\leq \|r\|_{\ol{V}}.
\]
\end{proof}

Next, we apply  Theorem \ref{T:Williams} to examine the minimality of Agler--McCarthy varieties as spectral sets.

\begin{theorem}\label{T:min2var}
Let $T_1,T_2\in B(H)$ be commuting contractions. Let $V\subset \bD^2$ be an Agler--McCarthy distinguished variety for $(T_1,T_2)$. 
Assume that
\begin{enumerate}[{\rm (i)}]
\item the spectrum of $T_1$ is contained in $\bD$, and
\item there are non-constant functions $\phi_1\in H^\infty(\bD)$ and $\phi_2\in \R(\ol{\bD})$ such that $1=\|\phi_1\|_{\bD}=\|\phi_2\|_{\bD}=\|\phi_1(T_1)\phi_2(T_2)\|$.
\end{enumerate}
Then $\ol{V}$ is a minimal spectral set for $(T_1,T_2)$.
\end{theorem}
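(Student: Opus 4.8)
The plan is to mimic the proof of Theorem~\ref{T:Williams}, now carried out on the one-dimensional variety $V$, using condition~(ii) to build an extremal modulus-one function and condition~(i) to keep the spectrum off the distinguished boundary. Since Theorem~\ref{T:AMratspectral} already tells us that $\ol V$ is a spectral set for $(T_1,T_2)$, minimality reduces to the following assertion: if $X\subsetneq \ol V$ is a proper closed subset, then $X$ is not a spectral set for $(T_1,T_2)$.

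First I would establish the two-variable analogue of hypothesis~(i): the Taylor spectrum $\sigma(T_1,T_2)$ lies in the interior $V$. Indeed, the first-coordinate projection of $\sigma(T_1,T_2)$ is contained in $\sigma(T_1)\subset \bD$, and for $z\in\bD$ every root $w$ of $\det(\Psi(z)-wI)$ satisfies $|w|<1$ (as already used in the proof of Lemma~\ref{L:polyconv}); hence $\ol V\cap(\bD\times\ol{\bD})=V$ and $\sigma(T_1,T_2)\subset V$. Next, exactly as in Theorem~\ref{T:Williams}, I would pass to an irreducible representation $\pi:B(H)\to B(K)$ and unit vectors $\xi,\eta\in K$ for which $1=\langle \pi(\phi_1(T_1)\phi_2(T_2))\xi,\eta\rangle=\langle \phi_1(\pi(T_1))\phi_2(\pi(T_2))\xi,\eta\rangle$, invoking the two-variable compatibility of $\pi$ with the functional calculi (Riesz--Dunford for $\phi_1$, since $\sigma(T_1)\subset\bD$, and the rational calculus for $\phi_2$) together with the fact that a spectral set for $(T_1,T_2)$ is a spectral set for $(\pi(T_1),\pi(T_2))$. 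Writing $S_i=\pi(T_i)$, we still have $\sigma(S_1,S_2)\subset V$, so any candidate $X$ must contain $\sigma(S_1,S_2)$ and, being proper and closed, may be taken to be $\ol V$ with a small open ball $\B\subset V$ (with $\ol{\B}\cap\sigma(S_1,S_2)=\varnothing$) removed.

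With this normalization I would run the Williams scheme on $V$. Set $\phi=\phi_1\phi_2$, a bounded holomorphic function on $V$ that is continuous up to $\partial\bD$ in the second variable because $\phi_2$ is rational, with $\|\phi\|_{\ol V}=1$ attained by the above matrix coefficient; splitting the product in~(ii) also shows $\|\phi_1(T_1)\|=1=\|\phi_1\|_\bD$ and $\|\phi_2(T_2)\|\le\|\phi_2\|_{\ol\bD}=1$. Since $\phi$ is nonconstant on $V$, the maximum modulus principle gives $|\phi|<1$ on every compact subset of the interior, in particular on $\partial\B$. Assuming towards a contradiction that $X$ is spectral for $(S_1,S_2)$, the Taylor functional calculus should furnish a Cauchy-type kernel $\omega$ on $V\setminus\sigma(S_1,S_2)$ with $\langle g(S_1,S_2)\xi,\eta\rangle=\int_{\partial X} g\,\omega$ for $g$ holomorphic on the interior of $X$, so that $\phi$ solves the associated extremal problem; a Havinson-type duality on the finite bordered Riemann surface $V$ then yields $h\in E^1$ with $(1-|\phi|)|\omega-h|=0$ almost everywhere on $\partial X$. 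Because $|\phi|<1$ on $\partial\B$, the F.~and M.~Riesz uniqueness theorem forces $\omega=h$ across $\B$, and integrating over a contour $\partial U$ enclosing $\sigma(S_1,S_2)$ produces the contradiction $1=\langle\phi(S_1,S_2)\xi,\eta\rangle=\int_{\partial U}\phi\,\omega=\int_{\partial U}\phi\,h=0$.

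The main obstacle is transporting the planar Hardy-space machinery of Theorem~\ref{T:Williams} to the one-dimensional variety $V$: constructing the Cauchy kernel $\omega$ out of the \emph{two-variable} Taylor functional calculus and identifying $\langle g(S_1,S_2)\xi,\eta\rangle$ with a boundary integral over $\partial X$, together with the $E^1$-duality and the Riesz uniqueness theorem in the setting of the finite bordered Riemann surface $\ol V$ rather than a subdomain of $\bC$. Secondary points that I would need to verify are that $\phi=\phi_1\phi_2$ is genuinely nonconstant on $V$ and that $\|\phi\|_{\ol V}=1$, both of which should follow from $\phi_1,\phi_2$ being nonconstant and the first projection mapping $V$ onto $\bD$; note also that a naive reduction to the univariate case via the projection $(z,w)\mapsto z$ fails, since removing $\B$ need not shrink the projection $p(X)$, which is precisely why the intrinsic function theory on $V$ is unavoidable.
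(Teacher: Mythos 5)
Your proposal is a programme rather than a proof: every step that is actually hard is deferred. The two-variable preliminaries are fine (that $\sigma(T_1,T_2)\subset V$ under hypothesis (i), the passage to an irreducible representation, and the norm splitting $\|\phi_1(T_1)\|=1=\|\phi_2(T_2)\|$ all match the paper), but the core of your argument consists of three unestablished claims. First, you posit a Cauchy-type kernel $\omega$ on $V\setminus\sigma(S_1,S_2)$ with $\langle g(S_1,S_2)\xi,\eta\rangle=\int_{\partial X} g\,\omega$; in one variable this is simply the resolvent coefficient $\langle \pi(T-zI)^{-1}\xi,\eta\rangle$, but no such formula comes out of the Taylor functional calculus on a one-dimensional variety, and you offer no construction. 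Second, you invoke Havinson duality between $H^\infty$ and $E^1$, and the F.\ and M.\ Riesz uniqueness theorem, on the ``finite bordered Riemann surface $\ol{V}$''; but $\ol{V}$ is an algebraic curve that may be singular and reducible, so it is not a bordered surface without desingularization, and the entire $E^1$ theory (boundary values, the extremal problem, Riesz uniqueness across an arc) would have to be developed in that setting. You candidly label these ``the main obstacle,'' which is accurate --- but that obstacle \emph{is} the theorem; as written, no proof exists.

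The paper's actual argument avoids all function theory on $V$. After the same normalizations ($X=\ol{V}\setminus D$ for a small polydisc $D$ centred at $(\alpha,\beta)\in V$), it uses the spectral-set property of $X$ to produce a point $(\lambda,\mu)\in X$ with $1\leq|\phi_1(\lambda)\phi_2(\mu)|$, which forces $|\phi_1(\lambda)|=1=|\phi_2(\mu)|$. It then observes that $X_1=\ol{\bD}\setminus B$ (with $B$ the disc of radius $\delta$ about $\alpha$) is a spectral set for $T_1$, so the \emph{one-variable} Theorem~\ref{T:Williams} (via hypothesis (i)) yields $\|\phi_1(T_1)\|>\|\phi_1\|_{X_1}$, hence $\lambda\notin X_1$, i.e.\ $\lambda\in B\subset\bD$. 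Since $V$ is distinguished, $\lambda\in\bD$ forces $\mu\in\bD$, and then $|\phi_2(\mu)|<1$ by the maximum modulus principle --- a contradiction. So your closing assertion that ``intrinsic function theory on $V$ is unavoidable'' is refuted by the paper itself: the reduction \emph{is} through the first coordinate, and the Hardy-space extremal machinery is used only on the disc, already packaged inside Theorem~\ref{T:Williams}. (Your observation that removing $D$ need not shrink the projection of $X$ is a legitimate concern, but it bears on the justification of the single step ``$X$ spectral for $(T_1,T_2)$ implies $X_1$ spectral for $T_1$''; the cure for it is not to rebuild Williams's argument on the variety --- after the one-variable step one only needs the location of $\lambda$, the distinguished geometry of $V$, and maximum modulus for $\phi_2$.)
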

\begin{proof}
It follows from Theorem \ref{T:AMratspectral} that $\ol{V}$ is a spectral set for $(T_1,T_2)$, so we need only show that it is minimal. There is a  rational matrix-valued pure inner function $\Psi$ on $\bD$ with the property that
\[
V=\{(z,w)\in \bD^2:\det(\Psi(z)-wI)=0\}.
\]
Assume that there is a proper closed subset $X\subset \ol{V}$ that is  a spectral set for $(T_1,T_2)$. 
Without loss of generality, we may assume that $X=\ol{V}\setminus D$ where $D\subset \bD^2$ is some polydisc of radius $\delta>0$ centred at some $(\alpha,\beta)\in V$.

Next, $\phi_1$ can be approximated uniformly on compact subsets of $\bD$ by a sequence of polynomials $(p_n)$. Since the spectrum of $T_1$ is contained in $\bD$, it follows that $\phi_1(T_1)$ is the norm-limit of the sequence $(p_n(T_1))$, so that
\[
\|\phi_1(T_1)\|=\lim_{n\to \infty} \|p_n(T_1)\|\leq \lim_{n\to \infty} \|p_n\|_{\bD}=\|\phi_1\|_{\bD}=1
\]
as $\ol{\bD}$ is a spectral set for $T_1$ by von Neumann's inequality. Now, $\ol{\bD}$ is a spectral set for $T_2$ as well, so we find
\[
\|\phi_2(T)\|\leq \|\phi_2\|_{\bD}=1.
\]
The condition $\|\phi_1(T_1)\phi_2(T_2)\|=1$ then forces $\|\phi_1(T_1)\|=1=\|\phi_2(T_2)\|$.

Using that $X$ is spectral for $T$, there is $(\lambda,\mu)\in X$ such that
\[
1=\|\phi_1(T_1)\phi_2(T_2)\|\leq |\phi_1(\lambda)\phi_2(\mu)|.
\]
This implies 
\[
1=\|\phi_1(T_1)\|=|\phi_1(\lambda)| \qand 1=\|\phi_2(T_2)\|=|\phi_2(\mu)|.
\]
Because $X$ is a spectral set for $T$, it also follows that the set $X_1=\ol{\bD}\setminus B$ is a spectral set for $T_1$, where $B=\{z\in \bC:|z-\alpha|<\delta\}$.  By virtue of (i) we may apply Theorem \ref{T:Williams}  to see that  $\|\phi_1(T_1)\|>\|\phi\|_{X_1}$, so $\lambda\notin X_1$ and $\lambda\in B\subset \bD$. Using that $V$ is distinguished and $(\lambda, \mu)\in \ol{V}$ while $\lambda\in \bD$, we conclude that $\mu\in \bD$ as well,  so that $|\phi_2(\mu)|<1$ by the maximum modulus principle. This is a contradiction.
\end{proof}

We note here that condition (ii) above imposes strong restrictions on the nature of $\phi_1$ and $\phi_2$; see for instance \cite[Theorem 2]{williams1967}. We end this section with a simple special case where the condition is satisfied.

\begin{corollary}\label{C:min2varisom}
Let $T_1\in B(H)$ be an operator of norm $1$ that commutes with an isometry $T_2\in B(H)$. Let $V\subset \bD^2$ be an Agler--McCarthy distinguished variety for $(T_1,T_2)$. If the spectrum of $T_1$ is contained in the open unit disc, then $\ol{V}$ is a minimal spectral set for $(T_1,T_2)$.
\end{corollary}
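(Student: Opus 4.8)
The plan is to obtain this as a direct application of Theorem \ref{T:min2var}: since $V$ is assumed to be an Agler--McCarthy distinguished variety for the commuting pair $(T_1,T_2)$, it suffices to check the two hypotheses of that theorem. Hypothesis (i), namely $\sigma(T_1)\subset\bD$, is granted to us outright. Thus all the content of the proof is in producing functions $\phi_1,\phi_2$ witnessing hypothesis (ii).

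The most economical choice is to take both functions to be the relevant coordinate function: I would set $\phi_1(z)=z$ and $\phi_2(w)=w$. These are manifestly non-constant, and $\phi_1\in H^\infty(\bD)$ while $\phi_2\in\R(\ol{\bD})$, the latter being a polynomial. Moreover $\|\phi_1\|_{\bD}=\|\phi_2\|_{\bD}=\sup_{\zeta\in\bD}|\zeta|=1$, so the normalization requirements in (ii) collapse to the single identity $\|\phi_1(T_1)\phi_2(T_2)\|=\|T_1T_2\|=1$.

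To establish this last identity I would exploit both standing hypotheses on $T_2$ simultaneously. Since $T_1$ and $T_2$ commute and $T_2$ is an isometry, for every $x\in H$ we have
\[
\|T_1T_2x\|=\|T_2T_1x\|=\|T_1x\|,
\]
where the first equality is commutativity and the second uses $T_2^*T_2=I$. Taking the supremum over unit vectors gives $\|T_1T_2\|=\|T_1\|=1$. This is the only step that requires an argument, and it is a short one, so I do not anticipate a genuine obstacle here. With $\phi_1(T_1)\phi_2(T_2)=T_1T_2$ of norm $1$, condition (ii) of Theorem \ref{T:min2var} is verified, and the conclusion that $\ol{V}$ is a minimal spectral set for $(T_1,T_2)$ follows immediately.
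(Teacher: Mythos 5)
Your proposal is correct and follows exactly the paper's own argument: both take $\phi_1(z)=\phi_2(z)=z$ and verify hypothesis (ii) of Theorem \ref{T:min2var} via $\|T_1T_2\|=\|T_2T_1\|=\|T_1\|=1$, using commutativity and the isometry property of $T_2$. Your write-up merely spells out the vector-level computation $\|T_2T_1x\|=\|T_1x\|$ that the paper leaves implicit.
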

\begin{proof}
Let $\phi_1(z)=\phi_2(z)=z$. Then, 
\[
\|\phi_1(T_1)\phi_2(T_2)\|=\|T_1 T_2\|=\|T_2 T_1\|=1
\]
since $T_2$ is an isometry. We can thus apply Theorem \ref{T:min2var}.
\end{proof}

\section{Agler--McCarthy varieties and the annihilator}\label{S:specsynth}

Assume that we are given a pair of commuting contractions admits an Agler--McCarthy distinguished variety.  In this section, we aim to describe the variety using data determined by the pair. For this purpose, we start by recalling how Agler--McCarthy varieties arise in terms of dilation theory, following \cite{AM2005} and \cite{DS2017}.

\subsection{Hardy spaces preliminaries}
Let $H^2$ denote the Hardy space on the unit disc $\bD$. For $\lambda\in \bD$, we denote the associated reproducing kernel vector by $k_\lambda\in H^2$, where $k_\lambda(z)=(1-\ol{\lambda}z)^{-1}$ for each $z\in \bD$.
Recall that $H^\infty(\bD)$ and $H^\infty(\bD^2)$ denote the Banach algebras of bounded holomorphic functions on $\bD$ and $\bD^2$ (respectively), equipped with the supremum norm. It is known that both $H^\infty(\bD)$ and $H^\infty(\bD^2)$ are dual spaces, where weak-$*$ convergence of a sequence is characterized by bounded pointwise convergence.

We often will require vector-valued versions of these spaces. Let $E$ be a Hilbert space. Let $H^2(E)$ denote the Hilbert space of $E$-valued $H^2$ functions, which we identify with $H^2\otimes E$. The set $\{k_\lambda\otimes \xi:\lambda\in \bD, \xi\in E\}$ spans a dense subspace of $H^2\otimes E$. 

Given another Hilbert space $E'$,  we denote by $H^\infty(\bD; B(E',E))$ the Banach space of $B(E',E)$-valued bounded holomorphic functions on $\bD$.  For $$\Phi\in H^\infty(\bD;B(E',E)),$$ we let $M_\Phi:H^2\otimes E'\to H^2 \otimes E$ denote the corresponding multiplication operator acting as
\[
(M_\Phi f)(z)=\Phi(z)f(z), \quad z\in \bD, f\in H^2(E').
\] 
Then, \[\|\Phi\|_{H^\infty(\bD;B(E',E))}=\|M_\phi\|_{B(H^2\otimes E',H^2\otimes E)}.\] We say that $\Phi$ is \emph{inner} if $M_\Phi$ is an isometry. We have
\begin{equation}\label{Eq:reprod}
M_\Phi^* (k_\lambda\otimes \xi)=k_\lambda \otimes \Phi(\lambda)^*\xi
\end{equation}
for each $ \xi\in E$ and $\lambda\in \bD$. We need a well-known elementary fact about eigenvectors of multipliers.

\begin{lemma}\label{L:eigenvector}
Let $\Phi\in H^\infty(\bD;B(E)),\lambda\in \bC$ and $f\in H^2(E)$. Then, the following statements are equivalent. 
\begin{enumerate}[{\rm (i)}]
\item $f$ is a non-zero element of $\ker ((M_z\otimes I)^*-\ol{\lambda}I)$.
\item We have $\lambda\in \bD$ and there is a non-zero $\xi\in E$ such that $f=k_\lambda\otimes \xi$.
\end{enumerate}
\end{lemma}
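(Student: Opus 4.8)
The plan is to prove the two implications separately, noting first that the multiplier $\Phi$ plays no role whatsoever: statement (i) concerns only the vector-valued unilateral shift $M_z \otimes I$ on $H^2(E) = H^2 \otimes E$, so I would work directly with its adjoint, the backward shift, and discard $\Phi$ from the discussion.

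For the implication (ii) $\Rightarrow$ (i), I would record the elementary reproducing-kernel identity $M_z^* k_\lambda = \ol{\lambda} k_\lambda$ valid for every $\lambda \in \bD$. This follows from the one-line computation
\[
\langle M_z^* k_\lambda, h\rangle = \langle k_\lambda, M_z h\rangle = \ol{(M_z h)(\lambda)} = \ol{\lambda h(\lambda)} = \langle \ol{\lambda}\, k_\lambda, h\rangle, \quad h\in H^2,
\]
using the reproducing property $\langle h, k_\lambda\rangle = \ol{h(\lambda)}$ dual to $\langle h, k_\lambda \rangle = h(\lambda)$. Tensoring with the identity on $E$ then gives $(M_z\otimes I)^*(k_\lambda\otimes\xi) = \ol{\lambda}(k_\lambda\otimes\xi)$, and since $\lambda\in\bD$ and $\xi\neq 0$ the vector $k_\lambda\otimes\xi$ is nonzero; hence (i) holds.

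For the implication (i) $\Rightarrow$ (ii), I would expand $f = \sum_{n\ge 0} a_n z^n$ with coefficients $a_n\in E$ and use that the backward shift acts coefficientwise as $(M_z\otimes I)^* f = \sum_{n\ge 0} a_{n+1} z^n$. The eigenvalue equation $(M_z\otimes I)^* f = \ol{\lambda} f$ then yields the recursion $a_{n+1} = \ol{\lambda} a_n$, whence $a_n = \ol{\lambda}^{\,n} a_0$ for all $n$. Since $f\neq 0$ forces $a_0\neq 0$, the membership $f\in H^2(E)$ requires $\sum_n |\lambda|^{2n}\|a_0\|^2 < \infty$, which in turn forces $|\lambda|<1$, that is $\lambda\in\bD$. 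With $\lambda\in\bD$ in hand, $\sum_{n\ge 0} \ol{\lambda}^{\,n} z^n = (1-\ol{\lambda} z)^{-1} = k_\lambda$, so $f = k_\lambda\otimes a_0$ and we may take the nonzero $\xi := a_0$.

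There is no genuine obstacle here, as the content is simply the standard description of the point spectrum of the backward shift; but the one step that must not be glossed over is the passage to $|\lambda|<1$. The eigenvector recursion alone admits a formal solution $a_n = \ol{\lambda}^{\,n}a_0$ for \emph{every} $\lambda\in\bC$, and it is precisely square-summability of the coefficients --- i.e.\ the requirement $f\in H^2(E)$ --- that simultaneously rules out $|\lambda|\ge 1$ and identifies the resulting geometric series with the reproducing kernel $k_\lambda$.
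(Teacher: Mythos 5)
Your proof is correct and follows essentially the same route as the paper: the direction (ii)$\Rightarrow$(i) is the reproducing-kernel identity (the paper cites its formula $M_\Phi^*(k_\lambda\otimes\xi)=k_\lambda\otimes\Phi(\lambda)^*\xi$ specialized to $\Phi=z I$), and (i)$\Rightarrow$(ii) is the same Fourier-coefficient comparison yielding the recursion $a_n=\ol{\lambda}^{\,n}a_0$, with square-summability forcing $|\lambda|<1$. Your explicit remark that the $H^2$ membership is what rules out $|\lambda|\ge 1$ is exactly the step the paper's phrase ``Hence, $|\lambda|<1$'' compresses, so nothing is missing.
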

\begin{proof}
(ii)$\Rightarrow$(i): This follows directly from \eqref{Eq:reprod}. 

(i)$\Rightarrow$(ii): Assume that  $f$ is a non-zero element of $\ker ((M_z\otimes I)^*-\ol{\lambda}I)$. Write $f=\sum_{n=0}^\infty z^n \otimes \xi_n$ for some $\xi_n\in E$ satisfying $\sum_{n=0}^\infty \|\xi_n\|^2=1$. Using the equality $(M_z\otimes I)^* f=\ol{\lambda}f$, a straighforward comparison of Fourier coefficients reveals that $\xi_n=\ol{\lambda}\xi_{n-1}$ for each $n\geq 1$. Hence, $|\lambda|<1$ and
$$
f=\sum_{n=0}^\infty \ol{\lambda}^n z^n \otimes \xi_0=k_\lambda\otimes \xi_0.
$$
\end{proof}
Next, we record a crucial factorization property. Let $E,E'$ and $E''$ be Hilbert spaces. Let $\Phi\in H^\infty(\bD;B(E',E))$ and $\Psi\in H^\infty(\bD;B(E'',E))$. It follows from \cite[Theorem 8.57]{agler2002} that
\begin{equation}\label{Eq:Leech}
\ran M_\Phi\subset \ran M_\Psi \quad \text{if and only if} \quad M_\Phi=M_\Psi M_\Gamma
\end{equation}
for some $\Gamma\in H^\infty(\bD;B(E',E''))$. This will be used repeatedly below.

\subsection{An explicit isometric co-extension and functional calculus}\label{SS:coext}
Unless indicated otherwise, for the rest of this section, $T_1,T_2\in B(H)$ will be commuting contractions with finite defects, i.e. the ranges of $I-T_1T_1^*$ and $I-T_2T_2^*$ are both finite-dimensional. Let $d=\dim \ran( I-T_1T_1^*)$.  Assume in addition that both $T_1$ and $T_2$ are pure (or of class $C_{\cdot 0}$), namely
 \[
\lim_{n\to\infty} \|T_i^{*n}h\|=0,\quad h\in H, \quad i=1,2.
\]
By classical dilation theory \cite{nagy2010}, there is an isometry $J:H\to H^2\otimes \bC^d$ with the property that
\[
JT^*_1= (M^*_z\otimes 1) J.
\]
This is the usual minimal isometric co-extension of $T$. 

Next, it follows from \cite[Theorems 3.1 and 4.3]{DS2017} (see also \cite[Theorem 1.5]{DS2024} and \cite[Theorem 5.2]{sarkar2024}) that there is a rational matrix-valued pure inner function $\Psi\in H^\infty(\bD;B(\bC^d))$ such that 
\[
JT_2^*= M_\Psi^* J.
\]
Here, the fact that $\Psi$ be pure means that $\Psi$ has no unimodular eigenvalues in the interior of the disc. Thus
\begin{equation}\label{Eq:specPsi}
\sigma(\Psi(\lambda))\subset \bD, \quad \lambda\in \bD.
\end{equation}
If $f$ is a polynomial in two variables, then
\begin{equation}\label{Eq:functcalcpoly}
Jf(T_1,T_2)^*=f(M_z\otimes I,M_\Psi)^* J.
\end{equation}
We require an extension of this polynomial functional calculus. Given $f\in H^\infty(\bD^2)$, let $\Delta_\Psi:\bD\to \bD\times B(\bC^d)$ be defined as
\[
\Delta_\Psi(z)=(z,\Psi(z)), \quad z\in \bD.
\]
\begin{lemma}\label{L:weak*}
The following statements hold.
\begin{enumerate}[{\rm (i)}]
\item If $f$ is a polynomial in two variables, then $f\circ \Delta_\Psi\in H^\infty(\bD)$ and
\[
f(M_z\otimes I,M_\Psi)=M_{f\circ \Delta_\Psi}.
\]
\item If $(f_n)$ is a sequence in $H^\infty(\bD^2)$ converging to $0$ in the weak-$*$ topology, then the sequence $(M_{f_n\circ \Delta_\Psi}^*)$ converges to $0$ in the strong operator topology of $B(H^2\otimes \bC^d)$.
\end{enumerate}
\end{lemma}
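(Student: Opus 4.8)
The plan is to treat the two parts separately: part (i) is a direct symbol computation, while part (ii) rests on the reproducing-kernel formula \eqref{Eq:reprod} together with a functional-calculus estimate for the matrices $\Psi(\lambda)$.

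For part (i), I would first observe that $f\circ\Delta_\Psi$ is the map $z\mapsto f(z,\Psi(z))$, which is a matrix polynomial in $z$ and in the bounded holomorphic function $\Psi$, hence lies in $H^\infty(\bD;B(\bC^d))$. To verify the operator identity it suffices, by linearity, to treat a monomial $f(z,w)=z^jw^k$. Since $M_z\otimes I$ is multiplication by the symbol $z\mapsto zI$ and $M_\Psi$ is multiplication by $\Psi$, these two multiplication operators commute and their product is again a multiplication operator whose symbol is the pointwise product of the symbols; thus $f(M_z\otimes I,M_\Psi)=(M_z\otimes I)^jM_\Psi^k=M_{z^j\Psi^k}=M_{f\circ\Delta_\Psi}$. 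Summing over monomials gives the claim for every polynomial $f$.

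For part (ii), the first task is to make sense of $f\circ\Delta_\Psi$ for $f\in H^\infty(\bD^2)$ and to secure a uniform norm bound. For fixed $\lambda\in\bD$, the matrix $\Psi(\lambda)$ is a contraction (as $\Psi$ is inner, $\|\Psi\|_\infty=\|M_\Psi\|=1$) whose spectrum lies in $\bD$ by \eqref{Eq:specPsi}; being a contraction with no spectrum on the unit circle, it is completely non-unitary and carries a contractive $H^\infty(\bD)$ functional calculus. Applying this calculus to $g=f(\lambda,\cdot)\in H^\infty(\bD)$ defines $f(\lambda,\Psi(\lambda))$ and yields $\|f(\lambda,\Psi(\lambda))\|\le\|f(\lambda,\cdot)\|_{H^\infty(\bD)}\le\|f\|_{H^\infty(\bD^2)}$. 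Local holomorphy of $\lambda\mapsto f(\lambda,\Psi(\lambda))$ follows from a Cauchy-integral representation over a contour in $\bD$ enclosing $\sigma(\Psi(\lambda))$, valid uniformly for $\lambda$ in a small disc by continuity of $\Psi$ and upper semicontinuity of the spectrum; hence $f\circ\Delta_\Psi\in H^\infty(\bD;B(\bC^d))$ with $\|f\circ\Delta_\Psi\|_\infty\le\|f\|_\infty$. Since weak-$*$ convergence is characterized by bounded pointwise convergence, $(f_n)$ is bounded, say by $C$, with $f_n(z,w)\to0$ for every $(z,w)\in\bD^2$; consequently $\sup_n\|M_{f_n\circ\Delta_\Psi}^*\|=\sup_n\|f_n\circ\Delta_\Psi\|_\infty\le C$.

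With uniform boundedness established, it remains to prove convergence to $0$ on a total set. The vectors $k_\lambda\otimes\xi$ span a dense subspace of $H^2\otimes\bC^d$, and by \eqref{Eq:reprod} we have $M_{f_n\circ\Delta_\Psi}^*(k_\lambda\otimes\xi)=k_\lambda\otimes f_n(\lambda,\Psi(\lambda))^*\xi$, so it suffices to show $f_n(\lambda,\Psi(\lambda))\to0$ in $B(\bC^d)$ for each fixed $\lambda$. Here I would use $f_n(\lambda,\Psi(\lambda))=\frac{1}{2\pi i}\oint_\Gamma f_n(\lambda,w)(wI-\Psi(\lambda))^{-1}\,dw$ along a fixed contour $\Gamma\subset\bD$ surrounding $\sigma(\Psi(\lambda))$: the integrands are bounded by $C\sup_{w\in\Gamma}\|(wI-\Psi(\lambda))^{-1}\|$ independently of $n$ and tend to $0$ pointwise on $\Gamma$, so dominated convergence on the compact contour forces the integral to $0$. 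Combining convergence on the dense span with the uniform bound gives $M_{f_n\circ\Delta_\Psi}^*\to0$ in the strong operator topology by the standard approximation argument. The main obstacle I anticipate is the bookkeeping around the functional calculus of $\Psi(\lambda)$: one must secure both the \emph{uniform-in-$\lambda$} contractive bound $\|f(\lambda,\Psi(\lambda))\|\le\|f\|_\infty$, which the naive contour estimate does not provide since the resolvent blows up as $\lambda\to\partial\bD$, and the holomorphy of $\lambda\mapsto f(\lambda,\Psi(\lambda))$ needed for $M_{f\circ\Delta_\Psi}$ to be a genuine multiplication operator. The contractive bound is precisely where the purity of $\Psi$, via $\sigma(\Psi(\lambda))\subset\bD$, is essential; once these well-definedness points are settled, the convergence itself is a routine dominated-convergence argument.
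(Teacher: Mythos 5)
Your proof is correct and follows essentially the same route as the paper's: part (i) is the same elementary symbol computation (the paper verifies it on adjoints of kernel vectors via \eqref{Eq:reprod}), and part (ii) reduces, exactly as in the paper, to uniform boundedness plus showing $f_n(\lambda,\Psi(\lambda))\to 0$ in matrix norm for each fixed $\lambda\in\bD$, using \eqref{Eq:specPsi} and density of the vectors $k_\lambda\otimes\xi$. Your extra bookkeeping --- the contractive bound $\|f(\lambda,\Psi(\lambda))\|\le\|f\|_{H^\infty(\bD^2)}$ via the c.n.u.\ functional calculus, the holomorphy of $\lambda\mapsto f(\lambda,\Psi(\lambda))$, and the contour-integral dominated-convergence step --- only spells out details the paper leaves implicit (it asserts without proof that $(M_{f_n\circ\Delta_\Psi})$ is bounded and that $g_n(\Psi(\lambda))\to 0$ in norm), so this is a difference in presentation, not in method.
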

\begin{proof}
(i) Let $f$ be a polynomial in two variables. It is clear that $f\circ \Delta_\Psi\in H^\infty(\bD)$. Moreover, for $\lambda\in \bD$ and $\xi\in \bC^d$,  we have by virtue of \eqref{Eq:reprod} that
\begin{align*}
f(M_z\otimes I,M_\Psi)^*(k_\lambda\otimes \xi)&=k_\lambda \otimes f(\lambda,\Psi(\lambda))^*\xi=k_\lambda \otimes (f\circ \Delta_\Psi)(\lambda)^* \xi\\
&=M_{f\circ \Delta_\Psi}^*(k_\lambda\otimes \xi).
\end{align*}
By density, we conclude that $f(M_z\otimes I,M_\Psi)=M_{f\circ \Delta_\Psi}.$

(ii) The sequence $(f_n)$ is bounded in norm, and thus so is $(M_{f_n\circ \Delta_\Psi})$. Another density argument then reveals that it suffices to check that 
\[
\lim_{n\to\infty}\|M^*_{f_n\circ \Delta_\Psi} (k_\lambda\otimes \xi)\| =0
\]
for every $\lambda\in \bD$ and $\xi\in \bC^d$. By the calculation in the previous paragraph, this is equivalent to
\[
\lim_{n\to\infty}\|f_n(\lambda,\Psi(\lambda))^* \xi\| =0
\]
for each $\lambda\in \bD$. Fix $\lambda\in \bD$ and consider $g_n\in H^\infty(\bD)$ defined as
\[
g_n(z)=f_n(\lambda,z), \quad z\in \bD.
\]
By assumption on $(f_n)$, the sequence $(g_n)$ converges to $0$ in the weak-$*$ topology of $H^\infty(\bD)$. Furthermore, the spectrum of the matrix $\Psi(\lambda)$ is contained in $\bD$ by \eqref{Eq:specPsi}. Hence, the sequence of matrices $(g_n(\Psi(\lambda)))$ converges to $0$ in norm, and we find
\[
\lim_{n\to\infty}\|f_n(\lambda,\Psi(\lambda))^* \xi\| =\lim_{n\to\infty}\|g_n(\Psi(\lambda))^* \xi\|=0
\]
as required.
\end{proof}
In light of this lemma, we will use the notation $f(M_z\otimes I,M_\Psi)=M_{f\circ \Delta_\Psi}$ for $f\in H^\infty(\bD^2)$.
We may  define a unital, completely contractive, weak-$*$ continuous homomorphism $\rho:H^\infty(\bD^2)\to B(H)$ as
\begin{equation}\label{Eq:functcalcT}
\rho(f)=J^* f(M_z\otimes I,M_\Psi)J, \quad f\in H^\infty(\bD^2).
\end{equation}
We typically write $\rho(f)=f(T_1,T_2)$ for each $f\in H^\infty(\bD^2)$.  Note that this agrees with the usual definition when $f$ is a polynomial, by virtue of \eqref{Eq:functcalcpoly}. By approximation, it is readily seen that
\begin{equation}\label{Eq:functcalcTcoext}
Jf(T_1,T_2)^*=f(M_z\otimes I,M_\Psi)^* J, \quad f\in H^\infty(\bD^2).
\end{equation}

\subsection{The distinguished variety}
We now arrive at a crucial step in our investigation. In the following statement, we retain the foregoing notation and assumptions.

\begin{theorem}\label{T:distvarspec}
Let
\[
V_\Psi=\{(z,w)\in \bD^2: \det(\Psi(z)-wI)=0\}.
\] 
Then, the following statements hold.
\begin{enumerate}[{\rm (i)}]
\item $V_\Psi$ is an Agler--McCarthy distinguished variety for the pairs $(M_z\otimes I,M_\Psi)$ and $(T_1,T_2)$.
\item Let $\delta\in H^\infty(\bD^2)$ be defined as
\[
\delta(z,w)=\det(\Psi(z)-w I), \quad (z,w)\in \bD^2.
\]
Then, $\delta(M_z\otimes I, M_\Psi)=0$ and $\delta(T_1,T_2)=0$.
\item There is a polynomial $p$ of two variables such that 
\[\ol{V_\Psi}=\{(z,w)\in \ol{\bD}^2: p(z,w)=0\},\]
$p(M_z\otimes I, M_\Psi)=0$ and $p(T_1,T_2)=0$.

\item  We have that $\ol{V_\Psi}=\sigma(M_z\otimes I,M_\Psi)$.
\end{enumerate}
\end{theorem}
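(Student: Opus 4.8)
The plan is to establish the two inclusions separately, with the nontrivial direction resting on a joint eigenvector computation for the adjoint pair.

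For the inclusion $\sigma(M_z\otimes I,M_\Psi)\subset \ol{V_\Psi}$, I would simply rerun the argument from the proof of Theorem \ref{T:AMratspectral}. Part (iii) supplies a polynomial $p$ with $p(M_z\otimes I,M_\Psi)=0$ and $\ol{V_\Psi}=\{(z,w)\in\ol{\bD}^2:p(z,w)=0\}$. By the spectral mapping theorem \cite[Theorem 7.7]{muller2007}, $p$ carries $\sigma(M_z\otimes I,M_\Psi)$ into $\sigma(p(M_z\otimes I,M_\Psi))=\{0\}$, so the Taylor spectrum lies in the zero set of $p$. Since $M_z\otimes I$ and $M_\Psi$ are isometries (the latter because $\Psi$ is inner), the projection property of the Taylor spectrum forces $\sigma(M_z\otimes I,M_\Psi)\subset \sigma(M_z\otimes I)\times\sigma(M_\Psi)\subset\ol{\bD}^2$. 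Intersecting the two conclusions gives the inclusion.

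For the reverse inclusion, I would first prove $V_\Psi\subset \sigma(M_z\otimes I,M_\Psi)$ by exhibiting joint eigenvectors for the adjoints. Fix $(\lambda,\mu)\in V_\Psi$; then $\mu\in\sigma(\Psi(\lambda))$, so $\ol\mu$ is an eigenvalue of $\Psi(\lambda)^*$, and I choose $\xi\neq 0$ with $\Psi(\lambda)^*\xi=\ol\mu\,\xi$. Using \eqref{Eq:reprod} together with $M_z^*k_\lambda=\ol\lambda k_\lambda$, one checks
\[
(M_z\otimes I)^*(k_\lambda\otimes\xi)=\ol\lambda\,(k_\lambda\otimes\xi),\qquad M_\Psi^*(k_\lambda\otimes\xi)=k_\lambda\otimes\Psi(\lambda)^*\xi=\ol\mu\,(k_\lambda\otimes\xi),
\]
so that $k_\lambda\otimes\xi$ is a common eigenvector of $(M_z\otimes I)^*$ and $M_\Psi^*$ with joint eigenvalue $(\ol\lambda,\ol\mu)$. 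A common eigenvalue lies in the Taylor spectrum, since the first differential of the associated Koszul complex then fails to be injective; hence $(\ol\lambda,\ol\mu)\in\sigma((M_z\otimes I)^*,M_\Psi^*)$. By the conjugate symmetry of the Taylor spectrum under adjoints, $\sigma((M_z\otimes I)^*,M_\Psi^*)=\{(\ol{z},\ol{w}):(z,w)\in\sigma(M_z\otimes I,M_\Psi)\}$, and therefore $(\lambda,\mu)\in\sigma(M_z\otimes I,M_\Psi)$.

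Finally, because the Taylor spectrum is compact, taking closures in $V_\Psi\subset\sigma(M_z\otimes I,M_\Psi)$ yields $\ol{V_\Psi}\subset\sigma(M_z\otimes I,M_\Psi)$, which combined with the first inclusion gives the claimed equality. I expect the main obstacle to be the bookkeeping in the third paragraph: one must pass correctly from the common eigenvector of the adjoints to membership of $(\lambda,\mu)$ in the Taylor spectrum, which requires both that joint eigenvalues belong to the Taylor spectrum and that the Taylor spectrum behaves covariantly under adjoints. These are standard facts \cite{muller2007}, but they are the genuine content of the argument, the first inclusion being little more than a repetition of Theorem \ref{T:AMratspectral}.
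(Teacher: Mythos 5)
Your argument for item (iv) is correct and is essentially the paper's own proof: the forward inclusion via the spectral mapping theorem applied to the annihilating polynomial $p$ from (iii), and the reverse inclusion by producing the joint eigenvector $k_\lambda\otimes\xi$ of $((M_z\otimes I)^*,M_\Psi^*)$ with joint eigenvalue $(\ol{\lambda},\ol{\mu})$ and then passing back through the Taylor spectrum of the adjoints. Where the paper cites \cite[Corollary 9.14 and Remark 25.2]{muller2007} for the last step, you unwind the same facts by hand (non-injectivity of the first Koszul differential, conjugate symmetry under adjoints), and your explicit appeal to the projection property to place $\sigma(M_z\otimes I,M_\Psi)$ inside $\ol{\bD}^2$ makes precise a point the paper leaves implicit when it speaks of the zero set of $p$ \emph{inside} $\ol{\bD}^2$. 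These are cosmetic differences, not a different route.

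The genuine gap is that the statement comprises four items and you prove only the fourth, while explicitly assuming the third. Part (iii) is not free: in the paper it rests on Lemma \ref{L:polyconv} (which shows that $\{(z,w)\in\ol{\bD}^2:\det(\Psi(z)-wI)=0\}$ equals $\ol{V_\Psi}$ and is cut out by a polynomial $p$ on $\ol{\bD}^2$ --- the closure computation there uses that $V_\Psi$ is distinguished, via \eqref{Eq:specPsi}) and on part (i), since the vanishing $p(M_z\otimes I,M_\Psi)=0$ is deduced from the Agler--McCarthy inequality \eqref{Eq:AM} applied with $\|p\|_{V_\Psi}=0$. Part (i) in turn is obtained from the estimates of \cite{AM2005} and \cite{DS2017} and transferred to $(T_1,T_2)$ by \eqref{Eq:functcalcpoly}, and part (ii) is a separate pointwise Cayley--Hamilton argument showing $\delta(M_z\otimes I,M_\Psi)=0$, which is needed elsewhere (e.g.\ in Theorem \ref{T:ZAnn}) and does not follow from (iii) since $\delta$ is only in $H^\infty(\bD^2)$, not a polynomial. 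None of this chain appears in your attempt, so as a proof of the theorem as stated it is incomplete: you have supplied a correct proof of (iv) conditional on (i)--(iii), but (i)--(iii) must themselves be established.
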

\begin{proof}
(i) The fact that  $V_\Psi$ is an Agler--McCarthy distinguished variety for $(M_z\otimes I,M_\Psi)$ follows from standard estimates; see the proofs of \cite[Theorem 3.1]{AM2005} and \cite[Theorem 4.3]{DS2017}. The corresponding statement for $(T_1,T_2)$ is then a consequence \eqref{Eq:functcalcpoly}.

(ii) There are functions $\psi_0,\ldots,\psi_d\in H^\infty(\bD)$ such that
\[
\delta(z,w)=\sum_{n=0}^d \psi_n(z)w^d, \quad (z,w)\in \bD^2.
\]
For each fixed $z\in \bD$, the Cayley--Hamilton theorem implies that
\[
\sum_{n=0}^d \psi_n(z)\Psi(z)^d=0
\]
so that $\delta(M_z\otimes I, M_\Psi)=0$. By \eqref{Eq:functcalcT} we also obtain $\delta(T_1,T_2)=0$.

(iii) By Lemma \ref{L:polyconv}, there is a polynomial $p$ of two variables such that
\[
V_\Psi=\{(z,w)\in \bD^2:p(z,w)=0\} \qand \ol{V_\Psi}=\{(z,w)\in \ol{\bD}^2: p(z,w)=0\}.
\]
Combining (i) with \eqref{Eq:AM}, we see that 
\[
\|p(M_z\otimes I, M_\Psi)\|\leq \|p\|_{V_\Psi}=0
\]
so that $p(M_z\otimes I, M_\Psi)=0$. The fact that $p(T_1,T_2)=0$ then follows immediately from \eqref{Eq:functcalcT}.

(iv)  By (iii), we know that $p(M_z\otimes I,M_\Psi)=0$. It then follows from the spectral mapping theorem \cite[Theorem 7.7]{muller2007} that $\sigma(M_z\otimes I,M_\Psi)$ is contained in the zero set of $p$ inside of $\ol{\bD}^2$, that is $\ol{V_\Psi}$. Conversely, let $(\lambda,\mu)\in V_\Psi$. This means that there is a unit vector $\xi\in \bC^d$ such that $k_\lambda\otimes \xi$ is a joint eigenvector of $((M_z\otimes I)^*,M^*_\Psi)$ with joint eigenvalue $(\ol{\lambda},\ol{\mu})$. By  \cite[Corollary 9.14 and Remark 25.2]{muller2007}, we conclude that $(\lambda,\mu)\in \sigma(M_z\otimes I,M_\Psi)$. Hence, $V_\Psi\subset \sigma(M_z\otimes I,M_\Psi)$ so $\ol{V_\Psi}\subset \sigma(M_z\otimes I,M_\Psi)$ as well.
\end{proof}

Although we do not require it for our work below, we emphasize here that there is a somewhat explicit procedure for constructing $\Psi$ from $T_1$ and $T_2$. Hence, it is reasonable to ask if the variety $V_\Psi$, or perhaps some part thereof, can be identified as some canonical set associated to the pair $(T_1,T_2)$.

\subsection{Zero sets and annihilators}

The \emph{annihilator} of the pair $(T_1,T_2)$ is  defined as
 \[\Ann(T_1,T_2)=\{f\in H^\infty(\bD^2):f(T_1,T_2)=0\}.\]  
 This is easily seen to be a weak-$*$ closed ideal in $H^\infty(\bD^2)$. We seek to determine the precise relationship between $V_\Psi$ and $Z(\Ann(T_1,T_2))$, where for a subset $F\subset H^\infty(\bD^2)$, we write $Z(F)=\{(z,w)\in\bD^2:f(z,w)=0 \text{ for every } f\in F \}$. 

The starting point of our investigation is the following immediate consequence of Theorem  \ref{T:distvarspec}(ii).
\begin{equation}\label{Eq:ZannV}
Z(\Ann(T_1,T_2))\subset V_\Psi.
\end{equation}
Typically, $Z(\Ann(T_1,T_2))$ is much smaller than any spectral set, as can be seen upon considering the pair $\begin{bmatrix} 0 & 1 \\ 0 & 0 \end{bmatrix}$ and $\begin{bmatrix} 0 & 0 \\ 0 & 0 \end{bmatrix}$. We will show below that, under appropriate assumptions, $Z(\Ann(T_1,T_2))$ coincides with a certain natural subset of $V_\Psi$, defined by spectral conditions.

Retaining the notation used in Subsection \ref{SS:coext}, consider 
\[
K_\Psi=\bigcap_{f\in \Ann(T_1,T_2)} \ker f(M_z\otimes I,M_\Psi)^*.
\]
Note that for each $f\in \Ann(T_1,T_2)$, we have by virtue of \eqref{Eq:functcalcTcoext} that
\[
f(M_z\otimes I,M_\Psi)^*Jh=Jf(T_1,T_2)^*h=0, \quad h\in H
\]
which shows that 
\begin{equation}\label{Eq:JH}
JH\subset K_\Psi. 
\end{equation}
Moreover, it is clear by construction that $K_\Psi$ is co-invariant for $M_z\otimes I$ and $M_\Psi$. 

Define 
\begin{equation}\label{Eq:S}
S^\Psi_1=P_{K_\Psi} (M_z\otimes I)|_{K_\Psi} \qand S^\Psi_2 = P_{K_\Psi} M_\Psi|_{K_\Psi}
\end{equation}
so that
\begin{equation}\label{Eq:S*}
S^{\Psi*}_1=(M_z\otimes I)^*|_{K_\Psi} \qand S_2^{\Psi *}=M^*_\Psi|_{K_\Psi}.
\end{equation}
In particular, we conclude that $S_\Psi=(S^\Psi_1,S^\Psi_2)$ is a pair of commuting contractions. For ease of reference, we call  $(S_\Psi,K_\Psi)$ the \emph{constrained isometric co-extension of $T$ corresponding to $\Psi$}.

Much like we did  in \eqref{Eq:functcalcTcoext}, we can define an $H^\infty(\bD^2)$-functional calculus for the pair $S_\Psi$, by setting
\begin{equation}\label{Eq:functcalcS}
f(S^\Psi_1,S^\Psi_2)^*=f(M_z\otimes I,M_\Psi)^*|_{K_\Psi}, \quad f\in H^\infty(\bD^2).
\end{equation}

We now want to establish some useful properties of $K_\Psi$, using the following general fact. 

\begin{proposition}\label{P:equaldefect}
Let $E\subset \bC^d$ be a subspace and let $\Theta\in H^\infty(\bD; B(E,\bC^d))$ be an inner function. Put $L=(\ran M_\Theta)^\perp$ and let $S=P_L (M_z\otimes I)|_L$. Then, the following statements hold.
\begin{enumerate}[{\rm (i)}]
\item If $\Ann(S)$ is non-zero, then $E=\bC^d$.
\item If $E=\bC^d$, then the function $\det \Theta$ is non-zero and it generates $\Ann(S)$.
\end{enumerate}
\end{proposition}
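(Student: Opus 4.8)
The common engine for both parts is a reformulation of $\Ann(S)$ via semi-invariance together with the factorization \eqref{Eq:Leech}. Since $L=(\ran M_\Theta)^\perp$ is co-invariant for $M_z\otimes I$, the compression $S$ satisfies $S^n=P_L(M_{z^n}\otimes I)|_L$, and hence $f(S)=P_L(M_f\otimes I)|_L$ for every $f\in H^\infty(\bD)$ by weak-$*$ continuity. Thus $f\in\Ann(S)$ if and only if $(M_f\otimes I)L\subseteq\ran M_\Theta$; because $\ran M_\Theta$ is invariant for $M_z\otimes I$, this is equivalent to $\ran M_{fI}\subseteq\ran M_\Theta$. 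Applying \eqref{Eq:Leech}, the plan is to record the clean criterion
\[
f\in\Ann(S)\iff fI_{\bC^d}=\Theta\Gamma\ \text{ for some }\ \Gamma\in H^\infty(\bD;B(\bC^d,E)),
\]
which drives everything below.

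For part (i) I would argue on the boundary. Suppose $f\in\Ann(S)$ is non-zero and write $fI_{\bC^d}=\Theta\Gamma$. For almost every $\zeta\in\bT$ the inner function $\Theta$ induces an isometry $\Theta(\zeta):E\to\bC^d$, so the range of $f(\zeta)I_{\bC^d}=\Theta(\zeta)\Gamma(\zeta)$ has dimension at most $\dim E$. On the other hand, a non-zero bounded holomorphic function has non-vanishing boundary values almost everywhere, so $f(\zeta)\neq 0$ and $f(\zeta)I_{\bC^d}$ is invertible for a.e.\ $\zeta$. Comparing dimensions forces $\dim E\geq d$, whence $E=\bC^d$.

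For part (ii) I assume $E=\bC^d$, so $\Theta$ is square. First, $\det\Theta\in H^\infty(\bD)$, and since $\Theta(\zeta)$ is unitary for a.e.\ $\zeta\in\bT$ we get $|\det\Theta(\zeta)|=1$ a.e.; hence $\det\Theta$ is inner and in particular non-zero. Membership $\det\Theta\in\Ann(S)$ is then immediate from the adjugate identity $\Theta\,\operatorname{adj}\Theta=(\det\Theta)I$, since $\operatorname{adj}\Theta\in H^\infty(\bD;B(\bC^d))$ exhibits the factorization $(\det\Theta)I=\Theta\,\operatorname{adj}\Theta$ demanded by the criterion above. The remaining, and in my view principal, obstacle is the reverse inclusion $\Ann(S)\subseteq(\det\Theta)H^\infty(\bD)$, i.e.\ that $\det\Theta$ actually \emph{generates}. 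The naive route — taking determinants in $fI=\Theta\Gamma$ to obtain $f^d=(\det\Theta)(\det\Gamma)$ — only yields $\det\Theta\mid f^d$, which is strictly weaker than $\det\Theta\mid f$; the delicacy is already visible for $\Theta=zI_d$, where $\det\Theta=z^d$ but every $f$ with $f(0)=0$ annihilates $S$. The honest statement is that $\Ann(S)$ is a non-zero weak-$*$ closed (hence principal) ideal with inner generator $m$ satisfying only $m\mid\det\Theta\mid m^d$, so the crux is to show $m=\det\Theta$ — equivalently that $\operatorname{adj}\Theta$ shares no inner factor with $\det\Theta$. I would therefore look to the specific provenance of $\Theta$ (the Beurling--Lax representation of $K_\Psi^\perp$, together with the purity and finite-defect hypotheses) to supply this reducedness, rather than expect it from inner-function generalities alone.
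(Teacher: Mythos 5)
Your reduction of membership in $\Ann(S)$ to the factorization criterion $fI_{\bC^d}=\Theta\Gamma$ is exactly the paper's mechanism (co-invariance of $L$ plus \eqref{Eq:Leech}), and your part (i) is correct, though the paper argues more simply at an interior point: picking $z_0\in\bD$ with $f(z_0)\neq 0$, the identity $f(z_0)I=\Theta(z_0)\Gamma(z_0)$ forces $\Theta(z_0)$ to be surjective, hence $E=\bC^d$, with no appeal to boundary values. Your treatment of the first half of (ii) also matches the paper's: non-vanishing of $\det\Theta$ from a unitary boundary value of $\Theta$, and $\det\Theta\in\Ann(S)$ via the adjugate identity $\Theta\operatorname{adj}\Theta=(\det\Theta)I$.

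Where you stopped is precisely where the paper goes wrong, and your scepticism is vindicated: the paper's entire proof that $\Ann(S)\subset(\det\Theta)H^\infty(\bD)$ is the sentence ``Taking determinants we find $\phi\in\alpha H^\infty(\bD)$,'' which is the non sequitur you flagged --- determinants in $\phi I=\Theta\Gamma$ yield $\phi^d=(\det\Theta)(\det\Gamma)$, i.e.\ only $\det\Theta\mid\phi^d$. Your counterexample is decisive: $\Theta(z)=zI_{\bC^d}$ is square inner (indeed it is the characteristic function of $S=0$ on $L=\bC^d$), yet $\Ann(S)=zH^\infty(\bD)$ while $\det\Theta=z^d$, so statement (ii) is false as written for $d\geq 2$. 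Your ``honest statement'' is the correct one: the inner generator $m$ of $\Ann(S)$ satisfies $m\mid\det\Theta\mid m^d$, and $m\doteq\det\Theta$ exactly when $S$ is multiplicity-free (by the Jordan model, $\det\Theta$ is the product of all the invariant factors of $S$, of which $m$ is the largest); already $\Theta=\operatorname{diag}(\theta_1,\theta_2)$ with $\theta_1,\theta_2$ sharing an inner factor gives $\Ann(S)=(\theta_1\vee\theta_2)H^\infty(\bD)\neq\theta_1\theta_2 H^\infty(\bD)$. Your closing hope that the provenance of $\Theta$ in Corollary \ref{C:detK} supplies the missing reducedness is, unfortunately, not realized either: taking $T_1=T_2=0$ on $\bC^2$ and $\Psi(z)=zI_2$ in the paper's construction gives $f(T_1,T_2)=f(0,0)I$, so $K_\Psi$ is the space of constants, $\Theta(z)=zI_2$, and $\Ann(S_1^\Psi)=\Ann(T_1)=zH^\infty(\bD)\neq z^2H^\infty(\bD)$. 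So the flaw is not a gap in your write-up but an error in the paper, and it propagates to Corollary \ref{C:detK}(iii) and to the step of Theorem \ref{T:specsynth} ((iv)$\Rightarrow$(i)) where $\det\Theta$ is asserted to be a Blaschke product with simple roots.
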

\begin{proof}
(i) Let $\phi\in \Ann(S)$ be non-zero. Then
\[
0=\phi(S)^*=(M_\phi\otimes I)^*|_L
\]
so that $L\subset \ker (M_\phi\otimes I)^*$ or $\ran (M_\phi\otimes I)\subset \ran M_\Theta$ (note that $M_\Theta$ has closed range since it is an isometry). By \eqref{Eq:Leech}, there is $\Gamma\in H^\infty(\bD; B(\bC^d,E))$ such that $ \phi\otimes I=\Theta\Gamma$ on $\bD$. Since $\phi$ is non-zero, there is $z\in \bD$ such that $\phi(z)\neq 0$. The fact that $\phi(z)\otimes I=\Theta(z)\Gamma(z)$ implies in particular that $\Theta(z)$ is surjective, so that $\bC^d=E$ as desired.

(ii) By assumption, $\Theta(z)$ is a square matrix for each $z\in \bD$, so we may define $\alpha\in H^\infty(\bD)$ as $\alpha(z)=\det \Theta(z)$ for each $z\in \bD$. Since $\Theta$ is inner, there is $\zeta\in \bT$ such that $\Theta(\zeta)$ is a unitary matrix \cite[Proposition V.2.2]{nagy2010}, so that $\alpha$ is not the zero function. Next, let $\phi\in H^\infty(\bD)$ such that $\phi(S)=0$. As above, this is equivalent to the equality $\phi\otimes I=\Theta\Gamma$ for some $\Gamma\in H^\infty(\bD; B(\bC^d))$. Taking determinants we find $\phi\in \alpha H^\infty(\bD)$. 

Conversely, let $\Gamma:\bD\to B(\bC^d)$ be such that $\Gamma(z)$ is the algebraic adjoint of $\Theta(z)$ for each $z\in \bD$. Then, $\Gamma\in H^\infty(\bD;B(\bC^d))$ and $\alpha \otimes I=\Theta\Gamma$. This shows that $$\ran \alpha (M_z \otimes I)\subset \ran M_\Theta=L^\perp,$$ so that $\alpha(S)=0$. We have thus shown that $\Ann(S)= \alpha H^\infty(\bD)$.
\end{proof}

We can now record the required properties of $K_\Psi$, by specializing the previous result.

\begin{corollary}\label{C:detK}
There is a subspace $E\subset \bC^d$ and an inner function $$\Theta\in H^\infty(\bD;B(E,\bC^d))$$ with the following properties.
\begin{enumerate}[{\rm (i)}]
\item $K_\Psi=(\ran M_\Theta)^\perp$.
\item Given $\xi\in \bC^d$ and $\lambda\in \bD$, the vector $k_\lambda\otimes \xi$ belongs to $K_\Psi$ if and only if $\Theta(\lambda)^*\xi=0$.
\item If $\Ann(S^\Psi_1)$ is non-zero, then $\bC^d=E$ and $\Ann(S^\Psi_1)$ is generated by  $\det \Theta$.
\end{enumerate}
\end{corollary}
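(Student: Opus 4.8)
The plan is to realize $K_\Psi$ as the orthogonal complement of a Beurling--Lax--Halmos invariant subspace, read off (i) and (ii) directly from that description, and obtain (iii) by specializing the already-proved Proposition~\ref{P:equaldefect}.

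First I would invoke the fact, noted when $K_\Psi$ was introduced, that $K_\Psi$ is co-invariant for $M_z\otimes I$; equivalently, $K_\Psi^\perp$ is invariant under $M_z\otimes I$. Applying the vector-valued Beurling--Lax--Halmos theorem on $H^2\otimes\bC^d$ then produces a Hilbert space $E$ and an inner function $\Theta\in H^\infty(\bD;B(E,\bC^d))$ with $K_\Psi^\perp=\ran M_\Theta$. Since $\Theta$ is inner, its boundary values $\Theta(\zeta)\colon E\to\bC^d$ are isometric for almost every $\zeta\in\bT$, which forces $\dim E\leq d$; I may therefore take $E$ to be a subspace of $\bC^d$. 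Because $(\ran M_\Theta)^\perp=\ker M_\Theta^*$, this gives $K_\Psi=(\ran M_\Theta)^\perp$ and hence assertion (i).

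Assertion (ii) would follow at once from the reproducing-kernel identity \eqref{Eq:reprod}: for $\xi\in\bC^d$ and $\lambda\in\bD$ we have $M_\Theta^*(k_\lambda\otimes\xi)=k_\lambda\otimes\Theta(\lambda)^*\xi$, and because $k_\lambda\neq 0$, the vector $k_\lambda\otimes\xi$ lies in $K_\Psi=\ker M_\Theta^*$ precisely when $\Theta(\lambda)^*\xi=0$.

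For (iii) I would observe that the pair $(E,\Theta)$ just constructed places us exactly in the setting of Proposition~\ref{P:equaldefect}, with $L=(\ran M_\Theta)^\perp=K_\Psi$. By \eqref{Eq:S}, the operator $S=P_L(M_z\otimes I)|_L$ coincides with $S^\Psi_1$, so $\Ann(S)=\Ann(S^\Psi_1)$. If $\Ann(S^\Psi_1)$ is non-zero, then Proposition~\ref{P:equaldefect}(i) forces $E=\bC^d$, and Proposition~\ref{P:equaldefect}(ii) then shows that $\det\Theta$ is non-zero and generates $\Ann(S^\Psi_1)$, which is assertion (iii). I expect the only step requiring genuine care to be the Beurling--Lax--Halmos reduction---in particular the multiplicity bound $\dim E\leq d$ that allows $E$ to be realized as a subspace of $\bC^d$, as demanded by Proposition~\ref{P:equaldefect}; the rest is either immediate bookkeeping or a direct appeal to that proposition.
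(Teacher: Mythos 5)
Your proposal is correct and follows essentially the same route as the paper: co-invariance of $K_\Psi$ under $M_z\otimes I$, the Beurling--Lax--Halmos theorem to write $K_\Psi=(\ran M_\Theta)^\perp$, the reproducing-kernel identity \eqref{Eq:reprod} for (ii), and Proposition \ref{P:equaldefect} with $L=K_\Psi$, $S=S^\Psi_1$ for (iii). The only difference is that you spell out the multiplicity bound $\dim E\leq d$ via the a.e.\ isometric boundary values of $\Theta$, a detail the paper absorbs into its citation of Beurling--Lax--Halmos.
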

\begin{proof}
Since $K_\Psi$ is co-invariant for $M_z\otimes I$, by the Beurling--Lax--Halmos theorem there is a subspace $E\subset \bC^d$ and an inner function $\Theta\in H^\infty(\bD;B(E,\bC^d))$ such that $K_\Psi=\ker M_\Theta^*=(\ran M_\Theta)^\perp$, which gives (i). Property (ii) then follows immediately from \eqref{Eq:reprod}, while (iii) follows from Proposition \ref{P:equaldefect}.
\end{proof}

Another key property of $(S^\Psi_1,S^\Psi_2)$ is the following.

\begin{lemma}\label{L:AnnS}
We have $\Ann(S^\Psi_1,S^\Psi_2)=\Ann(T_1,T_2)$ and $\Ann(S^\Psi_1)=\Ann(T_1)$.
\end{lemma}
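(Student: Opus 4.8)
The plan is to reduce both equalities to three facts already in place: the definition of $K_\Psi$ as the common kernel $\bigcap_{f\in\Ann(T_1,T_2)}\ker f(M_z\otimes I,M_\Psi)^*$; the inclusion $JH\subset K_\Psi$ from \eqref{Eq:JH}; and the intertwining relation \eqref{Eq:functcalcTcoext} together with the definition \eqref{Eq:functcalcS} of the functional calculus for $(S_1^\Psi,S_2^\Psi)$. Since $J$ is an isometry (hence injective) and $(S_1^{\Psi*},S_2^{\Psi*})$ acts as the restriction of $((M_z\otimes I)^*,M_\Psi^*)$ to the co-invariant subspace $K_\Psi$, both identities will fall out by tracking kernels, with no estimates needed.

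First I would dispatch the pair statement. For $\Ann(T_1,T_2)\subset\Ann(S_1^\Psi,S_2^\Psi)$, take $f\in\Ann(T_1,T_2)$: the definition of $K_\Psi$ gives $K_\Psi\subset\ker f(M_z\otimes I,M_\Psi)^*$, so by \eqref{Eq:functcalcS} we get $f(S_1^\Psi,S_2^\Psi)^*=f(M_z\otimes I,M_\Psi)^*|_{K_\Psi}=0$. For the reverse inclusion, take $f\in\Ann(S_1^\Psi,S_2^\Psi)$, so $f(M_z\otimes I,M_\Psi)^*|_{K_\Psi}=0$; since $JH\subset K_\Psi$ this forces $f(M_z\otimes I,M_\Psi)^*J=0$, which by \eqref{Eq:functcalcTcoext} reads $Jf(T_1,T_2)^*=0$, and injectivity of $J$ yields $f(T_1,T_2)^*=0$. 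This proves $\Ann(S_1^\Psi,S_2^\Psi)=\Ann(T_1,T_2)$.

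For the scalar equality I would embed $H^\infty(\bD)$ into $H^\infty(\bD^2)$ via $\phi\mapsto\td\phi$, $\td\phi(z,w)=\phi(z)$, and use that $\td\phi\circ\Delta_\Psi=\phi$, so that $\td\phi(M_z\otimes I,M_\Psi)=M_\phi\otimes I$ by the notation fixed after Lemma \ref{L:weak*}. This identifies $\Ann(T_1)$ and $\Ann(S_1^\Psi)$ with the slices $\{\phi:\td\phi\in\Ann(T_1,T_2)\}$ and $\{\phi:\td\phi\in\Ann(S_1^\Psi,S_2^\Psi)\}$ respectively, so the scalar equality is immediate from the pair case.

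The only delicate bookkeeping is the consistency of the scalar and bivariate functional calculi, namely $\phi(T_1)=\td\phi(T_1,T_2)$ and $\phi(S_1^\Psi)=\td\phi(S_1^\Psi,S_2^\Psi)$. For polynomials these are clear, and since the bivariate calculus \eqref{Eq:functcalcT} is weak-$*$ continuous while $S_1^\Psi$ inherits purity from $M_z\otimes I$ (so that it carries a weak-$*$ continuous $C_{\cdot 0}$-calculus), the identities extend by bounded pointwise approximation; alternatively one reads $\phi(S_1^\Psi)^*=(M_\phi\otimes I)^*|_{K_\Psi}$ off \eqref{Eq:functcalcS} and \eqref{Eq:reprod} directly, using that $K_\Psi$ is co-invariant. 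With this in hand the argument is complete.
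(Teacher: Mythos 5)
Your proposal is correct and follows essentially the same route as the paper: both directions of the pair statement are handled via the definition of $K_\Psi$, the inclusion $JH\subset K_\Psi$, and the intertwining relation \eqref{Eq:functcalcTcoext}, and the scalar equality is deduced from the pair case through the embedding $\phi\mapsto\td\phi$ with $\td\phi(z,w)=\phi(z)$, exactly as in the paper's proof. Your extra paragraph verifying the consistency $\phi(S_1^\Psi)=\td\phi(S_1^\Psi,S_2^\Psi)$ via $\td\phi\circ\Delta_\Psi=\phi$ is a point the paper leaves implicit, and it is a harmless (indeed welcome) addition.
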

\begin{proof}
For $f\in H^\infty(\bD^2)$, by \eqref{Eq:functcalcTcoext} and \eqref{Eq:functcalcS} we have
\[Jf(T_1,T_2)^*=f(M_z\otimes I,M_\Psi)^* J=f(S^\Psi_1,S^\Psi_2)^* J
\]
where we used that $JH\subset K_\Psi$ by \eqref{Eq:JH}. In particular, this shows that $f(T_1,T_2)=0$ if $f(S^\Psi_1,S^\Psi_2)=0$, so that $\Ann(S^\Psi_1,S^\Psi_2)\subset \Ann(T_1,T_2)$. Conversely, let $f\in \Ann(T_1,T_2)$. Then, $K_\Psi\subset \ker f(M_z\otimes I,M_\Psi)^*$ by construction, so that
\[
f(S^\Psi_1,S^\Psi_2)^*=f(M_z\otimes I,M_\Psi)^*|_{K_\Psi}=0
\]
by \eqref{Eq:functcalcS}. We conclude that $f\in\Ann(S^\Psi_1,S^\Psi_2)$. We have shown that $\Ann(S^\Psi_1,S^\Psi_2)=\Ann(T_1,T_2)$.

Finally, let $g\in H^\infty(\bD)$. Define $f\in H^\infty(\bD^2)$ as
\[
f(z,w)=g(z),\quad (z,w)\in \bD^2.
\]
Then $g\in \Ann(T_1)$ is equivalent to $f\in \Ann(T_1,T_2)$, and likewise $g\in \Ann(S^\Psi_1)$ is equivalent to $f\in \Ann(S^\Psi_1,S^\Psi_2)$. By the first part of the proof, we conclude that $\Ann(S^\Psi_1)=\Ann(T_1)$.
\end{proof}

Next, we define $\Omega_\Psi\subset \bC^2$ to be the set of points $(\lambda,\mu)\in \bC^2$ such that $(\ol{\lambda},\ol{\mu})$ is a joint eigenvalue of $(S_1^{\Psi*},S_2^{\Psi*})$. This set has the following remarkable property, which is one of the main results of this section.

\begin{theorem}\label{T:ZAnn}
Let $(T_1,T_2)$ be a pair of pure commuting contractions on some Hilbert space, both with finite defect.  Let $(S_\Psi,K_\Psi)$ be the constrained isometric co-extension of $(T_1,T_2)$ corresponding to some rational matrix-valued pure inner function $\Psi\in H^\infty(\bD;B(\bC^d)$. Then, we have $Z(\Ann(T_1,T_2))=\Omega_\Psi$.
\end{theorem}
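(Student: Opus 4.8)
The plan is to prove both inclusions by exhibiting, for each relevant point $(\lambda,\mu)$, a concrete joint eigenvector of the form $k_\lambda\otimes\xi$, and then translating membership in $K_\Psi$ into the vanishing of the annihilator at $(\lambda,\mu)$. The engine of the whole argument is a single identity: if $\lambda,\mu\in\bD$ and $\xi\in\bC^d$ is non-zero with $\Psi(\lambda)^*\xi=\ol{\mu}\,\xi$, then for every $f\in H^\infty(\bD^2)$,
\[
f(M_z\otimes I,M_\Psi)^*(k_\lambda\otimes\xi)=\ol{f(\lambda,\mu)}\,(k_\lambda\otimes\xi).
\]
To establish this I would write $f(M_z\otimes I,M_\Psi)^*=M_{f\circ\Delta_\Psi}^*$ as in Lemma \ref{L:weak*} and apply \eqref{Eq:reprod} to obtain $M_{f\circ\Delta_\Psi}^*(k_\lambda\otimes\xi)=k_\lambda\otimes f(\lambda,\Psi(\lambda))^*\xi$; it then suffices to verify $f(\lambda,\Psi(\lambda))^*\xi=\ol{f(\lambda,\mu)}\,\xi$.

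Granting this identity, both inclusions follow quickly. For $\Omega_\Psi\subset Z(\Ann(T_1,T_2))$, take $(\lambda,\mu)\in\Omega_\Psi$, so there is a non-zero $g\in K_\Psi$ with $(M_z\otimes I)^*g=\ol{\lambda}g$ and $M_\Psi^*g=\ol{\mu}g$. By Lemma \ref{L:eigenvector} we have $\lambda\in\bD$ and $g=k_\lambda\otimes\xi$ for some non-zero $\xi$, and \eqref{Eq:reprod} forces $\Psi(\lambda)^*\xi=\ol{\mu}\xi$. Since $g\in K_\Psi$, the key identity yields $\ol{f(\lambda,\mu)}\,g=f(M_z\otimes I,M_\Psi)^*g=0$ for every $f\in\Ann(T_1,T_2)$, whence $f(\lambda,\mu)=0$ and $(\lambda,\mu)\in Z(\Ann(T_1,T_2))$. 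For the reverse inclusion, take $(\lambda,\mu)\in Z(\Ann(T_1,T_2))$; by \eqref{Eq:ZannV} it lies in $V_\Psi$, so $\det(\Psi(\lambda)-\mu I)=0$ and there is a non-zero $\xi$ with $\Psi(\lambda)^*\xi=\ol{\mu}\xi$. Setting $g=k_\lambda\otimes\xi$, the key identity gives $f(M_z\otimes I,M_\Psi)^*g=\ol{f(\lambda,\mu)}\,g=0$ for every $f\in\Ann(T_1,T_2)$, so $g\in K_\Psi$ by the very definition of $K_\Psi$. Then $g$ is a non-zero joint eigenvector of $(S_1^{\Psi*},S_2^{\Psi*})$ for the eigenvalue $(\ol{\lambda},\ol{\mu})$, that is, $(\lambda,\mu)\in\Omega_\Psi$.

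The one point requiring genuine care, and what I expect to be the main obstacle, is the identity $f(\lambda,\Psi(\lambda))^*\xi=\ol{f(\lambda,\mu)}\,\xi$ for a general $f\in H^\infty(\bD^2)$. For a polynomial $f$ it is immediate: writing $f(\lambda,w)=\sum_k c_k w^k$ and using $(\Psi(\lambda)^*)^k\xi=\ol{\mu}^k\xi$ gives $f(\lambda,\Psi(\lambda))^*\xi=\sum_k\ol{c_k}\,\ol{\mu}^k\xi=\ol{f(\lambda,\mu)}\,\xi$. For general $f$, the matrix $f(\lambda,\Psi(\lambda))$ is defined through the holomorphic functional calculus of $\Psi(\lambda)$, which is legitimate precisely because $\sigma(\Psi(\lambda))\subset\bD$ by \eqref{Eq:specPsi}; one then passes from the polynomial case to the general one by approximating $f(\lambda,\cdot)$ uniformly on a neighbourhood of $\sigma(\Psi(\lambda))$ and invoking continuity of the functional calculus, using throughout that $\ol{\mu}$ is an eigenvalue of $\Psi(\lambda)^*$ with eigenvector $\xi$. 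This spectral containment is exactly where the purity of the inner function $\Psi$ enters, and it is what makes the symmetric computation driving both inclusions go through.
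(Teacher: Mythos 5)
Your proposal is correct and takes essentially the same route as the paper's own proof: both inclusions are established via joint eigenvectors of the form $k_\lambda\otimes\xi$, with the reverse inclusion using the fact that $\delta(z,w)=\det(\Psi(z)-wI)$ lies in $\Ann(T_1,T_2)$ (equivalently, \eqref{Eq:ZannV}) to produce an eigenvector of $\Psi(\lambda)^*$ with eigenvalue $\ol{\mu}$, and then checking membership in $K_\Psi$ directly from its definition. The only difference is one of explicitness: you spell out and justify the key identity $f(\lambda,\Psi(\lambda))^*\xi=\ol{f(\lambda,\mu)}\,\xi$ for general $f\in H^\infty(\bD^2)$ via polynomial approximation and \eqref{Eq:specPsi}, a step the paper uses implicitly through its $H^\infty$-functional calculus and Lemma \ref{L:AnnS}.
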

\begin{proof}
Let $(\lambda,\mu)\in \Omega_\Psi$. By definition, this means that there is a unit vector $\xi\in K_\Psi$ such that
\[
S_1^{\Psi*}\xi=(M_z\otimes I)^* \xi=\ol{\lambda}\xi \qand S_2^{\Psi*}\xi=M_\Psi^*\xi=\ol{\mu}\xi.
\]
Let $f\in \Ann(T_1,T_2)$. Then, $f\in \Ann(S^\Psi_1,S^\Psi_2)$ by Lemma \ref{L:AnnS} so
\[
0=f(S^\Psi_1,S^\Psi_2)^*\xi=\ol{f(\lambda,\mu)}\xi
\]
which implies that $f(\lambda,\mu)=0$. We conclude that $\Omega_\Psi\subset Z(\Ann(T_1,T_2))$.

Conversely, let $(\lambda,\mu)\in Z(\Ann(T_1,T_2))$. Define $\delta\in H^\infty(\bD^2)$ as
\[
\delta(z,w)=\det(\Psi(z)-w I), \quad (z,w)\in \bD^2.
\]
Then, $\delta\in \Ann(T_1,T_2)$ by Theorem \ref{T:distvarspec}. We conclude that $\delta(\lambda,\mu)=0$, so $\ol{\mu}$ is an eigenvalue of $\Psi(\lambda)^*$. Hence, there is a unit vector $\eta\in \bC^d$ such that $\Psi(\lambda)^*\eta=\ol{\mu}\eta$. In turn, this implies that $k_\lambda\otimes \eta$ is a joint eigenvector of $M_z^*\otimes I$ and $M_\Psi^*$, with corresponding joint eigenvalue $(\ol{\lambda},\ol{\mu})$. To finish the proof, it thus suffices to verify that $k_\lambda\otimes \eta\in K_\Psi$. 
To see this, let $f\in \Ann(T_1,T_2)$. Then, 
\[
f(M_z\otimes I,M_\Psi)^*(k_\lambda\otimes \eta)=\ol{f(\lambda,\mu)} (k_\lambda \otimes \eta)=0
\]
since $(\lambda,\mu)\in Z(\Ann(T_1,T_2))$. Hence, $k_\lambda\otimes \eta$ belongs to
\[ \bigcap_{f\in \Ann(T_1,T_2)}\ker f(M_z\otimes I,M_\Psi)^*=K_\Psi.\qedhere\]
\end{proof}

For future technical use, we now describe the projection of $\Omega_\Psi$ onto the first component of $\bC^2$.

\begin{theorem}\label{T:ZAnn1}
Let $T=(T_1,T_2)$ be a pair of pure commuting contractions on some Hilbert space, both with finite defect. Let $(S_\Psi,K_\Psi)$ be the constrained isometric co-extension of $(T_1,T_2)$ corresponding to some rational matrix-valued pure inner function $\Psi\in H^\infty(\bD;B(\bC^d))$.  Let $\Omega_1\subset \bC$ denote the projection of $\Omega_\Psi$ onto the first component of $\bC^2$. If $\Ann(T_1)$ is non-zero,  then $\Omega_1=Z(\Ann(T_1))$.
\end{theorem}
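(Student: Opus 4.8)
The plan is to describe both sides of the asserted equality in terms of the Beurling--Lax--Halmos inner function $\Theta$ supplied by Corollary \ref{C:detK}, thereby reducing everything to a pointwise statement about the matrices $\Theta(\lambda)$ and $\Psi(\lambda)$ for $\lambda \in \bD$.

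First I would compute the right-hand side. By Lemma \ref{L:AnnS} we have $\Ann(S^\Psi_1)=\Ann(T_1)$, which is non-zero by hypothesis. Corollary \ref{C:detK}(iii) then applies: the subspace $E$ equals $\bC^d$, so each $\Theta(\lambda)$ is a square matrix, and $\Ann(T_1)$ is the principal ideal generated by $\alpha := \det\Theta$. Since a point of $\bD$ lies in $Z(\Ann(T_1))$ exactly when every element of $\alpha H^\infty(\bD)$ vanishes there, and this happens precisely when $\alpha$ itself vanishes, I obtain
\[
Z(\Ann(T_1)) = \{\lambda \in \bD : \det \Theta(\lambda) = 0\} = \{\lambda \in \bD : \ker \Theta(\lambda)^* \neq 0\},
\]
the last equality because $\Theta(\lambda)$ is square.

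Next I would describe $\Omega_1$. Unwinding the definition together with Lemma \ref{L:eigenvector}, a pair $(\lambda,\mu)$ lies in $\Omega_\Psi$ if and only if $\lambda \in \bD$ and there is a non-zero $\eta \in \bC^d$ with $k_\lambda \otimes \eta \in K_\Psi$ and $\Psi(\lambda)^*\eta = \ol{\mu}\,\eta$. By Corollary \ref{C:detK}(ii), the membership $k_\lambda \otimes \eta \in K_\Psi$ is equivalent to $\Theta(\lambda)^*\eta = 0$. Hence $\lambda \in \Omega_1$ if and only if $\ker\Theta(\lambda)^*$ contains an eigenvector of $\Psi(\lambda)^*$. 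Comparing with the previous paragraph, the inclusion $\Omega_1 \subset Z(\Ann(T_1))$ is immediate, since any such eigenvector witnesses $\ker\Theta(\lambda)^* \neq 0$.

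The crux, and the step I expect to be the main obstacle, is the reverse inclusion, which requires producing a common eigenvector out of the mere non-triviality of $\ker\Theta(\lambda)^*$. Here I would exploit that $K_\Psi$ is co-invariant for $M_\Psi$. Fix $\lambda$ with $\ker\Theta(\lambda)^* \neq 0$ and take any $\eta \in \ker\Theta(\lambda)^*$. Then $k_\lambda \otimes \eta \in K_\Psi$, so $M_\Psi^*(k_\lambda \otimes \eta) = k_\lambda \otimes \Psi(\lambda)^*\eta$ again lies in $K_\Psi$; by Corollary \ref{C:detK}(ii) this forces $\Theta(\lambda)^*\Psi(\lambda)^*\eta = 0$, i.e. $\Psi(\lambda)^*\eta \in \ker\Theta(\lambda)^*$. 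Thus $\ker\Theta(\lambda)^*$ is a non-zero, finite-dimensional, $\Psi(\lambda)^*$-invariant subspace, so $\Psi(\lambda)^*$ restricted to it admits an eigenvector $\eta_0$, with eigenvalue $\ol{\mu}$ say; since $\sigma(\Psi(\lambda)) \subset \bD$ by \eqref{Eq:specPsi}, we have $\mu \in \bD$. Then $k_\lambda \otimes \eta_0$ is a joint eigenvector placing $(\lambda,\mu) \in \Omega_\Psi$, whence $\lambda \in \Omega_1$. Combining the two inclusions yields $\Omega_1 = Z(\Ann(T_1))$.
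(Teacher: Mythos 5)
Your proposal is correct, and while your set-up and the easy inclusion agree in spirit with the paper's (both proofs funnel everything through the Beurling--Lax--Halmos function $\Theta$ of Corollary \ref{C:detK}, and both produce, for $\lambda\in Z(\Ann(T_1))$, a non-zero $\xi$ with $k_\lambda\otimes\xi\in K_\Psi$ from $\det\Theta(\lambda)=0$), your handling of the hard inclusion $Z(\Ann(T_1))\subset\Omega_1$ is genuinely different. The paper, after finding $k_\lambda\otimes\xi\in K_\Psi$, fixes a finite subset $G\subset\Ann(T_1,T_2)$, notes $\omega(\lambda,\Psi(\lambda))^*\xi=0$ for $\omega\in G$, applies the spectral mapping theorem to the commuting matrices $\tilde\omega(\ol{\lambda},\Psi(\lambda)^*)$ to extract $\mu_G\in\sigma(\Psi(\lambda))$ that is a simultaneous zero of all $\omega\in G$ at $\lambda$, then runs a compactness (cofinal subnet) argument over the net $(\mu_G)$ to obtain a single $\mu$ with $(\lambda,\mu)\in Z(\Ann(T_1,T_2))$, and only then converts this to $(\lambda,\mu)\in\Omega_\Psi$ via Theorem \ref{T:ZAnn}. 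You instead exploit the co-invariance of $K_\Psi$ under $M_\Psi$ (asserted in the paper right after $K_\Psi$ is defined) together with Corollary \ref{C:detK}(ii) to show that $\ker\Theta(\lambda)^*$ is a non-zero $\Psi(\lambda)^*$-invariant subspace of $\bC^d$, and then take an eigenvector of the restriction, with $\mu\in\bD$ guaranteed by \eqref{Eq:specPsi}; this manufactures the joint eigenvector, hence the point of $\Omega_\Psi$, in one stroke. Your route is more elementary and self-contained: a single piece of finite-dimensional linear algebra replaces the spectral mapping theorem, the net argument, and the appeal to Theorem \ref{T:ZAnn} (indeed your whole proof bypasses Theorem \ref{T:ZAnn}, since your easy inclusion also goes through $\Theta$ rather than through $Z(\Ann(T_1,T_2))$). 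What the paper's longer route buys is the explicit intermediate fact that such a $\lambda$ always extends to a common zero $(\lambda,\mu)$ of the entire ideal $\Ann(T_1,T_2)$, keeping the argument phrased in the support/zero-set language that organizes the rest of the section; but for the theorem as stated, your argument is a clean simplification, and as a bonus it shows that the witnessing joint eigenvector can be found inside $k_\lambda\otimes\ker\Theta(\lambda)^*$.
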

\begin{proof}
Let $f\in \Ann(T_1)$ and $(\lambda,\mu)\in \Omega_\Psi$. Let $F(z,w)=f(z)$ for $(z,w)\in \bD^2$. Then $F\in \Ann(T_1,T_2)$ so that $f(\lambda)=F(\lambda,\mu)=0$ by Theorem \ref{T:ZAnn}.  This shows that $\Omega_1\subset Z(\Ann(T_1))$.

Conversely, let $\lambda\in Z(\Ann(T_1))$.  By Lemma \ref{L:AnnS}, we see that $\Ann(S^\Psi_1)=\Ann(T_1)$ is non-zero, so Corollary \ref{C:detK} implies that there is an inner function $\Theta\in H^\infty(\bD;B(\bC^d))$ such that $K_\Psi=\ker M_\Theta^*$ and $\det \Theta$ generates $\Ann(T_1)$. Hence $\det\Theta(\lambda)=0$ and there is a non-zero vector $\xi\in \bC^d$ such that $\Theta(\lambda)^*\xi=0$. In particular, $k_\lambda\otimes \xi\in K_\Psi$. 

Next, let $G\subset \Ann(T_1,T_2)$ be a finite subset. By definition, we then have \[K_\Psi\subset \bigcap_{\omega\in G} \ker \omega(M_z\otimes I, M_\Psi)^*\] so
\[
0=\omega(M_z\otimes I, M_\Psi)^* (k_\lambda \otimes \xi)=k_\lambda\otimes \omega(\lambda, \Psi(\lambda))^*\xi
\]
for each $\omega\in G$.
Therefore   $0$ is a joint eigenvalue of the matrices  $$\{\omega(\lambda,\Psi(\lambda))^*:\omega\in G\}.$$
For each $\omega\in G$, define $\tilde\omega\in H^\infty(\bD^2)$ as
\[
\tilde\omega(z,w)=\ol{\omega(\ol{z},\ol{w})}, \quad (z,w)\in \bD^2
\]
Then, it is easily verified that 
\[
\omega(\lambda,\Psi(\lambda))^*=\tilde \omega(\ol{\lambda},\Psi(\lambda)^*), \quad \omega\in G.
\]
By \eqref{Eq:specPsi}, the spectrum of $\Psi(\lambda)^*$ is contained in $\bD$, so each function $\tilde\omega(\ol{\lambda},\cdot)$ is holomorphic in a neighbourhood of the spectrum. Thus we may apply \cite[Theorem 7.7]{muller2007} to  find $\mu_G\in \bD$ such that $\ol{\mu_G}$ is an eigenvalue of $\Psi(\lambda)^*$ and \[
\ol{\omega(\lambda,\mu_G)}=\tilde \omega(\ol{\lambda},\ol{\mu_G})=0
\]
 for each $\omega\in G$.  

Observe finally that the net $(\mu_G)$ lies in the spectrum of $\Psi(\lambda)$, which is compact. Hence, upon passing to a cofinal subnet if necessary, we may assume that $(\mu_G)$ converges to some $\mu\in \bD$ in the spectrum of $\Psi(\lambda)$. By construction, we find that $\omega(\lambda,\mu)=0$ for each $\omega\in \Ann(T_1,T_2)$, or  $(\lambda,\mu)\in Z(\Ann(T_1,T_2))$, so we conclude that $\lambda\in \Omega_1$ by Theorem \ref{T:ZAnn}, and the proof is complete.
\end{proof}

\subsection{Boundary behaviour and the support}

Note that by virtue of \cite[Corollary 9.14 and Remark 25.2]{muller2007}, the set $\Omega_\Psi$ is contained in the Taylor spectrum of $(S^\Psi_1,S^\Psi_2)$. In light of Theorem \ref{T:ZAnn}, one may then wonder if there is a relationship between the full Taylor spectrum and $\Ann(T_1,T_2)$. 

To address this issue, we require the following notion, inspired by \cite{CT2023}. Let $F\subset H^\infty(\bD^2)$ be an ideal. The  {\em support} of $F$, denoted by $\supp(F)$, is the subset of $\mathbb{C}^2$ that consists of all points $(\lambda,\mu)$ such that 
\[
1\notin F+(z-\lambda)H^\infty(\bD^2)+(w-\mu)H^\infty(\bD^2).
\]
We require a few basic properties, adapting arguments from  \cite{CT2023}.
	 
	 \begin{lemma}\label{L:supp}
	 	Let $F\subset H^{\infty}(\mathbb{D}^2)$ be an ideal. Then, $\operatorname{supp}(F) \subset \overline{\mathbb{D}^2}$ and 
	 	$\operatorname{supp}(F) \cap \mathbb{D}^2= Z(F)$.
	 \end{lemma}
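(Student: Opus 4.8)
The plan is to analyze, for each $(\lambda,\mu)\in\bC^2$, the ideal
\[
I_{(\lambda,\mu)}=F+(z-\lambda)H^\infty(\bD^2)+(w-\mu)H^\infty(\bD^2),
\]
so that by definition $(\lambda,\mu)\in\supp(F)$ precisely when $I_{(\lambda,\mu)}$ is proper, i.e. $1\notin I_{(\lambda,\mu)}$. The organizing observation, which I would keep central throughout, is that membership questions reduce to evaluation at $(\lambda,\mu)$: when $(\lambda,\mu)\in\bD^2$, every summand $(z-\lambda)h_1$ and $(w-\mu)h_2$ vanishes at $(\lambda,\mu)$, so each element of $I_{(\lambda,\mu)}$ takes there the same value as its $F$-component.

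For the containment $\supp(F)\subset\ol{\bD^2}$, I would argue contrapositively. If $(\lambda,\mu)\notin\ol{\bD^2}$, then $|\lambda|>1$ or $|\mu|>1$, say the former; then $z-\lambda$ is bounded away from $0$ on $\bD$, so $(z-\lambda)^{-1}\in H^\infty(\bD^2)$ and $1=(z-\lambda)\cdot(z-\lambda)^{-1}\in I_{(\lambda,\mu)}$, whence $(\lambda,\mu)\notin\supp(F)$. For the easy half of the second assertion, namely $Z(F)\subset\supp(F)\cap\bD^2$, I would take $(\lambda,\mu)\in Z(F)$ (so automatically in $\bD^2$) and invoke the central observation: every element of $I_{(\lambda,\mu)}$ vanishes at $(\lambda,\mu)$, since its $F$-component does; as the constant $1$ does not vanish there, $I_{(\lambda,\mu)}$ is proper and $(\lambda,\mu)\in\supp(F)$.

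The substance lies in the reverse inclusion $\supp(F)\cap\bD^2\subset Z(F)$, which I would again prove contrapositively. Fix $(\lambda,\mu)\in\bD^2$ with $(\lambda,\mu)\notin Z(F)$, so some $f\in F$ has $f(\lambda,\mu)\neq0$; rescaling, assume $f(\lambda,\mu)=1$. It then suffices to show that the maximal ideal $M_{(\lambda,\mu)}=\{g\in H^\infty(\bD^2):g(\lambda,\mu)=0\}$ coincides with $(z-\lambda)H^\infty(\bD^2)+(w-\mu)H^\infty(\bD^2)$, for then $1-f\in M_{(\lambda,\mu)}$ gives $1\in I_{(\lambda,\mu)}$ and $(\lambda,\mu)\notin\supp(F)$. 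The inclusion $\supseteq$ is immediate, so the crux — and the \emph{main obstacle} — is the \emph{bounded} holomorphic division establishing $M_{(\lambda,\mu)}\subseteq(z-\lambda)H^\infty(\bD^2)+(w-\mu)H^\infty(\bD^2)$.

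Given $g\in M_{(\lambda,\mu)}$, I would use the telescoping decomposition
\[
g(z,w)=\underbrace{\tfrac{g(z,w)-g(\lambda,w)}{z-\lambda}}_{h_1(z,w)}\,(z-\lambda)+\underbrace{\tfrac{g(\lambda,w)-g(\lambda,\mu)}{w-\mu}}_{h_2(w)}\,(w-\mu),
\]
valid since $g(\lambda,\mu)=0$. Both quotients are holomorphic on $\bD^2$ after removing the apparent singularities (via the power series of $g$ in each variable), so the one genuine difficulty is uniform boundedness. I would obtain it by dividing through the Blaschke inner factors $b_\lambda(z)=(z-\lambda)/(1-\ol{\lambda}z)$ and $b_\mu(w)=(w-\mu)/(1-\ol{\mu}w)$ rather than through the linear factors directly: for each fixed $w$ the slice $g(\cdot,w)-g(\lambda,w)$ vanishes at $\lambda$, and dividing an $H^\infty(\bD)$ function by the inner factor $b_\lambda$ at its zero preserves the supremum norm, giving $|h_1(z,w)|\le 2\|g\|_\infty\,|1-\ol{\lambda}z|^{-1}\le 2\|g\|_\infty/(1-|\lambda|)$; the analogous one-variable estimate with $b_\mu$ controls $h_2$. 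Hence $h_1,h_2\in H^\infty(\bD^2)$, completing the argument. I expect this boundedness step — replacing naive division by division through Blaschke factors — to be the only non-routine point, everything else amounting to bookkeeping around evaluation at $(\lambda,\mu)$.
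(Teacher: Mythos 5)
Your proposal is correct and follows essentially the same route as the paper: the same function $1/(z-\lambda)$ handles $\supp(F)\subset\ol{\bD^2}$, evaluation at $(\lambda,\mu)$ gives $Z(F)\subset\supp(F)$, and your telescoping decomposition of $g$ through $g(\lambda,w)$ is the same one-step division trick the paper applies to $f$ through $f(z,\mu)$ (routing it through the maximal ideal $M_{(\lambda,\mu)}$ and $1-f$ is only a cosmetic repackaging). Your Blaschke-factor argument simply spells out the boundedness of the quotients that the paper dismisses as ``straightforward to verify,'' and it is a correct justification.
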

	 \begin{proof}
Let $(\lambda,\mu)\in \mathbb{C}^2$ lie outside $\overline{\mathbb{D}^2}$. Without loss of generality we can assume that $|\lambda|>1$, as the case where $|\mu|>1$ is symmetric. Consider the function $g\in H^\infty(\bD^2)$ defined as
\[
g(z,w)=1/(z-\lambda), \quad (z,w)\in \bD^2.
\]
Then, $1=(z-\lambda)g$ so that $\lambda\notin \supp(F)$ by definition. This shows that $\operatorname{supp}(F) \subset \overline{\mathbb{D}^2}$.

Next, let $(\lambda,\mu)\in \mathbb{D}^2$ lie outside of $Z(F)$. Then, there is $f\in F$ such that $f(\lambda,\mu)\neq 0$. Define $f_1,f_2:\bD^2\to \bC$ as
\[
f_1(z,w)=\frac{f(z,\mu)-f(\lambda,\mu)}{z-\lambda} \qand f_2(z,w)=\frac{f(z,w)-f(z,\mu)}{w-\mu}
\]
for each $z,w\in \bD$. It is straightforward to verify that $f_1,f_2\in H^\infty(\bD^2)$ and that
	 	$$	f(\lambda,\mu)= f-(z-\lambda)f_1-(w-\mu)f_2.$$
	 	Since $f(\lambda,\mu)$ is non-zero, it follows that $(\lambda,\mu)\notin\supp(F)$. Conversely, assume that $(\lambda,\mu)\in \bD^2$  lies outside of  $\supp(F)$. By the definition,  there are two functions $f_1, f_2$ in $H^{\infty}(\mathbb{D}^2)$ such that 
	 	$$	1+ (z-\lambda )f_1+ (w-\mu) f_2\in F	.$$
	 	In particular, this shows that $(\lambda,\mu)\notin Z(F)$.
	 \end{proof}
	 
\begin{lemma}\label{L:suppTaylor}
Let $(T_1,T_2)$ be a pair of commuting contractions on some Hilbert space admitting an $H^\infty(\bD^2)$-functional calculus. Then, $\sigma(T_1,T_2)\subset \supp(\Ann(T_1,T_2))$.
\end{lemma}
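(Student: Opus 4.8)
The plan is to argue by contraposition: I will show that if $(\lambda,\mu)\notin\supp(\Ann(T_1,T_2))$, then $(\lambda,\mu)\notin\sigma(T_1,T_2)$. Unwinding the definition of the support, the hypothesis $(\lambda,\mu)\notin\supp(\Ann(T_1,T_2))$ means precisely that
\[
1\in \Ann(T_1,T_2)+(z-\lambda)H^\infty(\bD^2)+(w-\mu)H^\infty(\bD^2),
\]
so there exist $f\in\Ann(T_1,T_2)$ and $g_1,g_2\in H^\infty(\bD^2)$ with
\[
1=f+(z-\lambda)g_1+(w-\mu)g_2
\]
as an identity in $H^\infty(\bD^2)$.

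The next step is to transport this B\'ezout-type identity into $B(H)$ through the given functional calculus $\rho\colon H^\infty(\bD^2)\to B(H)$. Since $\rho$ is a unital homomorphism with $\rho(z)=T_1$ and $\rho(w)=T_2$, and since $\rho(f)=f(T_1,T_2)=0$ because $f\in\Ann(T_1,T_2)$, applying $\rho$ produces
\[
I=(T_1-\lambda I)B_1+(T_2-\mu I)B_2,\qquad B_i:=g_i(T_1,T_2).
\]
The crucial feature is that $B_1,B_2$ lie in the commutative algebra $\rho(H^\infty(\bD^2))$, so they commute with one another and with both $T_1$ and $T_2$.

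It then remains to deduce from such a \emph{commuting} B\'ezout inverse that $(\lambda,\mu)$ is absent from the Taylor spectrum. Writing $A_1=T_1-\lambda I$ and $A_2=T_2-\mu I$, I would verify directly that the Koszul complex $0\to H\xrightarrow{\delta_2}H\oplus H\xrightarrow{\delta_1}H\to 0$ of $(A_1,A_2)$, with $\delta_2(x)=(-A_2x,A_1x)$ and $\delta_1(x_1,x_2)=A_1x_1+A_2x_2$, is exact. Surjectivity of $\delta_1$ is immediate from $\delta_1(B_1h,B_2h)=h$; injectivity of $\delta_2$ follows since $A_1x=A_2x=0$ forces $x=(A_1B_1+A_2B_2)x=0$ using commutativity; and for the middle homology one checks that any $(x_1,x_2)$ with $A_1x_1+A_2x_2=0$ equals $\delta_2(B_1x_2-B_2x_1)$, again by commuting the $B_i$ past the $A_j$. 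Exactness of this complex is exactly the assertion $(\lambda,\mu)\notin\sigma(T_1,T_2)$, which finishes the contrapositive.

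The main obstacle is this last paragraph: passing from a commuting B\'ezout identity to full exactness of the Koszul complex. This is the only place where commutativity of the $B_i$ with the $T_j$ is genuinely used, and the only place where one must engage with the definition of the Taylor spectrum rather than with the functional calculus alone. If one prefers to avoid the explicit homological computation, the same conclusion can instead be extracted from the corresponding joint-invertibility criterion for the Taylor spectrum in \cite{muller2007}.
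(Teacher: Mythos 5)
Your proof is correct and takes essentially the same route as the paper: the paper also argues by contraposition, extracts $1=\phi+(z-\lambda)f_1+(w-\mu)f_2$ from the definition of the support, applies the $H^\infty(\bD^2)$-functional calculus to obtain $I=(T_1-\lambda I)f_1(T_1,T_2)+(T_2-\mu I)f_2(T_1,T_2)$, and concludes $(\lambda,\mu)\notin\sigma(T_1,T_2)$ by citing \cite[Proposition 25.3]{muller2007}. The only difference is that where the paper invokes that joint-invertibility criterion, you verify exactness of the Koszul complex by hand, and your computation (including the middle-homology step $(x_1,x_2)=\delta_2(B_1x_2-B_2x_1)$, which is where commutativity of the $B_i$ with the $A_j$ is used) is correct.
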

\begin{proof}
Let $(\lambda,\mu)\in \bC^2$ be a point outside $\supp(\Ann(T_1,T_2))$. Then, there are $\phi\in \Ann(T_1,T_2)$ and $f_1,f_2\in H^\infty(\bD^2)$ such that 
\[
1=\phi+(z_1-\lambda)f_1+(w-\mu)f_2.
\]
Applying the functional calculus we find
\[
I=(T_1-\lambda I)f_1(T_1,T_2)+(T_2-\mu I)f_2(T_1,T_2)
\]
so that $(\lambda,\mu)\notin \sigma(T_1,T_2)$ by \cite[Proposition 25.3]{muller2007}.
\end{proof}
	 
	 We can now give a refinement of Theorem \ref{T:ZAnn} which  contains further information about the boundary behaviour of the Taylor spectrum of the constrained isometric co-extension.

	 \begin{theorem}\label{T:supp}
Let $(T_1,T_2)$ be a pair of pure commuting contractions on some Hilbert space, both with finite defect.  Let $(S_\Psi,K_\Psi)$ be the constrained isometric co-extension of $(T_1,T_2)$ corresponding to some rational matrix-valued pure inner function $\Psi\in H^\infty(\bD;B(\bC^d))$. Then, we have 
\[
\sigma(S^\Psi_1,S^\Psi_2)\subset \supp(\Ann(T_1,T_2))\subset \ol{V_\Psi}=\supp(\Ann(M_z\otimes I,M_\Psi))
\]
and
\[
\sigma(S^\Psi_1,S^\Psi_2)\cap \bD^2=\supp(\Ann(T_1,T_2))\cap \bD^2=Z(\Ann(T_1,T_2)).
\]
	 
	 \end{theorem}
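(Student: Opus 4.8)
The plan is to assemble both displays from results already in hand, treating the three inclusions in the first display and the two equalities in the second one at a time. For the leftmost inclusion $\sigma(S^\Psi_1,S^\Psi_2)\subset\supp(\Ann(T_1,T_2))$, I would first observe that $(S^\Psi_1,S^\Psi_2)$ is a pair of commuting contractions carrying the $H^\infty(\bD^2)$-functional calculus of \eqref{Eq:functcalcS}; this is a genuine homomorphism because $K_\Psi$ is co-invariant for $M_z\otimes I$ and $M_\Psi$, so restricting adjoints to the invariant subspace $K_\Psi$ is multiplicative. Lemma \ref{L:suppTaylor} then gives $\sigma(S^\Psi_1,S^\Psi_2)\subset\supp(\Ann(S^\Psi_1,S^\Psi_2))$, and Lemma \ref{L:AnnS} identifies $\Ann(S^\Psi_1,S^\Psi_2)=\Ann(T_1,T_2)$.

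The crux is the inclusion into $\ol{V_\Psi}$. Here I would exploit the polynomial $p$ from Theorem \ref{T:distvarspec}(iii), which satisfies $\ol{V_\Psi}=\{(z,w)\in\ol{\bD}^2:p(z,w)=0\}$ and lies in both $\Ann(T_1,T_2)$ and $\Ann(M_z\otimes I,M_\Psi)$. The essential point is that, unlike a generic $f\in H^\infty(\bD^2)$, the divided differences $p_1,p_2$ produced in the proof of Lemma \ref{L:supp} from $p$ are \emph{polynomials} (by the factor theorem), hence automatically lie in $H^\infty(\bD^2)$ even when $(\lambda,\mu)$ is a \emph{boundary} point. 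Thus, writing $p=(z-\lambda)p_1+(w-\mu)p_2+p(\lambda,\mu)$, if $p(\lambda,\mu)\neq 0$ for some $(\lambda,\mu)\in\ol{\bD^2}$, then dividing by $p(\lambda,\mu)$ shows $1\in\Ann(\cdot)+(z-\lambda)H^\infty(\bD^2)+(w-\mu)H^\infty(\bD^2)$, so $(\lambda,\mu)\notin\supp(\cdot)$. Since $\supp(\cdot)\subset\ol{\bD^2}$ by Lemma \ref{L:supp}, every support point is a zero of $p$ and hence lies in $\ol{V_\Psi}$; this yields simultaneously $\supp(\Ann(T_1,T_2))\subset\ol{V_\Psi}$ and $\supp(\Ann(M_z\otimes I,M_\Psi))\subset\ol{V_\Psi}$. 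For the reverse inclusion $\ol{V_\Psi}\subset\supp(\Ann(M_z\otimes I,M_\Psi))$, I would combine $\ol{V_\Psi}=\sigma(M_z\otimes I,M_\Psi)$ from Theorem \ref{T:distvarspec}(iv) with Lemma \ref{L:suppTaylor} applied to the pair $(M_z\otimes I,M_\Psi)$, establishing the claimed equality.

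For the second display, I would invoke Lemma \ref{L:supp} directly to obtain $\supp(\Ann(T_1,T_2))\cap\bD^2=Z(\Ann(T_1,T_2))$, which is the right-hand equality. Intersecting the first display with $\bD^2$ gives $\sigma(S^\Psi_1,S^\Psi_2)\cap\bD^2\subset Z(\Ann(T_1,T_2))$, while the reverse containment follows from Theorem \ref{T:ZAnn}, which gives $Z(\Ann(T_1,T_2))=\Omega_\Psi$, together with the remark (via \cite[Corollary 9.14 and Remark 25.2]{muller2007}) that $\Omega_\Psi\subset\sigma(S^\Psi_1,S^\Psi_2)$, noting also that $\Omega_\Psi\subset\bD^2$ by construction.

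I expect the main obstacle to be precisely the boundary inclusion $\supp(\cdot)\subset\ol{V_\Psi}$: the support is governed by a Bezout-type (algebraic) condition over all of $\bC^2$, whereas Lemma \ref{L:supp} only controls behaviour inside the open bidisc, and for an arbitrary $H^\infty$ annihilator the divided-difference construction can fail to produce bounded functions at the distinguished boundary. The resolution rests entirely on the availability of a single annihilating \emph{polynomial} $p$ whose zero set recovers $\ol{V_\Psi}$ on $\ol{\bD}^2$ and whose divided differences remain in $H^\infty(\bD^2)$ up to the boundary.
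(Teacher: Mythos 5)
Your proposal is correct and takes essentially the same approach as the paper's proof: the key step in both is to use the annihilating polynomial $p$ from Theorem \ref{T:distvarspec}(iii), whose divided differences are again polynomials and hence bounded up to the boundary, to exclude points of $\ol{\bD}^2\setminus\ol{V_\Psi}$ from both supports, with Lemmas \ref{L:supp}, \ref{L:suppTaylor} and \ref{L:AnnS} supplying the remaining inclusions and Theorem \ref{T:ZAnn} (together with the containment of $\Omega_\Psi$ in the Taylor spectrum) yielding the second display. No gaps to report.
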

	 
	 \begin{proof}
	 By Theorem \ref{T:distvarspec}, there is a polynomial $p\in \Ann(T_1,T_2)\cap \Ann(M_z\otimes I,M_\Psi)$ such that
	 \[
	 \ol{V_\Psi}=\{(z,w)\in \ol{\bD}^2:p(z,w)=0\}.
	 \]
	 Let $(\lambda,\mu)\in \ol{\bD}^2\setminus \ol{V_\Psi}$. Then $p(\lambda,\mu)\neq 0$. Observe that there are polynomials $q_1,q_2$ such that
	 \[
	 p-p(\lambda,\mu)=(z-\lambda)q_1+(w-\mu)q_2
	 \]
so that $(\lambda,\mu)$ lies outside $\supp(\Ann(T_1,T_2))$ and $\supp(\Ann(M_z\otimes I,M_\Psi))$. Since we know that the support always lies in $\ol{\bD}^2$  by Lemma \ref{L:supp}, we conclude that $\supp(\Ann(T_1,T_2))\subset \ol{V_\Psi}$ and $\supp(\Ann(M_z\otimes I,M_\Psi))\subset \ol{V_\Psi}$.

Next, we apply Lemma \ref{L:suppTaylor} to see that
\[
\sigma(S^\Psi_1,S^\Psi_2)\subset \supp(\Ann(S^\Psi_1,S^\Psi_2))\]
and
\[\sigma(M_z\otimes I, M_\Psi)\subset \supp(\Ann(M_z\otimes I,M_\Psi)).
\]
Applying Theorem \ref{T:distvarspec} and Lemma \ref{L:AnnS}, these become
\[
\sigma(S^\Psi_1,S^\Psi_2)\subset \supp(\Ann(T_1,T_2))\]
and
\[\ol{V_\Psi}\subset \supp(\Ann(M_z\otimes I,M_\Psi)).
\]
Hence, we have shown
\[
\sigma(S^\Psi_1,S^\Psi_2)\subset \supp(\Ann(T_1,T_2))\subset \ol{V_\Psi}=\supp(\Ann(M_z\otimes I,M_\Psi))
\]
Finally, it follows from Lemma \ref{L:supp} that $$\supp(\Ann(T_1,T_2))\cap\bD^2=Z(\Ann(T_1,T_2)).$$ In turn, Theorem \ref{T:ZAnn} implies that $$\supp(\Ann(T_1,T_2))\cap\bD^2=\Omega_\Psi$$ which is contained in $\sigma( S^\Psi_1,S^\Psi_2)\cap \bD^2$ as explained at the beginning of this subsection. This completes the proof.
\end{proof}
	
Notably, when combined with Theorem \ref{T:distvarspec}, the previous result implies that for the pair $(M_z\otimes I,M_\Psi)$, the Taylor spectrum coincides with the support of the annihilating ideal.

\subsection{The annihilator as a vanishing ideal}

Motivated by Theorem \ref{T:ZAnn}, another natural question is whether $\Ann(T_1,T_2)$ can be described in terms of functions vanishing on $\Omega_\Psi$. Given a subset $X\in \bD^2$, we let $I(X)\subset H^\infty(\bD^2)$ consist of those functions $f$ vanishing on $X$.

\begin{theorem}\label{T:specsynth}
Let $(T_1,T_2)$ be a pair of pure commuting contractions on some Hilbert space, both with finite defect. Let $(S_\Psi,K_\Psi)$ be the constrained isometric co-extension of $(T_1,T_2)$ corresponding to some rational matrix-valued pure inner function $\Psi\in H^\infty(\bD;B(\bC^d))$. Assume that $\Ann(T_1)$ is non-zero. Then, the following statements are equivalent.
\begin{enumerate}[{\rm (i)}]
\item The joint eigenvectors of $S_1^{\Psi *}$ and $S_2^{\Psi *}$ span a dense subspace in $K_\Psi$.
\item $\Ann(T_1,T_2)=I(Z(\Ann(T_1,T_2))=I(\Omega_\Psi)$.
\item $\Ann(T_1)$ is a radical ideal in $H^\infty(\bD)$.
\item $\Ann(T_1)$ is generated by a Blaschke product with simple roots.
\end{enumerate}
\end{theorem}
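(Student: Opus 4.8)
The plan is to prove the cycle (iv) $\Rightarrow$ (i) $\Rightarrow$ (ii) $\Rightarrow$ (iv) together with the purely function-theoretic equivalence (iii) $\Leftrightarrow$ (iv). Throughout I would exploit the concrete model furnished by Corollary \ref{C:detK}: since $\Ann(T_1)=\Ann(S^\Psi_1)$ is non-zero by Lemma \ref{L:AnnS}, we have $E=\bC^d$, the space $K_\Psi=(\ran M_\Theta)^\perp$ is a vector-valued model space for a square matrix inner function $\Theta\in H^\infty(\bD;B(\bC^d))$, and $\alpha:=\det\Theta$ is an inner function generating $\Ann(T_1)$. I would also record at the outset the intertwining relation coming from the co-invariance of $K_\Psi$ under $M_\Psi$: by the Leech factorization \eqref{Eq:Leech} there is $\Lambda\in H^\infty(\bD;B(\bC^d))$ with $\Psi\Theta=\Theta\Lambda$ on $\bD$.

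For (iii) $\Leftrightarrow$ (iv), I would use that $\Ann(T_1)=\alpha H^\infty(\bD)$ is radical precisely when $\alpha$ is \emph{square-free} as an inner function, i.e. admits no non-constant inner $\beta$ with $\beta^2\mid\alpha$. If $\alpha$ has a repeated Blaschke factor, or any singular inner factor (a singular inner function always has a square root), one writes $\alpha=\beta^2\gamma$ and checks that $\phi=\beta\gamma$ satisfies $\phi^2\in\alpha H^\infty(\bD)$ but $\phi\notin\alpha H^\infty(\bD)$; conversely, if $\alpha$ is a Blaschke product with simple zeros, then $\alpha\mid\phi^n$ forces $\alpha\mid\phi$. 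Thus $\alpha H^\infty(\bD)$ is radical iff $\alpha$ is a Blaschke product with simple zeros, which is exactly (iv).

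The analytic heart is (iv) $\Rightarrow$ (i). Assuming $\alpha=\det\Theta$ is a Blaschke product with simple zeros $\{a_j\}$, each $a_j$ is a simple zero of $\det\Theta$, so $\Theta(a_j)$ has corank exactly one and $\ker\Theta(a_j)^*=\bC\xi_j$ for a unit vector $\xi_j$; by Corollary \ref{C:detK}(ii) the vector $k_{a_j}\otimes\xi_j$ lies in $K_\Psi$ and is an eigenvector of $S^{\Psi*}_1$. The intertwining $\Psi\Theta=\Theta\Lambda$ then gives $\Theta(a_j)^*\Psi(a_j)^*\xi_j=\Lambda(a_j)^*\Theta(a_j)^*\xi_j=0$, so $\Psi(a_j)^*\xi_j\in\ker\Theta(a_j)^*=\bC\xi_j$ and $\xi_j$ is automatically an eigenvector of $\Psi(a_j)^*$; hence $k_{a_j}\otimes\xi_j$ is in fact a joint eigenvector of $(S^{\Psi*}_1,S^{\Psi*}_2)$. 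To see these joint eigenvectors span $K_\Psi$, I would take $g\in K_\Psi$ orthogonal to all of them: then $\langle g(a_j),\xi_j\rangle=0$ gives $g(a_j)\in\ran\Theta(a_j)$, so the adjugate satisfies $\operatorname{adj}(\Theta)(a_j)\,g(a_j)=\det\Theta(a_j)\,v=0$ at every $a_j$ (writing $g(a_j)=\Theta(a_j)v$). Since the $a_j$ are the simple zeros of the Blaschke product $\alpha$, the quotient $\operatorname{adj}(\Theta)\,g/\alpha$ lies in $H^2(\bC^d)$, whence $g=\Theta\big(\operatorname{adj}(\Theta)g/\alpha\big)\in\Theta H^2=K_\Psi^\perp$ and $g=0$. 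I expect this division step to be the main obstacle, as it is where simplicity of the zeros is genuinely used (a multiple zero would leave a generalized eigenvector unreached).

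Finally, (i) $\Rightarrow$ (ii) is a direct density argument: since $Z(\Ann(T_1,T_2))=\Omega_\Psi$ by Theorem \ref{T:ZAnn}, the two vanishing ideals in (ii) coincide and $\Ann(T_1,T_2)\subset I(\Omega_\Psi)$ is automatic; for the reverse, given $f\in I(\Omega_\Psi)$ and a joint eigenvector $k_\lambda\otimes\xi$ with eigenvalue $(\ol\lambda,\ol\mu)$, the functional calculus yields $f(S^\Psi_1,S^\Psi_2)^*(k_\lambda\otimes\xi)=\ol{f(\lambda,\mu)}\,(k_\lambda\otimes\xi)=0$ because $(\lambda,\mu)\in\Omega_\Psi$, so $f(S^\Psi_1,S^\Psi_2)^*$ vanishes on a dense set and $f\in\Ann(S^\Psi_1,S^\Psi_2)=\Ann(T_1,T_2)$. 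For (ii) $\Rightarrow$ (iv) I would argue contrapositively: if $\alpha$ is not a Blaschke product with simple zeros, let $\widetilde B$ be the Blaschke product with simple zeros at the zeros of $\alpha$ and set $f(z,w)=\widetilde B(z)$. Using Theorem \ref{T:ZAnn1} (valid since $\Ann(T_1)\neq 0$), the first coordinates of $\Omega_\Psi$ are exactly $Z(\Ann(T_1))=\{\alpha=0\}$, so $f\in I(\Omega_\Psi)$; but $f\in\Ann(T_1,T_2)$ would force $\alpha\mid\widetilde B$, hence $\alpha=\widetilde B$, contradicting the failure of (iv). Thus $f$ witnesses $\Ann(T_1,T_2)\subsetneq I(\Omega_\Psi)$, closing the cycle.
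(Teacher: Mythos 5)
Your proposal is correct, but it takes a genuinely different route from the paper in the two hardest places. For (iv)$\Rightarrow$(i), the paper argues softly: it applies the Beurling--Lax--Halmos theorem a \emph{second} time, to the closed span $L$ of the joint eigenvectors, uses Proposition \ref{P:equaldefect} to see the resulting inner function $\Gamma$ is square, invokes Theorem \ref{T:ZAnn1} to produce an eigenvector in $L$ above each root of $\det\Theta$, and then closes with a determinant comparison ($\det\Gamma=(\det\Theta)\phi$ from simplicity of the roots, and $\det\Theta=(\det\Gamma)(\det\Delta)$ from Leech), concluding that $\det\Delta$ is invertible and hence $L=K_\Psi$. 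You instead build the eigenvectors explicitly: a simple zero $a_j$ of $\det\Theta$ forces $\operatorname{adj}(\Theta)(a_j)\neq 0$ and hence corank one, and your observation that the intertwining $\Psi\Theta=\Theta\Lambda$ makes $\xi_j$ \emph{automatically} an eigenvector of $\Psi(a_j)^*$ is a nice point the paper never needs (it extracts jointness from Theorem \ref{T:ZAnn1} instead, at the cost of the net argument buried in that theorem's proof). Your completeness step via the adjugate identity $g=\Theta\bigl(\operatorname{adj}(\Theta)g/\alpha\bigr)$ is a valid replacement for the paper's determinant comparison and has the virtue of pinpointing exactly where simplicity of the zeros enters, namely the division of $\operatorname{adj}(\Theta)g$ by the Blaschke product $\alpha$; both that division and the corank-one claim are standard, though in a final write-up you should justify the latter (e.g.\ via $(\det\Theta)'=\tr(\operatorname{adj}(\Theta)\Theta')$). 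Second, you close the cycle with a direct (ii)$\Rightarrow$(iv) using the simple-zero Blaschke product $\widetilde B$ and Theorem \ref{T:ZAnn1}, whereas the paper takes the cheaper step (ii)$\Rightarrow$(iii) and then disposes of (iii)$\Leftrightarrow$(iv) by citing \cite{bercovici1988} together with \cite[Theorem 4.2]{clouatre2015CB}; your self-contained square-free argument (singular inner functions admit square roots; a repeated factor $\beta^2\gamma$ yields $\phi=\beta\gamma$ with $\phi^2\in\alpha H^\infty(\bD)$ but $\phi\notin\alpha H^\infty(\bD)$) correctly replaces that citation. In sum, the paper's route is shorter and reuses its machinery, while yours is more constructive, producing an explicit spanning family of joint eigenvectors $\{k_{a_j}\otimes\xi_j\}$ indexed by the zeros of $\det\Theta$ and avoiding the second appeal to Beurling--Lax--Halmos; your (i)$\Rightarrow$(ii) is essentially the paper's argument, streamlined by going directly through the definition of $\Omega_\Psi$ rather than through $Z(\Ann(T_1,T_2))$.
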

\begin{proof}
(i)$\Rightarrow$ (ii): By Theorem \ref{T:ZAnn}, we have $Z(\Ann(T_1,T_2))=\Omega_\Psi$ so certainly \[I(Z(\Ann(T_1,T_2))=I(\Omega_\Psi).\]
Plainly, it always holds that \[\Ann(T_1,T_2)\subset I(Z(\Ann(T_1,T_2)).
\]
Let $f\in I(\Omega_\Psi)$. We claim that $f\in \Ann(T_1,T_2)$. By Lemma \ref{L:AnnS} and our assumption, to establish this claim it suffices to fix a joint eigenvector  $h\in K_\Psi$ for $S_1^{\Psi *}$ and $S_2^{\Psi *}$, and to show that $$f(S^\Psi_1,S^\Psi_2)^* h=0.$$ Now, $h$ must be an eigenvector for $(M_z\otimes I)^*$, so Lemma \ref{L:eigenvector} implies there are $\lambda\in \bD$ and a non-zero $\xi\in \bC^d$ such that $h=k_\lambda\otimes \xi$. Since $h$ is also an eigenvector for $M_\Psi^*$, there is $\mu$ in the spectrum of $\Psi(\lambda)^*$ such that $\Psi(\lambda)^* \xi=\ol{\mu}\xi$.  We see that $\mu\in \bD$ by \eqref{Eq:specPsi}.

Next, the fact that $h\in K_\Psi$ means, by definition of $K_\Psi$, that for $g\in \Ann(T_1,T_2)$ we have
\[
0=g(M_z\otimes I, M_\Psi)^*(k_\lambda\otimes \xi)=\ol{g(\lambda,\mu)}(k_\lambda\otimes \xi)
\]
so that $(\lambda,\mu)\in Z(\Ann(T_1,T_2))$. By Theorem \ref{T:ZAnn}, we infer $(\lambda,\mu)\in \Omega_\Psi$ so $f(\lambda,\mu)=0$ by choice of $f$. This means that
\[
f(S^\Psi_1,S^\Psi_2)^* h=f(M_z\otimes I,M_\Psi)^* h=\ol{f(\lambda,\mu)}h=0
\]
and the claim follows.

(ii)$\Rightarrow$ (iii): This is elementary. Let $f\in H^\infty(\bD)$ with the property that $f^n\in \Ann(T_1)$ for some $n\geq 1$. Define $F(z,w)=f(z)$ for each $(z,w)\in \bD^2$. Then, $$F^n\in \Ann(T_1,T_2)=I(\Omega_\Psi).$$ This shows that $F^n$ vanishes on $\Omega_\Psi$, and thus so does $F$. Hence, $F\in I(\Omega_\Psi)=\Ann(T_1,T_2)$, so indeed $f\in \Ann(T_1)$. 

(iii)$\Leftrightarrow$ (iv): The ideal $\Ann(T_1)$ is closed in the weak-$*$ topology of $H^\infty(\bD)$, so there is an inner function $\omega\in H^\infty(\bD)$ with $\Ann(T_1)=\omega H^\infty(\bD)$ \cite{bercovici1988}. The desired statement then follows from \cite[Theorem 4.2]{clouatre2015CB}.

(iv)$\Rightarrow$(i): Let $L\subset K_\Psi$ denote the closed subspace spanned by the joint eigenvectors of $S_1^{\Psi *}$ and $S_2^{\Psi *}$. Then, $L$ is co-invariant for $M_z\otimes I$, so by the Beurling--Lax--Halmos theorem there is a subspace $E\subset \bC^d$ along with an inner function $\Gamma\in H^\infty(\bD;B(E,\bC^d))$ such that $L=(\ran M_\Gamma)^\perp$. By assumption, there is a non-zero $f\in \Ann(T_1)$. Then $f\in \Ann(S^\Psi_1)$ by Lemma \ref{L:AnnS} so in particular $f$ annihilates 
$$P_L S_1^\Psi|_L=P_L (M_z\otimes I)|_L.$$ Applying Proposition \ref{P:equaldefect}, we see that $E=\bC^d$.

Next, we see from Corollary \ref{C:detK} that there is an inner function $\Theta\in H^\infty(\bD;B(\bC^d))$ such that $K_\Psi=(\ran M_\Theta)^\perp$ and $\det \Theta$ generates $\Ann(S^\Psi_1)$. By Lemma \ref{L:AnnS}, it follows that $\det \Theta\in \Ann(T_1)$.  Let $\lambda$ be a root of $\det \Theta$. Then, $\lambda\in Z(\Ann (T_1))$, so we may apply Theorem \ref{T:ZAnn1}  to infer that there is $\mu\in \bD$ with the property that $(\lambda,\mu)\in \Omega_\Psi$. In particular, there is $h\in L=\ker M_\Gamma^*$ such that $(M_z\otimes I)^* h=\ol{\lambda} h.$ We conclude from Lemma \ref{L:eigenvector} that there is a non-zero $\xi\in \bC^d$ such that $h=k_\lambda\otimes \xi$. Then,
\[
0=M_\Gamma^* h=k_\lambda\otimes \Gamma(\lambda)^*\xi
\]
so that $\Gamma(\lambda)$ is not invertible and $\det\Gamma(\lambda)=0$. We conclude that $\det \Gamma$ vanishes at all the roots of the simple Blaschke product $\det \Theta$, so we must have $\det\Gamma=( \det \Theta)\phi$ for some $\phi\in H^\infty(\bD)$. 

Conversely, the inclusion $L\subset K_\Psi$ implies $\ran M_\Theta\subset \ran M_\Gamma$, so by \eqref{Eq:Leech} there is $\Delta\in H^\infty(\bD;B(\bC^d))$ such that $\Theta=\Gamma \Delta$. In particular, $\det \Theta =(\det \Gamma)(\det \Delta)$. Combined with the conclusion of the previous paragraph, we infer that $\det\Delta$ is invertible in $H^\infty(\bD)$, so that $ M_\Delta$ is invertible by the adjoint formula for the inverse of a matrix. Thus, $$L^\perp=\ran M_\Gamma=\ran M_\Theta=K_\Psi^\perp$$ and therefore $L=K_\Psi$.
\end{proof}

\section*{Acknowledgments}
The second author would like to thank Bata Krishna Das and Haripada Sau for helpful discussions.

\bibliography{AMspecvar}  
	
\end{document}